\newcommand{\dotspace}{\hspace{-0.25em}}
\newcommand{\blackdot}{     \raisebox{-0.3ex}{\includegraphics[height=1em]{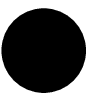}}}
\newcommand{\greydot}{      \raisebox{-0.3ex}{\includegraphics[height=1em]{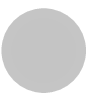}}}
\newcommand{\darkgreydot}{  \raisebox{-0.3ex}{\includegraphics[height=1em]{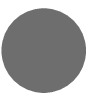}}}
\newcommand{\backgrounddot}{\raisebox{-0.3ex}{\includegraphics[height=1em]{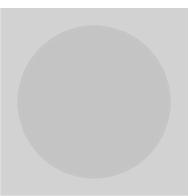}}}
\newcommand{\rimdot}{       \raisebox{-0.3ex}{\includegraphics[height=1em]{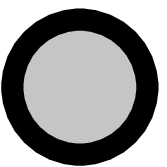}}}
\newcommand{\boxeddot}{     \raisebox{-0.3ex}{\includegraphics[height=1em]{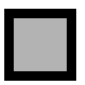}}}
\newcommand{\greyboxdot}{   \raisebox{-0.3ex}{\includegraphics[height=1em]{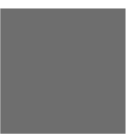}}}
\theoremstyle{plain}
\newtheorem{theorem}{Theorem}[section]
\newtheorem{lemma}[theorem]{Lemma}
\newtheorem{corollary}[theorem]{Corollary}
\newtheorem{proposition}[theorem]{Proposition}
\newtheorem{thmdefn}[theorem]{Theorem-Definition}
\newtheorem{introtheorem}{Theorem}
\theoremstyle{definition}
\newtheorem{remark}[theorem]{Remark}
\newtheorem{example}[theorem]{Example}
\newtheorem*{naive-algorithm}{Na\"ive algorithm}
\newtheorem*{refined-algorithm}{Refined algorithm}
\newtheorem*{definition}{Definition}
\newtheorem{setup}[theorem]{Setup}
\newcommand{\Db}{\sD^b}
\newcommand{\IC}{{\mathbb{C}}}
\newcommand{\IP}{{\mathbb{P}}}
\newcommand{\IZ}{{\mathbb{Z}}}
\newcommand{\IN}{{\mathbb{N}}}
\newcommand{\IX}{{\mathbb{X}}}
\newcommand{\tuberk}{\varrho}
\newcommand{\kk}{{\mathbf{k}}}
\DeclareMathOperator{\coker}{\mathsf{coker}}
\DeclareMathOperator{\kernel}{\mathsf{ker}}
\DeclareMathOperator{\Cone}{\mathsf{cone}}
\DeclareMathOperator{\Susp}{\mathsf{Susp}}
\DeclareMathOperator{\Hom}{\mathsf{Hom}}
\DeclareMathOperator{\Aut}{\mathsf{Aut}}
\DeclareMathOperator{\Ext}{\mathsf{Ext}}
\DeclareMathOperator{\coh}{\mathsf{coh}}
\DeclareMathOperator{\ind}{\mathsf{ind}\,}
\renewcommand{\mod}[1]{\mathsf{mod}(#1)}
\DeclareMathOperator{\add}{\mathsf{add}}
\newcommand{\len}[1]{\ell(#1)}
\newcommand{\lCeil}{\Big\lceil}
\newcommand{\rCeil}{\Big\rceil}
\newcommand{\card}[1]{\lvert #1\rvert}
\newcommand{\ceil}[1]{\lceil #1\rceil}
\newcommand{\floor}[1]{\lfloor #1\rfloor}
\renewcommand{\setminus}{\smallsetminus}
\newcommand{\sC}{\mathsf{C}}
\newcommand{\sD}{\mathsf{D}}
\newcommand{\sH}{\mathsf{H}}
\newcommand{\sR}{\mathsf{R}}
\newcommand{\sT}{\mathsf{T}}
\newcommand{\sW}{\mathsf{W}}
\newcommand{\sX}{\mathsf{X}}
\newcommand{\sY}{\mathsf{Y}}
\DeclareMathAlphabet{\mathpzc}{OT1}{pzc}{m}{it}
\newcommand{\cC}{\mathscr{C}}
\newcommand{\cI}{\mathscr{I}}
\newcommand{\cN}{\mathscr{N}}
\newcommand{\cP}{\mathscr{P}}
\renewcommand{\cR}{\mathscr{R}}
\newcommand{\cT}{\mathscr{T}}
\newcommand{\naiveX}[2]{\mathsf{N}\mathsf{X}(#1)_{#2}}    
\newcommand{\naiveY}[2]{\mathsf{N}\mathsf{Y}(#1)_{#2}}    
\newcommand{\refinedX}[2]{\mathsf{R}\mathsf{X}(#1)_{#2}}  
\newcommand{\refinedY}[2]{\mathsf{R}\mathsf{Y}(#1)_{#2}}  
\newcommand{\waste}[2]{\mathsf{W}(#1)_{#2}}               
\newcommand{\sod}[1]{{\langle #1\rangle}}  
\newcommand{\isom}{ \text{{\hspace{0.48em}\raisebox{0.8ex}{${\scriptscriptstyle\sim}$}}}
                    \hspace{-0.65em}{\rightarrow}\hspace{0.3em}} 
\newcommand{\arrd}{ \ar@{-}[r] \ar@{=}[d] }
\newcommand{\too}{\longrightarrow}
\newcommand{\rightlabel}[1]{\stackrel{#1}{\longrightarrow}}
\newcommand{\rightiso}{\rightlabel{\sim}}
\newcommand{\xxrightarrow}[1]{\xrightarrow{\raisebox{-0.1ex}{\ensuremath{{\scriptscriptstyle #1}}}}} 
\newcommand{\tri}[3]{#1\rightarrow #2\rightarrow #3\rightarrow \Sigma #1}
\newcommand{\hocolim}{\underrightarrow{\mathsf{holim}}\,}
\newcommand{\colim}{\underrightarrow{\mathsf{lim}}\,}
\newcommand{\bib}[6]{{\bibitem[#1]{#2} #3: {\emph{#4},} #5\texttt{#6}.}}
\begin{document}

\title[Averaging t-structures]{Averaging t-structures and extension closure of aisles}

\author{Nathan Broomhead}

\author{David Pauksztello}

\author{David Ploog}

\begin{abstract}
We ask when a finite set of t-structures in a triangulated category can be `averaged' into one t-structure or, equivalently, when the extension closure of a finite set of aisles is again an aisle. 
There is a straightforward, positive answer for a finite set of compactly generated t-structures in a big triangulated category. For piecewise tame hereditary categories, we give a criterion for when averaging is possible, and an algorithm that computes truncation triangles in this case. A finite group action on a triangulated category gives a natural way of producing a finite set of t-structures out of a given one. If averaging is possible, there is an induced t-structure on the equivariant triangulated category.
\end{abstract}

\maketitle

{\small
\setcounter{tocdepth}{1}
\tableofcontents
}

\addtocontents{toc}{\protect{\setcounter{tocdepth}{-1}}}  
\section*{Introduction} 
\addtocontents{toc}{\protect{\setcounter{tocdepth}{1}}}   

\noindent
The notion of a t-structure was introduced by Beilinson, Bernstein and Deligne in the seminal paper \cite{BBD} in order to construct perverse sheaves. A t-structure in a triangulated category $\sT$ is a pair of full subcategories $(\sX,\sY)$ satisfying certain orthogonality and generating conditions. In terminology due to Keller and Vossieck in \cite{Keller-Vossieck}, the full subcategory $\sX$ is called the `aisle' of the t-structure and $\sY$ is called the `co-aisle'.

T-structures are highly useful, and have since become a major tool in understanding the structure of triangulated categories. They can be thought of as the triangulated analogue of torsion pairs introduced in the abelian setting by Dickson in \cite{Dickson}, which abstracts the well-known decomposition of abelian groups into torsion and torsion-free abelian groups. The main applications of t-structures are the recovery of abelian categories from triangulated categories and the construction of (co)homological functors. 

Our motivation for the construction presented in this paper is the following: One can consider abstract group actions on triangulated categories, e.g.\ actions of a finite group on the derived category of an algebra or variety which does not necessarily come from an action on the algebra or variety itself. Inspired by Sosna \cite{Sosna}, we ask ourselves if such actions are always `abelian' in the sense that there is an invariant t-structure and hence an invariant heart. A positive answer will lead to a t-structure on the equivariant category.

This is a special situation of the following: Let $(\sX_i,\sY_i)$ be t-structures in a triangulated category $\sT$ indexed by a finite set $I$. When can this set of t-structures be `averaged' into a t-structure in $\sT$? Natural candidates for `averaged' t-structures are given by either taking the extension closure of the aisles, or the intersection of the aisles and seeing whether this is an aisle. Thus, we have the following natural questions:
\begin{itemize}
\item Is the extension closure of a finite set of aisles again an aisle?
\item Is the intersection of a finite set of aisles again an aisle?
\end{itemize}
Unfortunately, neither of the above questions have a positive answer in general. Thus, a further natural question is to describe exactly when this happens. As an aside, positive answers to the corresponding questions for co-aisles do not yield t-structures; see Example~\ref{ex:no-t-structure}.

In each case, it is easy to see that the required closure under (de)suspension and semi-orthogonality are always satisfied. The crux of the problem is to verify the generating condition, that is, to show the existence of truncation triangles.

For `big' categories, the answer is rather straightforward:

\begin{introtheorem} \label{thm:big}
Let $\sT$ be a triangulated category with set-indexed coproducts and $(\sX_i,\sY_i)_{i\in I}$ a finite set of compactly generated t-structures in $\sT$. Then:
\begin{itemize}
\item[(a)] The extension closure of the aisles (resp.\ co-aisles) is an aisle (resp.\ co-aisle);
\item[(b)] The intersection of the aisles (resp.\ co-aisles) is an aisle (resp.\ co-aisle).
\end{itemize}
\end{introtheorem}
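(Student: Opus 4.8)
The plan is to reduce everything to the well-understood machinery of compactly generated t-structures, which are governed by their sets of compact generators. Recall that a compactly generated t-structure $(\sX,\sY)$ in $\sT$ is determined by a set $\sS$ of compact objects via $\sY = \{Y : \Hom(\Sigma^n S, Y) = 0 \text{ for all } S\in\sS,\ n\geq 0\}$, with $\sX = {}^\perp\sY$; one writes $\sX = \Susp(\sS)$ for the aisle, the smallest aisle containing $\sS$ and closed under set-indexed coproducts. So for each $i\in I$ fix a set $\sS_i$ of compact objects with $\sX_i = \Susp(\sS_i)$.

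For part (a), consider $\sS := \bigcup_{i\in I}\sS_i$, a set of compact objects, and let $(\sX,\sY)$ be the compactly generated t-structure it generates. I claim $\sX = \Susp(\sS)$ is precisely the extension closure of $\bigcup_i \sX_i$ (closed under coproducts). One inclusion is immediate: each $\sX_i = \Susp(\sS_i)\subseteq\Susp(\sS) = \sX$, and $\sX$ is extension-closed and coproduct-closed, so it contains the extension closure of $\bigcup_i\sX_i$. For the reverse inclusion, note that the extension closure $\cE$ of $\bigcup_i\sX_i$ already contains each $\sS_i$, hence contains $\sS$; and the point is that $\cE$ is already an aisle — the required suspension-closure and semi-orthogonality are formal (as the introduction notes), and the truncation triangles come for free because $\cE \supseteq \sX$ and $\cE\subseteq\sX$ forces equality. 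Concretely: given $T\in\sT$, take its truncation triangle $\tri{\sX T}{T}{\sX^\perp T}$ with respect to $(\sX,\sY)$; since $\sX T\in\sX\subseteq\cE$ and $\sX^\perp T\in\sY\subseteq\cE^\perp$ (because $\cE\supseteq\bigcup\sX_i\ni$ the generators, so $\cE^\perp\subseteq\bigcap\sY_i$... wait, one needs $\sY\subseteq\cE^\perp$, which holds since $\cE\subseteq\sX$ implies $\cE^\perp\supseteq\sX^\perp=\sY$, giving the wrong direction) — so instead argue directly that $\cE=\sX$ as full subcategories, whence $\cE$ inherits the t-structure. The cleanest route: show $\cE\subseteq\sX$ and ${}^\perp(\cE^\perp)\subseteq\cE$ is automatic for an extension-and-coproduct-closed, suspension-closed subcategory, so it suffices to check $\cE^\perp=\sY$; and $\cE^\perp = \bigcap_i \sX_i^\perp$... no: $\cE^\perp$ consists of objects right-orthogonal to everything in $\cE\supseteq\bigcup_i\sX_i$, so $\cE^\perp = \bigcap_i\sX_i^\perp=\bigcap_i\sY_i = \sS^\perp[{\geq}0]=\sY$. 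Hence $\cE$ and $\sX$ have the same right-orthogonal, and since both are aisles (once we know $\cE$ is) with the same co-aisle, $\cE=\sX$. The honest logical order is therefore: (1) $\sX=\Susp(\sS)$ is an aisle with co-aisle $\sY=\bigcap_i\sY_i$; (2) the extension-closure-plus-coproducts $\cE$ of $\bigcup_i\sX_i$ satisfies $\sX_i\subseteq\cE\subseteq\sX$ for all $i$; (3) $\cE^\perp\subseteq\bigcap_i\sX_i^\perp=\sY=\sX^\perp\subseteq\cE^\perp$, so $\cE^\perp=\sY$; (4) for any $T$, its $(\sX,\sY)$-truncation triangle has outer terms in $\cE$ and $\cE^\perp$ respectively, exhibiting it as a truncation triangle for $\cE$, so $\cE$ is an aisle — and then necessarily $\cE=\sX$.

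For part (b), the intersection $\bigcap_i\sX_i$ dualizes the co-aisle story: its right-orthogonal, however, is not simply controlled, so instead I would work on the co-aisle side. The co-aisle $\sY_i$ of a compactly generated t-structure is a coaisle of a compactly generated t-structure on the opposite... more usefully, observe that $\bigcap_i\sX_i$ is the aisle we want and it equals $\Susp(\bigcup_i \text{compact objects of } \sX_i)$? That is false in general. The right statement: the co-aisles $\sY_i = \sS_i^\perp[{\geq}0]$ (the negative part), and the \emph{sum} of aisles is handled in (a); for the intersection, pass to co-aisles and use that $\bigcap_i\sX_i = {}^\perp\!\big(\text{extension closure of }\bigcup_i\sY_i\big)$ is the aisle of the t-structure whose co-aisle is the extension-and-product-closure of $\bigcup_i\sY_i$ — and this co-aisle, being generated (on the co-aisle side) by the $\sY_i$, is itself the co-aisle of a compactly generated t-structure because the relevant smallness is preserved. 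Concretely one invokes the coproduct-cocompleteness of $\sT$ together with Brown representability (which holds for compactly generated $\sT$, and each $\sX_i$ compactly generated makes $\sT$ compactly generated) to produce the truncation functor: the inclusion $\bigcap_i\sX_i\hookrightarrow\sT$ has a right adjoint by Brown representability since $\bigcap_i\sX_i$ is a coproduct-closed, suspension-closed, well-generated subcategory, and existence of that right adjoint is exactly existence of truncation triangles.

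I expect the main obstacle to be (b): establishing that $\bigcap_i\sX_i$ is still \emph{well-generated} (or compactly generated) so that Brown representability applies and yields the truncation functor. For (a) the work is essentially bookkeeping with orthogonals once the generating set $\sS=\bigcup_i\sS_i$ is written down; the only subtlety is the direction of inclusions among $\cE$, $\sX$ and their orthogonal complements, which the four-step scheme above pins down. For (b) one cannot just take a union of generators — the intersection of aisles is cut out by \emph{more} conditions, not fewer — so the correct move is to realize $\bigcap_i\sX_i$ as the left-orthogonal of the extension/product closure of $\bigcup_i\sY_i$, verify this subcategory is suspension-closed and closed under coproducts, and then deduce the existence of the adjoint (hence truncation triangles) from Brown representability for the compactly generated category $\sT$. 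Dually to (a), one also checks directly that the $(\sX_i,\sY_i)$-truncations can be spliced: given $T$, truncate successively with respect to $\sX_1,\dots,\sX_{|I|}$ and use an octahedral-axiom argument to show the iterated truncation lands in $\bigcap_i\sX_i$ with cofibre in the extension closure of $\bigcup_i\sY_i$; the compact generation is what guarantees this process terminates/converges appropriately. The co-aisle statements in both (a) and (b) are then obtained by the evident dual arguments in $\sT^{\mathrm{op}}$, noting that a compactly generated t-structure need not dualize to one, so here too the convergence must be checked rather than quoted — this is the second place I would be careful.
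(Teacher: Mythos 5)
Your argument for the pair the paper actually constructs --- the extension closure of the aisles together with the intersection of the co-aisles, i.e.\ $(\sX^I,\sY^I)$ --- is correct, but it takes a genuinely different route. The paper never takes a union of generating sets: it runs the na\"ive truncation algorithm, truncating $t$ cyclically by the $(\sX_n,\sY_n)$ to get a tower of triangles $\naiveX{t}{n}\to t\to\naiveY{t}{n}$, shows $\hocolim\naiveY{t}{n}\in\sY^I=\bigcap_i\sY_i$ because co-aisles of compactly generated t-structures are closed under homotopy colimits (Lemma~\ref{lem:colimit-closure}, via Neeman's compactness lemma), and then uses a $3\times3$ diagram to see that the cocone of $t\to\hocolim\naiveY{t}{n}$ lies in $\sX^I$. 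You instead set $\sS=\bigcup_i\sS_i$, invoke Theorem-Definition~\ref{thm:compact-induced} once to obtain the compactly generated t-structure with aisle $\Susp(\sS)$, and identify $\Susp(\sS)$ with the extension-plus-coproduct closure $\cE$ of $\bigcup_i\sX_i$ and its co-aisle with $\bigcap_i\sY_i$. Both identifications are sound: $\cE\subseteq\Susp(\sS)$ because an aisle is closed under extensions, coproducts, summands and suspension, and $\Susp(\sS)\subseteq\cE$ by minimality, since $\cE$ contains $\sS$ and has exactly these closure properties. (Your step (4) as literally written needs this second inclusion to place the left-hand truncation term in $\cE$, and your intermediate text wavers on the direction of the orthogonality inclusions, but the ``honest logical order'' you give does close the argument.) Your route is shorter and cleaner; what the paper's route buys is an explicit procedure for computing truncation triangles, which is precisely what gets refined in Section~\ref{sec:refined_algorithm} for small categories --- the algorithmic detour is the point, not an inefficiency. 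Note also that both you and the paper read ``extension closure'' in the big setting as including closure under set-indexed coproducts (the paper asserts this for $\sX^I$ in its $3\times3$ step).

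Where your proposal is weaker is the other half of the statement, the pair $(\sX_I,\sY_I)$: intersection of aisles and extension closure of co-aisles. Your suggestions there (Brown representability for the inclusion of $\bigcap_i\sX_i$, with an unverified well-generation hypothesis, or a spliced-truncation argument whose convergence you admit you cannot quote) are not carried out, and you correctly flag the real obstruction: compact generation does not dualize, and the homotopy-colimit argument has no automatic mirror since homotopy limits would require products. In fairness, the paper's displayed proof also treats only $(\sX^I,\sY^I)$ and relegates the remaining clauses to the duality remark of Section~\ref{sec:prelim}, so on this point your proposal matches the paper's actual content rather than falling short of it.
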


The proof of Theorem~\ref{thm:big} makes use of the following na\"ive algorithm (see Section~\ref{sec:compact}):
Write $I=\{0,\ldots,d-1\}$ by choosing an auxiliary order on $I$. Given an object $t\in\sT$, we define two sequence of objects, $\naiveX{t}{n}$ and $\naiveY{t}{n}$ which fit into distinguished triangles 
 $\naiveX{t}{n} \to t \to \naiveY{t}{n} \to \Sigma\naiveX{t}{n}$ with $\naiveX{t}{n}\in\sX^I$, the extension closure of the $\sX_i$.
For $n=0$, this is just the truncation triangle of $t$ with respect to the t-structure $(\sX_0,\sY_0)$. For the recursion, define $\naiveY{t}{n}$ to be the left truncation of $\naiveY{t}{n-1}$ with respect to $(\sX_n,\sY_n)$ and then define $\naiveX{t}{n}$ using the above triangle.
We say that the algorithm \emph{converges} if the homotopy colimit of the sequence $\naiveY{t}{n}$ is in $\sY^I$, the intersection of the $\sY_i$.
Convergence is a straightforward application of machinery from \cite{AJS} or \cite{BR}.

For small triangulated categories, many of which are of interest in algebraic geometry and representation theory, matters are more subtle. Indeed, there are cases where the extension closure of aisles is not again an aisle; see Example~\ref{ex:no-t-structure}, and also examples where, even when the extension closure of aisles is an aisle, the na\"ive algorithm suggested by Theorem~\ref{thm:big} fails to compute the truncation triangles.
In these small triangulated categories, the absence of homotopy colimits means that instead of asking for convergent sequences, we have to ask for eventually constant ones (`termination');
in Section~\ref{sec:refined_algorithm} we refine the na\"ive algorithm of Section~\ref{sec:compact} by stripping off some irrelevant direct summands which can prevent the algorithm from terminating. In this section we prove the following main result:

\begin{introtheorem} \label{thm:refined}
Let $\sT$ be a $\kk$-linear, Krull-Schmidt, triangulated category and suppose that $(\sX_i,\sY_i)_{i\in I}$ is a finite set of t-structures in $\sT$. If the refined truncation algorithm of Section~\ref{sec:refined_algorithm} terminates then the extension closure $\sod{\sX_i \mid i\in I}$ of the aisles $\sX_i$ is an aisle.
\end{introtheorem}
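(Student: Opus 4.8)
The plan is to verify that $\sU := \sod{\sX_i \mid i\in I}$, the extension closure of the aisles, satisfies the three defining properties of an aisle in the sense of Keller--Vossieck: $\Sigma\sU\subseteq\sU$, closure under extensions, and, for every $t\in\sT$, the existence of a truncation triangle $x\to t\to y\to\Sigma x$ with $x\in\sU$ and $y\in\sU^\perp$. Write also $\sY^I := \bigcap_{i\in I}\sY_i$ for the intersection of the co-aisles. The first two properties are formal: $\sU$ is by construction an extension closure, hence extension-closed, and since $\Sigma\sX_i\subseteq\sX_i$ for every $i$, applying $\Sigma$ to an iterated extension of objects of the $\sX_i$ again produces one, so $\Sigma\sU\subseteq\sU$. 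Semi-orthogonality is nearly as easy: from $\sX_i\perp\sY_i$ and $\sY^I\subseteq\sY_i$ one gets $\sX_i\perp\sY^I$ for every $i$, and the class $\{z\in\sT \mid \Hom(z,Y)=0\text{ for all }Y\in\sY^I\}$ is closed under extensions, so it contains $\sU$; thus $\sU\perp\sY^I$, i.e.\ $\sY^I\subseteq\sU^\perp$. It therefore suffices to produce the truncation triangles with $y\in\sY^I$; this will moreover force $\sU^\perp=\sY^I$ (since $\sY^I$, an intersection of co-aisles, is closed under direct summands, so the usual splitting argument applies), so that in fact $(\sU,\sY^I)$ is a t-structure.

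The truncation triangles are exactly what the refined algorithm of Section~\ref{sec:refined_algorithm} is built to compute, and from it I would extract three features, all established there. Recall its shape: one fixes a periodic schedule $i(0),i(1),i(2),\dots$ of indices that is surjective onto $I$, and builds from $t$ a sequence of triangles $\refinedX{t}{n}\to t\to\refinedY{t}{n}\to\Sigma\refinedX{t}{n}$; the $n$-th step forms the $(\sX_{i(n)},\sY_{i(n)})$-truncation of the current co-aisle term and then strips off, into the aisle term, a maximal direct summand lying in $\sU$. The three features are: (F1) $\refinedX{t}{n}\in\sU$ for all $n$, proved by rebuilding the aisle term with two applications of the octahedral axiom from objects of the $\sX_i$ and the stripped summands, using that $\sU$ is extension-closed; (F2) $\refinedY{t}{n}$ is a direct summand of an object of $\sY_{i(n)}$, hence $\refinedY{t}{n}\in\sY_{i(n)}$; and (F3), the hypothesis of the theorem, that the sequence $(\refinedX{t}{n},\refinedY{t}{n})_n$ is eventually constant up to isomorphism.

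Granting these, the proof concludes in a line. Fix $t$ and let $x\to t\to y\to\Sigma x$ be the eventual value of the sequence, attained for all $n\ge n_0$. Then $x\in\sU$ by (F1) and (F3). For each $i\in I$, surjectivity and periodicity of the schedule give some $n\ge n_0$ with $i(n)=i$, whence by (F2) and (F3), $y\cong\refinedY{t}{n}\in\sY_i$; as $i$ was arbitrary, $y\in\sY^I$. Hence $x\to t\to y\to\Sigma x$ is the required truncation triangle, and together with the first paragraph this shows that $\sU$ is an aisle, indeed that $(\sU,\sY^I)$ is a t-structure.

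The main obstacle is not any one of the steps above but making the refinement terminate in the first place: truncating against $(\sX_i,\sY_i)$ can reintroduce material that an earlier truncation against $(\sX_j,\sY_j)$ had removed, so the na\"ive algorithm of Section~\ref{sec:compact} may oscillate forever, and the summand-stripping is what is meant to suppress this. Accordingly, the genuinely technical work of the present argument lies in the verification of (F1) and (F2) carried out in Section~\ref{sec:refined_algorithm} --- especially the octahedral bookkeeping that keeps the aisle term inside $\sU$ after each stripping, and the claim that the stripped object is a genuine direct summand of the preceding co-aisle truncation. The Krull--Schmidt hypothesis enters precisely here: it makes ``the maximal direct summand of a given object lying in $\sU$'' well defined up to isomorphism and ensures that ``eventually constant up to isomorphism'' is the right finiteness notion to run the argument.
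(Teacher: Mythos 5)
Your formal reductions are fine (closure of $\sod{\sX_i\mid i\in I}$ under suspension and extensions, $\sod{\sX_i\mid i\in I}\perp\sY^I$, and the observation that once the sequence stabilises the stable value $\refinedY{t}{n}$ is a summand of a $\sY_j$-truncation for every $j$, hence lies in $\sY^I$), but the proof has a genuine gap at exactly the point you label (F1) and defer to ``Section~\ref{sec:refined_algorithm}''. The claim $\refinedX{t}{n}\in\sX^I$ is not established by the construction of the refined algorithm; it \emph{is} the main content of Theorem~\ref{thm:x_n}. The octahedral bookkeeping you invoke only reduces $\refinedX{t}{n}\in\sX^I$ to the statement that the stripped-off summand $\waste{t}{n}$ lies in $\sX^I$, and this is not at all automatic: by construction $\waste{t}{n}$ is a direct summand of the left truncation $(\refinedY{t}{n-1})_{\sY_n}$, so a priori it lies in the co-aisle $\sY_n$, not in $\sX^I$. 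Proving $\waste{t}{n}\in\sX^I$ is where the real work sits: in the paper one runs the \emph{na\"ive} algorithm on $w=\waste{t}{n}$, shows by an iterated rotate-and-use-semi-orthogonality argument (peeling off the projections $p_1,p_2,\dots$) that the natural map $w\to\naiveY{w}{n}$ vanishes, concludes that the corresponding triangle splits so that $w$ is a direct summand of $\naiveX{w}{n}\in\sX^I$, and only then feeds this into the two octahedra. None of this appears in your proposal, so the theorem is not proved.

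Relatedly, you misstate the stripping rule: $\waste{t}{n}$ is defined as the maximal direct summand of $(\refinedY{t}{n-1})_{\sY_n}$ to which the composite map from $t$ is \emph{zero}, not as ``a maximal direct summand lying in $\sod{\sX_i\mid i\in I}$''. With your variant the aisle-membership of the stripped piece would be built in by fiat, but then you would be proving a statement about a different algorithm, whereas the hypothesis of the theorem refers to termination of the algorithm as actually defined; moreover your variant loses the property that the map from $t$ is non-zero onto every summand of $\refinedY{t}{n}$, which is what the paper's vanishing argument (and the later sections) rely on. So the correct repair is not cosmetic: you need to prove, for the algorithm as defined, that the wasted summands lie in $\sX^I$, along the lines of Steps~2 and~3 of the paper's proof.
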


The second part of the paper concerns the converse of Theorem~\ref{thm:refined}. In the case that $\sT$ is a piecewise hereditary triangulated category of tame representation type, we show that termination of the refined algorithm is both a necessary and sufficient condition for a finite set of aisles to be again an aisle. Furthermore, we are able to characterise the termination of the refined algorithm by an easily checkable combinatorial criterion. The main result of this part is (for the precise statement, see Theorem~\ref{thm:small:extended}):

\begin{introtheorem} \label{thm:small}
Let $\sT$ be a piecewise hereditary triangulated category of tame domestic type. Suppose  $(\sX_i,\sY_i)_{i\in I}$ is a finite set of compactly generated t-structures in $\sT$. Then, the following conditions are equivalent:
\begin{enumerate}
\item[(a)] The extension closure of the aisles is an aisle;
\item[(b)] The refined truncation algorithm of Section~\ref{sec:refined_algorithm} terminates;
\item[(c)] An easily checkable combinatorial criterion on the non-regular components holds.
\end{enumerate}
\end{introtheorem}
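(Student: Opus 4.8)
The plan is to prove the chain of implications $(b)\Rightarrow(a)$, $(c)\Rightarrow(b)$, and $(a)\Rightarrow(c)$, thereby closing the loop. The implication $(b)\Rightarrow(a)$ is already handed to us by Theorem~\ref{thm:refined}: termination of the refined algorithm produces, for every object $t\in\sT$, an eventually constant sequence whose stable value gives a truncation triangle $x\to t\to y$ with $x\in\sod{\sX_i\mid i\in I}$ and $y$ in the intersection of the co-aisles, which together with the (easy) closure and semi-orthogonality properties shows $\sod{\sX_i\mid i\in I}$ is an aisle. So the real content is the two remaining implications, which exploit the very explicit structure theory of $\sT$: since $\sT$ is piecewise hereditary of tame domestic type, it is equivalent to $\Db(\kh)$ for $\kh$ a hereditary abelian category with the AR-quiver a disjoint union of $\IZ A_\infty$-type transjective (preprojective/preinjective) components and finitely many tubes (or, in the weighted-projective-line case, a single $\IP^1_{p,q,r}$-type transjective component together with exceptional tubes). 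Compactly generated t-structures on such categories are, by the structure theory recalled earlier (or by Stanley--van Roosmalen type classifications), parametrised by combinatorial data: a choice of ``cut'' on each transjective component together with, on each tube, a choice of finitely many rays — equivalently, for each tubular component, a t-structure controlled by the finite cyclic combinatorics of that tube.

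For $(c)\Rightarrow(b)$: the key observation is that the refined algorithm decomposes along components of the AR-quiver. On the transjective part, any finite set of aisle-cuts has a common refinement which is again a cut (the transjective component is an $A_\infty^\infty$-type poset, so intersections/extensions of intervals behave well and the algorithm terminates after at most $|I|$ steps — there is ``enough room'' to separate the finitely many cuts). The obstruction can therefore only come from the tubes, where the cyclic structure allows the truncation functors for different $i$ to ``chase each other around the tube'' indefinitely. The combinatorial criterion in (c) is precisely the statement that on each non-regular (equivalently, each exceptional tubular) component the configuration of rays coming from the $\sX_i$ does not admit such an infinite descent — concretely, that a certain partial order on the rays (mod the rank of the tube) has no bad cycles, or equivalently that the relevant ``waste'' quantity $\waste{t}{n}$ stabilises. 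Under this hypothesis one shows, tube by tube, that the quasi-length of the relevant subquotients is non-increasing and strictly decreases until the algorithm stabilises; summing over the finitely many tubes and the transjective part gives global termination. This is the step I expect to be the main obstacle: making the tube bookkeeping precise, in particular identifying exactly which summands the \emph{refined} algorithm strips off and proving that after stripping, a genuine monovariant (decreasing quasi-length, or decreasing $\waste{t}{n}$) governs the process on each tube.

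For $(a)\Rightarrow(c)$, I would argue contrapositively: if the combinatorial criterion fails on some non-regular component, produce an explicit object $t$ supported on that tube for which no truncation triangle with respect to $\sod{\sX_i\mid i\in I}$ exists. The failure of (c) gives an infinite ``rotating'' sequence of rays; feeding the corresponding tube module (or a well-chosen extension of quasi-simple modules) into the averaging problem, one shows that any candidate truncation $x\to t\to y$ would force $x$ to contain an infinite iterated extension, contradicting that $x\in\sod{\sX_i\mid i\in I}$ lies in a category whose objects have bounded, finite filtration length. Here one leans on the hereditary hypothesis (all extensions are controlled by $\Ext^1$, and tube modules have finite, explicitly computable filtrations) and on the domestic-tame hypothesis (only finitely many tubes, each of finite rank, so ``infinite extension'' really is an obstruction rather than an artefact). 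The precise statement and the identification of the combinatorial criterion with a checkable condition on the non-regular components is deferred to Theorem~\ref{thm:small:extended}; the proof there will assemble these component-wise analyses into the stated equivalence.
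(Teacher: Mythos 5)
Your outline gets $(b)\Rightarrow(a)$ right (it is exactly Theorem~\ref{thm:x_n}), but the two substantive implications are built on a misidentification of where the obstruction lives, and this inverts the actual structure of the argument. You equate ``non-regular components'' with the exceptional tubes; in the tame hereditary setting the tubes \emph{are} the regular components, and the non-regular components are the postprojective/preinjective (transjective) ones. The combinatorial criterion (c) of Theorem~\ref{thm:small:extended} is a condition on precisely these transjective components: whenever every $\sY_i\cap\cN$ contains an infinite strictly increasing sequence, so must $\sY^I\cap\cN$. Your claim that on the transjective part ``any finite set of aisle-cuts has a common refinement\dots the algorithm terminates after at most $|I|$ steps'' is false, and Example~\ref{ex:no-t-structure} is a counterexample: for two t-structures on $\Db(\kk\tilde A_{2,2})$ the refined truncations of an indecomposable $t\in\Sigma\cN$ keep moving rightwards in the non-regular component forever, and $\sod{\sX_1,\sX_2}$ is not an aisle. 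Conversely, the tubes are never the obstruction and no ``no bad cycles on rays'' condition is needed there: the paper proves (Proposition~\ref{prop:length-bound}) that when a co-aisle meets a tube of rank $\tuberk$ in infinitely many indecomposables, all tube summands of a left truncation of $t$ have length at most $\tuberk\ceil{\len{t}/\tuberk}$, and Proposition~\ref{prop:regulartermination} then forces stabilisation inside each tube with no combinatorial hypothesis at all. So a criterion phrased on the tubes could not be equivalent to (a), and your proposed $(a)\Rightarrow(c)$ (an infinite-extension contradiction for an object supported on a tube) would be aiming at the wrong statement; the paper instead argues directly that if $(\sX^I,\sY^I)$ is a t-structure and $\cN$ is a relevant non-regular component, then by Lemma~\ref{lem:nonregint} the right truncation of any indecomposable $t\in\cN$ has no summands in $\cN$ (nor in $\Sigma^{-1}\cN$), so the left truncation has a non-regular summand strictly above $t$, and iterating produces the required increasing sequence in $\sY^I\cap\cN$.

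Even setting aside the misplaced criterion, your $(c)\Rightarrow(b)$ sketch is missing the mechanism by which component-wise stabilisation yields global termination. The paper does this via Lemma~\ref{lem:summands} (restarting the algorithm at $t_k=\refinedY{t}{k}$ dominates the continuation of the original sequence), the finiteness statements Lemma~\ref{lem:nonregulartermination} (non-regular components, where hypothesis (c) is genuinely used, together with sincerity of all but finitely many postprojectives) and Proposition~\ref{prop:regulartermination} (tubes), and finally the linear ordering of components plus the hereditary vanishing $\Hom(\sH,\Sigma^m\sH)=0$ for $m\geq 2$ to derive a contradiction with the non-vanishing compositions built into the refined algorithm. ``Summing over the finitely many tubes and the transjective part'' does not by itself rule out a non-terminating sequence that keeps migrating from one component to the next; that is exactly what the final step of the paper's proof excludes.
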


The combinatorial criterion needs to be checked only on the easier-to-understand non-regular component of the Auslander-Reiten quiver of $\sT$; this makes the authors believe that the results can be extended to all piecewise hereditary triangulated categories.

Theorem~\ref{thm:small} applies to (bounded derived categories of) path algebras of extended Dynkin quivers, and Deligne-Mumford curves of genus zero with non-positive orbifold Euler number (also known as `weighted projective lines of tame domestic type').

\smallskip
\noindent
\textbf{Acknowledgments.} We would like to thank Michael Gr\"ochenig, Martin Kalck and Dong Yang for useful comments.


\section{Preliminaries}\label{sec:prelim}

\noindent
Throughout this article $\sT$ will be a triangulated category with suspension functor (sometimes called translation or shift functor) $\Sigma\colon\sT\to\sT$. At various points, we shall introduce restrictions on $\sT$. We recall the definition of a t-structure from \cite[Definition 1.3.1]{BBD}:

\begin{definition} 
A \emph{t-structure} in a triangulated category $\sT$ consists of a pair of full subcategories $(\sX,\sY)$ satisfying the following axioms:
\begin{itemize}
\item[(i)]   $\Sigma\sX\subseteq \sX$ (and hence $\Sigma^{-1}\sY\subseteq \sY$);
\item[(ii)]  $\Hom_{\sT}(\sX,\sY)=0$;
\item[(iii)] For each object $t\in\sT$, there exists a distinguished triangle $\tri{t_{\sX}}{t}{t_{\sY}}$ with $t_{\sX}\in\sX$ and $t_{\sY}\in\sY$.
\end{itemize}
\end{definition}

The subcategory $\sX$ is called the \emph{aisle} of the t-structure, and $\sY$ is called the \emph{co-aisle}; see \cite[Section 1]{Keller-Vossieck}. The \emph{truncation triangle} in (iii) is uniquely determined and depends functorially on $t$; $t_{\sX}$ is called the \emph{right truncation} of $t$ and $t_{\sY}$ is called the \emph{left truncation}.

In a t-structure the full subcategories $\sX$ and $\sY$ are closed under extensions and taking direct summands. Furthermore, the aisle determines the co-aisle and vice versa in view of $\sX^\perp=\sY$ and ${}^\perp\sY=\sX$. We introduce these orthogonal subcategories and various others subcategories made from some $\sC\subset\sT$:

\medskip
\noindent
\begin{tabular}{@{} p{0.13\textwidth} @{} p{0.87\textwidth} @{}}
$\sC^\perp$,   & the \emph{right orthogonal} to $\sC$, the full subcategory of $t\in\sT$ with $\Hom(\sC,t)=0$, \\
${}^\perp\sC$, & the \emph{left orthogonal} to $\sC$, the full subcategory of $t\in\sT$ with $\Hom(t,\sC)=0$, \\
$\sod{\sC}$,   & the smallest full subcategory of $\sT$ containing $\sC$ that is closed under extensions 
                 and direct summands, \\
$\add(\sC)$,   & the smallest full, additive subcategory of $\sT$ containing $\sC$, \\
$\Susp(\sC)$,  & the \emph{(big) suspended subcategory generated by $\sC$}, the smallest full 
                 subcategory of $\sT$ containing $\sC$ which is closed under suspension, extensions,
                 set-indexed coproducts and taking direct summands, \\
$\ind(\sC)$,   & the set of \emph{indecomposable} objects of $\sC$ (up to isomorphism).
\end{tabular}

\begin{setup}
Let $\kk$ be a field and $\sT$ a $\kk$-linear triangulated category. 
Let $(\sX_i,\sY_i)_{i\in I}$ be a finite set of t-structures in $\sT$ and write $I=\{0,\ldots,d-1\}$.
Define two pairs of subcategories $(\sX^I,\sY^I)$ and $(\sX_I,\sY_I)$ by 
\[ \begin{array}{rcl @{\qquad} rcl}
 \sX^I &:=& \sod{\sX_i \mid i\in I}, & \sY^I &:=& \bigcap_{i\in I} \sY_i \\
 \sX_I &:=& \bigcap_{i\in I} \sX_i,   & \sY_I &:=& \sod{\sY_i \mid i\in I} .
\end{array} \]
\end{setup}

We have the following natural question: Are $(\sX_I,\sY_I)$ and $(\sX^I,\sY^I)$ t-structures in $\sT$? Note, in each case the required closure under (de)suspension, and the orthogonality condition, are always satisfied. The problem is to construct truncation triangles for each object of $\sT$.

The main results of this paper are stated and proved in terms of the averaged pair $(\sX^I,\sY^I)$. Dual statements can be formulated and proved for the other pair $(\sX_I, \sY_I)$. However, we shall refrain from stating them explicitly.


\section{The na\"ive truncation algorithm and Theorem~\ref{thm:big}} \label{sec:compact}

\noindent
In this section, we write down an algorithm which tries to compute the truncation triangle of an object with respect to $(\sX^I,\sY^I)$. We show that this always converges when applied to a finite set of compactly generated t-structures in a triangulated category with set-indexed coproducts.

\begin{naive-algorithm}
Given an object $t \in \sT$, we produce two sequences $\naiveX{t}{n}$ and $\naiveY{t}{n}$ of objects, with distinguished triangles $\naiveX{t}{n} \to t \to \naiveY{t}{n} \to \Sigma\naiveX{t}{n}$. By construction, all objects $\naiveX{t}{n}$ will lie in $\sX^I$.

\subsubsection*{Initial step}
Apply the t-structure $(\sX_0, \sY_0)$ to obtain a decomposition triangle
\[ t_{\sX_0} \too t \too t_{\sY_0} \too \Sigma t_{\sX_0} \]
for $t$. Then $t_{\sX_0}\in\sX_0\subset\sX^I$, so we set $\naiveX{t}{0}:=t_{\sX_0}$ and $\naiveY{t}{0}:=t_{\sY_0}$.

\subsubsection*{Iterative step}
Suppose we have a triangle
\[ \naiveX{t}{n-1} \too t \too \naiveY{t}{n-1} \too \Sigma\naiveX{t}{n-1} \]
with $\naiveX{t}{n-1}$ in $\sX^I$. We use the t-structure $(\sX_n,\sY_n)$, where $n$ is considered modulo $d$ and set $\naiveY{t}{n}:= (\naiveY{t}{n-1})_{\sY_n}$ to be the left truncation of $\naiveY{t}{n-1}$. Next, we define $\naiveX{t}{n}$ to be the cocone of the composition
 $t \to  \naiveY{t}{n-1} \to \naiveY{t}{n}$.
Using the octahedral axiom, this fits into a commutative diagram whose rows and columns are triangles:
\[ \xymatrix{
\naiveX{t}{n-1}  \ar@{=}[d] \ar[r] & \naiveX{t}{n} \ar[d] \ar[r] & (\naiveY{t}{n-1})_{\sX_n} \ar[d] \\
\naiveX{t}{n-1}             \ar[r] & t             \ar[d] \ar[r] & \naiveY{t}{n-1}          \ar[d] \\
                                   & \naiveY{t}{n}    \ar@{=}[r] & (\naiveY{t}{n-1})_{\sY_n}
} \]
From the top row of the diagram, we see that $\naiveX{t}{n}$ is an object in $\sX^I$.
\end{naive-algorithm}

In order to define convergence of the na\"ive algorithm, we need the following standard construction; see \cite{Neeman} for details.

\begin{definition} \label{def:homotopy-colimit}
Let $\sT$ be a triangulated category with set indexed coproducts and suppose we have a sequence of objects and morphisms, 
\[ t_0 \rightlabel{f_0} t_1 \rightlabel{f_1} t_2 \rightlabel{f_2} t_3 \rightlabel{f_3} \cdots . \]
The \emph{homotopy colimit} of this sequence is defined as the third object of the triangle
\[ \xymatrix@C=2.5em{
       \coprod_{i=0}^\infty t_i \ar[r]^{1-f_\bullet} & \coprod_{i=0}^\infty t_i \ar[r] & \hocolim{t_i} \ar[r] &
 \Sigma \coprod_{i=0}^\infty t_i.
} \]
\end{definition}

\begin{definition}
We say that the na\"ive algorithm \emph{terminates} for $t$ if there exists $n_0 \in \IN$ such that the sequence $(\naiveY{t}{n})$ is constant for $n \geq n_0$. 

We say that the na\"ive algorithm \emph{converges} for $t$ if the homotopy colimit of the sequence 
\[
\naiveY{t}{0} \to \naiveY{t}{1} \to \naiveY{t}{2} \to \cdots,
\]
$\hocolim{\naiveY{t}{n}}$ lies in $\sY^I$. Note that if the na\"ive algorithm terminates then it also converges. 
\end{definition}

Recall that an object $c\in\sT$ is called \emph{compact} if for any set-indexed family, $\{t_j\}_{j\in J}$, of objects of $\sT$ there is a canonical isomorphism
\[ \Hom_\sT(c, \coprod_{j\in J} t_j) \rightiso \coprod_{j\in J} \Hom_\sT(c, t_j) .\]

The next result is, independently, \cite[Theorem A.1]{AJS} and  \cite[Theorem III.2.3]{BR}.

\begin{thmdefn} \label{thm:compact-induced}
Let $\sT$ be a triangulated category with set-indexed coproducts and suppose that $\sC$ is a set of compact objects of $\sT$. Then the following pair, $(\sX,\sY)$, of full subcategories determines a t-structure on $\sT$:
\[
\sX = \Susp(\sC), \qquad
\sY = \{t\in\sT \mid \Hom_\sT(c, \Sigma^n t)=0 \text{ for all } c\in\sC \text{ and } n\leq 0\}.
\]
Such a t-structure is called \emph{compactly generated}.
\end{thmdefn}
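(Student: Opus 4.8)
The plan is to verify the three axioms of a t-structure for $(\sX,\sY)$; the first two are formal, and the content lies entirely in producing truncation triangles. It is convenient first to replace $\sC$ by the larger set $\sS:=\{\Sigma^m c\mid c\in\sC,\ m\ge 0\}$ of compact objects, which is closed under $\Sigma$; then $\sX=\Susp(\sS)$, while $\sY$ is exactly the full subcategory of those $t\in\sT$ with $\Hom_\sT(s,t)=0$ for all $s\in\sS$. Axiom (i), $\Sigma\sX\subseteq\sX$, is immediate from the definition of $\Susp$. For the semi-orthogonality axiom (ii): fixing $y\in\sY$, the full subcategory of objects $x$ with $\Hom_\sT(x,\Sigma^n y)=0$ for all $n\le 0$ contains $\sC$ by the definition of $\sY$ and is visibly closed under suspension, extensions, set-indexed coproducts and direct summands, hence contains $\Susp(\sC)=\sX$; taking $n=0$ gives $\Hom_\sT(\sX,\sY)=0$.

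For axiom (iii) I would run the standard ``killing'' (Adams-resolution) construction. Fix $t\in\sT$ and set $Y_0:=t$. Given $Y_n$, let $E_n:=\coprod_{(s,f)}s$ where $(s,f)$ ranges over the \emph{set} of all pairs with $s\in\sS$ and $f\in\Hom_\sT(s,Y_n)$, let $E_n\to Y_n$ be the morphism whose $(s,f)$-component is $f$, and complete this to a triangle $E_n\to Y_n\to Y_{n+1}\to\Sigma E_n$. This yields a sequence $t=Y_0\to Y_1\to Y_2\to\cdots$; I would then set $t_\sY:=\hocolim Y_n$, take $t_\sX$ to be the fibre of the induced morphism $t\to t_\sY$, so that $\tri{t_\sX}{t}{t_\sY}$ is a triangle, and verify that its outer terms lie in $\sX$ and $\sY$ respectively.

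That $t_\sY\in\sY$ is where compactness is used. For $s\in\sS$, applying $\Hom_\sT(s,-)$ to the triangle defining the homotopy colimit and using compactness of $s$ identifies $\Hom_\sT(s,t_\sY)$ with the colimit of the system $\Hom_\sT(s,Y_0)\to\Hom_\sT(s,Y_1)\to\cdots$; hence any morphism $s\to t_\sY$ factors through some $Y_n$, but such a morphism is one of those used to build $E_n$, so it becomes zero already in $Y_{n+1}$, hence in $t_\sY$. Thus $\Hom_\sT(s,t_\sY)=0$ for all $s\in\sS$, i.e.\ $t_\sY\in\sY$. That $t_\sX\in\sX$ follows by writing $X_n$ for the fibre of $t\to Y_n$ (so $X_0=0$): the octahedral axiom applied to $t\to Y_n\to Y_{n+1}$ gives triangles $X_n\to X_{n+1}\to E_n\to\Sigma X_n$, and since $E_n$ is a coproduct of objects of $\sS$ it lies in $\Susp(\sS)$, so induction gives $X_n\in\Susp(\sS)$ for all $n$. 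Finally, taking homotopy colimits of the triangles $X_n\to t\to Y_n$ (a homotopy colimit of triangles is again a triangle) shows $t_\sX\cong\hocolim X_n$, and $\Susp(\sS)$ is closed under the homotopy colimit of a sequence of its objects --- such a homotopy colimit is the cone of a morphism between two coproducts of the $X_n$, and $\Susp(\sS)$, being closed under coproducts, suspension and extensions, is closed under cones. Hence $t_\sX\in\Susp(\sS)=\sX$, completing axiom (iii). (The name ``compactly generated'' is then just terminology.)

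I expect the real obstacle to be the construction and analysis of $t_\sY$. Two points need care: the coproducts $E_n$ must be indexed by genuine sets, which is precisely why $\sC$ is assumed to be a set rather than a proper class; and the compactness hypothesis is essential, both to express $\Hom_\sT(s,t_\sY)$ as a filtered colimit and to ensure that a countable, $\omega$-indexed iteration suffices rather than a transfinite one. Everything else --- axioms (i) and (ii), and the fact that $t_\sX$ lands in $\Susp(\sS)$ --- is a formal consequence of octahedral bookkeeping together with the closure properties built into the definition of $\Susp$.
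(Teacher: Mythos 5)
Your argument is correct. Note, however, that the paper does not prove Theorem-Definition~\ref{thm:compact-induced} at all: it quotes it from \cite[Theorem A.1]{AJS} and \cite[Theorem III.2.3]{BR}, so there is no internal proof to compare against; what you have written is essentially the standard argument of those references. The reduction to the suspension-closed set $\sS$, the semi-orthogonality check via the closure properties defining $\Susp(\sC)$, and the killing construction with $E_n\to Y_n\to Y_{n+1}$ are all fine, and compactness is used exactly where it must be, namely to identify $\Hom_\sT(s,\hocolim Y_n)$ with $\colim\Hom_\sT(s,Y_n)$, which vanishes because each map $s\to Y_n$ dies in $Y_{n+1}$; this is the same mechanism the paper itself exploits in Lemma~\ref{lem:colimit-closure} and in the proof of Theorem~\ref{thm:big}. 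The one step you should make explicit is the identification $t_\sX\cong\hocolim X_n$: ``a homotopy colimit of triangles is again a triangle'' is not an axiom, and is justified by assembling the tower of triangles $X_n\to t\to Y_n$ and the maps $1-f_\bullet$ on the coproducts into a $3\times3$ diagram via \cite[Proposition 1.1.11]{BBD}, exactly as the paper does for Theorem~\ref{thm:big}. Alternatively you can avoid the identification entirely: take as truncation triangle the bottom row of that $3\times3$ diagram, whose first column $\coprod X_n\to\coprod X_n\to x$ already exhibits $x$ as the cone of a morphism between objects of $\Susp(\sC)$, hence in $\sX$, while the third column is the defining triangle of $\hocolim Y_n\in\sY$; since axiom (iii) only asks for the existence of some triangle $\tri{t_\sX}{t}{t_\sY}$, no comparison with your a priori chosen fibre is needed.
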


We will use the following lemma in the proof of Theorem~\ref{thm:big}. Its proof follows easily from the argument in \cite[Theorem III.2.3]{BR}; we include an argument for the convenience of the reader. 
\begin{lemma} \label{lem:colimit-closure}
Let $(\sX,\sY)$ be a compactly generated t-structure in a triangulated category $\sT$ with set-indexed coproducts. Then the co-aisle $\sY$ is closed under homotopy colimits.
\end{lemma}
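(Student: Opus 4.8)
The plan is to show directly that if $(t_i)_{i\geq 0}$ is a sequence in $\sY$ with morphisms $f_i\colon t_i\to t_{i+1}$, then $\hocolim t_i$ lies in $\sY$. Since $(\sX,\sY)$ is compactly generated, say by a set $\sC$ of compact objects, the co-aisle is by definition
\[
\sY = \{t\in\sT \mid \Hom_\sT(c,\Sigma^n t)=0 \text{ for all } c\in\sC,\ n\leq 0\}.
\]
So it suffices to fix $c\in\sC$ and $n\leq 0$ and show $\Hom_\sT(c,\Sigma^n\hocolim t_i)=0$. Because $\Sigma^n$ commutes with coproducts, $\Sigma^n\hocolim t_i = \hocolim\Sigma^n t_i$, and $\Sigma^n t_i\in\sY$ as well (co-aisles are closed under $\Sigma^{-1}$, hence under $\Sigma^n$ for $n\leq 0$); so I may as well replace $t_i$ by $\Sigma^n t_i$ and just prove $\Hom_\sT(c,\hocolim t_i)=0$ for every compact $c\in\sC$, using that all $t_i\in\sY$.

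The key step is to apply $\Hom_\sT(c,-)$ to the defining triangle
\[
\coprod_{i\geq 0} t_i \xrightarrow{\,1-f_\bullet\,} \coprod_{i\geq 0} t_i \too \hocolim t_i \too \Sigma\coprod_{i\geq 0} t_i
\]
and use the resulting long exact sequence. By compactness of $c$, $\Hom_\sT(c,\coprod_i t_i)\cong\coprod_i\Hom_\sT(c,t_i)$, and similarly for $\Sigma\coprod_i t_i\cong\coprod_i\Sigma t_i$. Since each $t_i$ and each $\Sigma t_i$ lies in $\sY$ — here I use that $\sX=\Susp(\sC)$ contains $\Sigma c$, so membership in $\sY$ at shifts $n\leq 0$ is exactly what is available, and in particular $\Hom_\sT(c,t_i)=0$ and $\Hom_\sT(c,\Sigma t_i)$ need not vanish; let me be careful here — actually $\Hom_\sT(c,t_i)=0$ holds for $c\in\sC$ since $t_i\in\sY$ (take $n=0$), but $\Hom_\sT(c,\Sigma t_i)$ corresponds to $n=1>0$ and is not controlled. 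The correct move is to observe that the term $\coprod_i\Sigma t_i$ only contributes via $\Hom_\sT(c,\Sigma\coprod_i t_i)$, and in the long exact sequence the relevant portion is
\[
\coprod_i\Hom_\sT(c,t_i)\xrightarrow{1-f_\bullet}\coprod_i\Hom_\sT(c,t_i)\too\Hom_\sT(c,\hocolim t_i)\too\coprod_i\Hom_\sT(c,\Sigma t_i).
\]
The first two terms vanish because $\Hom_\sT(c,t_i)=0$, so $\Hom_\sT(c,\hocolim t_i)$ injects into $\coprod_i\Hom_\sT(c,\Sigma t_i)$; and the same argument one shift up (applied to $\Sigma t_i\in\sY$, using $\Hom_\sT(c,\Sigma t_i)=0$, which holds since $\Sigma t_i\in\sY$... no: $\Sigma t_i\in\sY$ only if $t_i\in\Sigma^{-1}\sY\subseteq\sY$, which is true, so indeed $\Sigma t_i\in\sY$ and hence $\Hom_\sT(c,\Sigma t_i)=0$). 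So in fact $\Hom_\sT(c,\Sigma t_i)=0$ already, and the long exact sequence immediately forces $\Hom_\sT(c,\hocolim t_i)=0$.

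Thus the whole argument reduces to two facts, both of which are elementary: first, $\sY$ is closed under $\Sigma$ (equivalently, $\Sigma^{-1}\sX\subseteq\sX$, which holds for any t-structure, wait — t-structure axiom (i) gives $\Sigma\sX\subseteq\sX$, i.e. $\sY$ is closed under $\Sigma^{-1}$, so I genuinely need that $\Sigma t_i\in\sY$); the safe statement is that for $t_i\in\sY$ and any $c\in\sC$, $\Hom_\sT(c,\Sigma^m t_i)=0$ for all $m\leq 0$, and this is exactly the defining condition, so I only need the $m=0$ and $m=-1$ cases in the long exact sequence obtained after applying $\Hom_\sT(c,\Sigma^{-1}(-))$ if necessary. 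I will phrase it as: apply $\Hom_\sT(c,\Sigma^{n}(-))$ for $n\leq 0$ to the homotopy colimit triangle of $(t_i)$, use compactness to commute Hom past the coproducts, and read off vanishing from the long exact sequence since every $\Hom_\sT(c,\Sigma^n t_i)$ and $\Hom_\sT(c,\Sigma^{n+1}t_i)$... here the index $n+1$ may be $>0$ only when $n=0$, in which case $\Sigma\coprod t_i$ contributes $\Hom_\sT(c,\Sigma t_i)$ — and $\Sigma t_i=\Sigma\, t_i$ with $t_i\in\sY$ need not be in $\sY$. To sidestep this last subtlety cleanly, I will instead argue that $\hocolim t_i\in\sY$ iff $\Hom_\sT(c,\Sigma^n\hocolim t_i)=0$ for all $n\leq 0$; fixing such an $n$, the triangle defining the homotopy colimit, after applying $\Sigma^n$, has all three rotations' first two terms of the form $\coprod\Hom_\sT(c,\Sigma^n t_i)$ and $\coprod\Hom_\sT(c,\Sigma^{n}t_i)$ (the map $1-f_\bullet$ is an endomorphism of the same object), so both vanish, forcing $\Hom_\sT(c,\Sigma^n\hocolim t_i)$ to sit between two zeros — the only genuinely external term $\coprod\Hom_\sT(c,\Sigma^{n+1}t_i)$ is handled by running the argument for $n$ replaced by $n-1$ first, i.e. by downward induction is not needed since there is no lower bound; rather, the map into $\coprod\Hom_\sT(c,\Sigma^{n+1}t_i)$ has image equal to the cokernel of $1-f_\bullet$ on $\coprod\Hom_\sT(c,\Sigma^n t_i)=0$, hence is zero, so $\Hom_\sT(c,\Sigma^n\hocolim t_i)$ is a quotient of $0$, i.e. $0$. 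This is the heart of the matter; I expect the only real care needed is bookkeeping the shifts in the long exact sequence, and that is routine rather than an obstacle.
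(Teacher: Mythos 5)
Your overall strategy --- fix $c\in\sC$ and $n\leq 0$, apply $\Hom_\sT(c,\Sigma^n(-))$ to the triangle defining the homotopy colimit, and use compactness to pull $\Hom$ past the coproducts --- is the right one, and it is essentially what the paper does, except that the paper short-circuits the bookkeeping by quoting Neeman's result that $\Hom_\sT(c,\Sigma^n\hocolim t_i)\cong\colim\Hom_\sT(c,\Sigma^n t_i)$ for compact $c$, after which termwise vanishing finishes the proof in one line. The problem is that your execution of the long exact sequence has a genuine error at the only delicate point, namely $n=0$. Exactness together with the vanishing of $\coprod_i\Hom_\sT(c,\Sigma^n t_i)$ tells you that the connecting map $\Hom_\sT(c,\Sigma^n\hocolim t_i)\to\coprod_i\Hom_\sT(c,\Sigma^{n+1}t_i)$ is \emph{injective}, with image equal to the kernel of $(1-f_\bullet)_*$ on $\coprod_i\Hom_\sT(c,\Sigma^{n+1}t_i)$; it does not tell you that this map is zero. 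Your final sentence asserts instead that its image is the cokernel of $1-f_\bullet$ on the degree-$n$ coproduct and concludes that $\Hom_\sT(c,\Sigma^n\hocolim t_i)$ is a quotient of $0$; this misplaces the exactness positions (it is the \emph{kernel} of the connecting map that is identified with a quotient of the degree-$n$ term). For $n=0$ the target $\coprod_i\Hom_\sT(c,\Sigma t_i)$ is genuinely uncontrolled: $t_i\in\sY$ only gives $\Hom_\sT(c,\Sigma^m t_i)=0$ for $m\leq 0$, and co-aisles are closed under $\Sigma^{-1}$, not $\Sigma$ --- so your earlier claim that $\Sigma t_i\in\sY$ was indeed wrong, as you half-noticed, and the attempted sidestep does not repair it.

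The gap is easy to close, in either of two ways. First, one can observe that $(1-f_\bullet)_*$ is always injective on an $\IN$-indexed coproduct of abelian groups $\coprod_i M_i$ (an element has finite support, and comparing components from slot $0$ upwards forces it to vanish); hence the kernel into which $\Hom_\sT(c,\hocolim t_i)$ injects is zero, and your argument goes through for all $n\leq 0$. Second, and this is the paper's route, one can simply invoke the standard fact that morphisms out of a compact object commute with homotopy colimits, $\Hom_\sT(c,\Sigma^n\hocolim t_i)\cong\colim\Hom_\sT(c,\Sigma^n t_i)=0$, which packages exactly this injectivity statement. As written, your proposal does not contain either of these ingredients, so the $n=0$ case is not established.
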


\begin{proof}
Let $\sC$ be a set of compact objects generating $(\sX,\sY)$. Then the right aisle $\sY$ is defined by
 $\sY  =  \{t\in\sT \mid \Hom_\sT(c, \Sigma^n t)=0 \text{ for all } c\in\sC \text{ and } n\leq 0\}$.
Consider a sequence of objects and morphisms in $\sY$:
\[ y_0 \rightlabel{f_0} y_1 \rightlabel{f_1} y_2 \rightlabel{f_2} y_3 \rightlabel{f_3} \cdots . \]
Now, since $\sC$ consists of compact objects, there is an isomorphism 
\[ \Hom_\sT(c,\Sigma^n\hocolim{y_i}) \cong \colim\Hom_\sT(c,\Sigma^n y_i)=0 \]
for all $n\leq 0$ and all $i\geq 0$, where the first isomorphism is by \cite[Lemma 2.8]{Neeman} and the second equality by the definition of $\sY$. It follows that $\hocolim{y_i}\in\sY$.
\end{proof}

\begin{proof}[Proof of Theorem~\ref{thm:big}]
Let $(\sX_i,\sY_i)$ be a family of t-structures in $\sT$ indexed by the finite set $I=\{0,\ldots,d-1\}$ for some $d\in\IN$. Let $t\in\sT$ and apply the na\"ive algorithm to obtain a tower of distinguished triangles.
\begin{equation} \label{eqn:tower}
\xymatrix@R=4ex{
\naiveX{t}{0} \ar[r]\ar[d] & t \ar[r]\ar@{=}[d] & \naiveY{t}{0} \ar[r]\ar[d] & \Sigma\naiveX{t}{0}\ar[d] \\
\naiveX{t}{1} \ar[r]\ar[d] & t \ar[r]\ar@{=}[d] & \naiveY{t}{1} \ar[r]\ar[d] & \Sigma\naiveX{t}{1}\ar[d] \\
\vdots                     & \vdots             & \vdots                     & \vdots 
}
\end{equation}
By construction $\naiveX{t}{n}\in\sX^I$ and $\naiveY{t}{n}\in\sY_n$, where $n$ is interpreted modulo $d$. 
Consider the subsequences 
\[
\naiveY{t}{i} \too \naiveY{t}{i+d} \too \naiveY{t}{i+2d} \too \naiveY{t}{i+3d} \too \cdots
\]
with objects in $\sY_i$ for each $i\in I$. It follows from \cite[Lemma 1.7.1]{Neeman-book} that $\hocolim{\naiveY{t}{i+md}} \cong \hocolim{\naiveY{t}{n}}$ and Lemma~\ref{lem:colimit-closure} implies that $\hocolim{\naiveY{t}{i+md}}\in\sY_i$ for each $i\in I$. Therefore, we see that $\hocolim{\naiveY{t}{n}}\in\sY^I$. The maps $t\to\naiveY{t}{n}$ give rise to a map $t\to\hocolim{\naiveY{t}{n}}$, which we extend to a distinguished triangle:
\[ \tri{x}{t}{\hocolim{\naiveY{t}{n}}} .\]

We now need to verify that $x\in\sX^I$. This is clear, because the tower \eqref{eqn:tower} produces a $3\times 3$ diagram of distinguished triangles by \cite[Proposition 1.1.11]{BBD}:
\[
\xymatrix@R=4ex{
 \coprod_{n=0}^\infty \naiveX{t}{n} \ar[r] \ar[d]               & \coprod_{n=0}^\infty t                     \ar[r] \ar[d]^-{1-\mathrm{id}_\bullet}  & 
 \coprod_{n=0}^\infty \naiveY{t}{n} \ar[r] \ar[d]^-{1-f_\bullet} & \Sigma \coprod_{n=0}^\infty \naiveX{t}{n}         \ar[d] \\
 \coprod_{n=0}^\infty \naiveX{t}{n} \ar[r] \ar[d]               & \coprod_{n=0}^\infty t                     \ar[r] \ar[d] &
 \coprod_{n=0}^\infty \naiveY{t}{n} \ar[r] \ar[d]               & \Sigma \coprod_{n=0}^\infty \naiveX{t}{n}         \ar[d] \\
 x                                \ar[r] \ar[d]               & t                                         \ar[r] \ar[d] &
 \hocolim{\naiveY{t}{n}}          \ar[r] \ar[d]               & \Sigma x                                         \ar[d] \\
 \Sigma\coprod_{n=0}^\infty \naiveX{t}{n}  \ar[r]               & \Sigma\coprod_{n=0}^\infty t               \ar[r]         &
 \Sigma\coprod_{n=0}^\infty \naiveY{t}{n}  \ar[r]               & \Sigma^2\coprod_{n=0}^\infty \naiveX{t}{n}
}
\]
Since $\sX^I$ is closed under coproducts, extensions and suspension, it follows that $x$ in the first vertical distinguished triangle also lies in $\sX^I$. This completes the proof of Theorem~\ref{thm:big}.
\end{proof}


\section{The refined truncation algorithm and Theorem~\ref{thm:refined}} \label{sec:refined_algorithm}

\noindent
For applications in small triangulated categories, i.e.\ those which do not possess all set-indexed coproducts, it does not make sense to talk about the convergence of the na\"ive algorithm, in general.
One can only sensibly ask for the stronger condition that it terminates.
However, we can have a finite set of t-structures for which the extension closure of the aisles is an aisle, but for which the na\"ive algorithm does not terminate. This is made clear in the following simple example.

\begin{example} \label{ex:naive-algorithm-fails}
Let $\sT = \Db(\kk A_2)$ be the bounded derived category of right modules over the path algebra $\kk A_2$ of the $A_2$ quiver. It is well known that the indecomposable objects of $\sT$ are given by the simple-projective $S(1)=P(1)$, the projective-injective $P(2)=I(1)$, the simple-injective $S(2)=I(2)$, and all (de)suspensions of those. 
Consider the t-structure given by
$\sX_1 = \add(\Sigma^i P(2) \mid i\in\IZ)$ and $\sY_1 = \add(\Sigma^i P(1) \mid i\in\IZ)$. Likewise, there are t-structures $(\sX_2,\sY_2)$ using $I(2)$ and $I(1)$ instead of the projectives, and $(\sX_3,\sY_3)$ using $S(1)$ and $S(2)$.
They are drawn in Figure~\ref{fig:A2_t-structures}. The na\"ive algorithm does not terminate, but the extension closure of the aisles is $\Db(\kk A_2)$, and hence an aisle.
\begin{figure}
\centering
\includegraphics[width=0.8\textwidth]{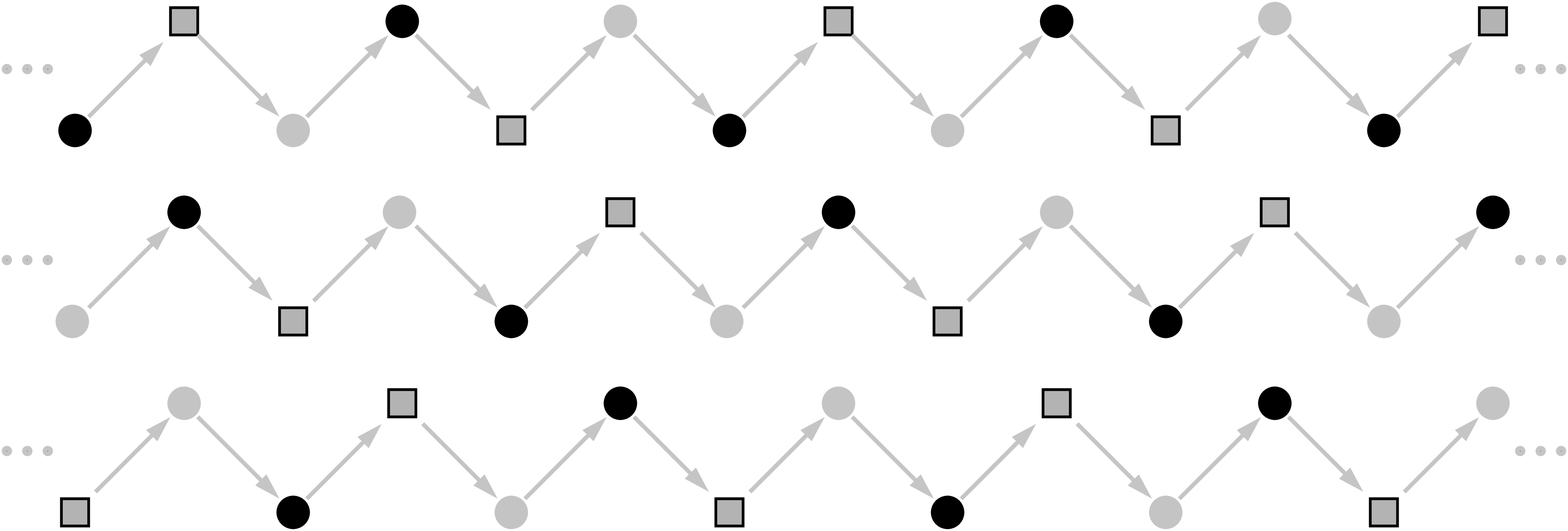}
\caption[Three t-structures on $\sT=\Db(\IX(2,2))$.]{ \label{fig:A2_t-structures} \small
   Three t-structures on $\sT=\Db(\kk A_2)$ defined by $\sX$ (\dotspace\boxeddot) and $\sY$ (\dotspace\blackdot); 
   the remaining indecomposables (\dotspace\greydot) belong to neither $\sX$ nor $\sY$.}
\end{figure}
\end{example}

\begin{remark} 
Observe that in this example the morphisms $\naiveY{t}{n} \to \naiveY{t}{n+2}$ are zero for all $n\in\IN$. In particular,
$\Hom(t,\naiveY{t}{n})=0$ for $n\geq1$. In fact, if we were to work in the corresponding `big' category, these zero morphisms would imply that the homotopy colimit of the sequence $(\naiveY{t}{n})$ is zero (consider the subsequence $(\naiveY{t}{2n})$ with zero morphisms), which is actually an object in the `small' category. The additional summands with a zero morphism from $t$ prevent the algorithm from terminating, but do not contribute anything to the `limit'. This observation motivates the following refinement of the algorithm.
\end{remark}

\begin{setup}
Let $\kk$ be a field and $\sT$ be a $\kk$-linear, Krull-Schmidt triangulated category, i.e.\ all idempotents of $\sT$ split, each object of $\sT$ is a direct sum of finitely many indecomposable objects, and each indecomposable object has local endomorphism ring.
\end{setup}

For example, bounded derived categories of $\kk$-linear, Hom-finite abelian categories are Krull-Schmidt.

\begin{refined-algorithm} \label{alg:new}
Let $t$ be an object of $\sT$. Again, we construct two sequences of objects $\refinedX{t}{n}$ and $\refinedY{t}{n}$, fitting into distinguished triangles
 $\refinedX{t}{n} \to t \to \refinedY{t}{n} \to \Sigma\refinedX{t}{n}$
where $\refinedX{t}{n}$ is in $\sX^I$ but with the additional property that the composition of $t \to \refinedY{t}{n}$ with the projection onto each summand of $\refinedY{t}{n}$ is non-zero.

\subsubsection*{Initial step.}
We truncate $t$ with respect to the t-structure $(\sX_0,\sY_0)$ to get the distinguished triangle
\[ t_{\sX_0} \too t \rightlabel{\alpha_0} t_{\sY_0} \too \Sigma t_{\sX_0} .\]
Note that $t_{\sX_0}\in\sX_{0}\subset\sX^I$ and the morphism $\alpha_0$ is non-zero to all summands of $t_{\sY_0}$.
Thus $\refinedX{t}{0}:=t_{\sX_0}$ and $\refinedY{t}{0}:=t_{\sY_0}$ have the desired properties.

\subsubsection*{Iterative step.} 
We have a triangle
\[ \refinedX{t}{n-1} \too t \xrightarrow{\alpha_{n-1}} \refinedY{t}{n-1} \too \Sigma \refinedX{t}{n-1} \]
with $\refinedX{t}{n-1}\in\sX^I$ and such that
$\alpha_{n-1}$ is non-zero on any summand of $\refinedY{t}{n-1}$.
Truncate $\refinedY{t}{n-1}$ with respect to the t-structure $(\sX_n,\sY_n)$, where the index $n$ is interpreted modulo $d$, to get the triangle
\begin{equation}  \label{eqn:y_n}
(\refinedY{t}{n-1})_{\sX_n} \to \refinedY{t}{n-1} \to (\refinedY{t}{n-1})_{\sY_n} \to \Sigma(\refinedY{t}{n-1})_{\sX_n} .
\end{equation}
Now consider the composition
\[ \xymatrix{
   t \ar[r]^-{\alpha_{n-1}} \ar[dr]_-{\tilde{\alpha}_n} & \refinedY{t}{n-1} \ar[d] \\
                                                       & (\refinedY{t}{n-1})_{\sY_n} .
} \]
Using the Krull-Schmidt hypothesis on $\sT$, we decompose $(\refinedY{t}{n-1})_{\sY_n}$
\[ \xymatrix{
 {}\save[]+<0em,0em>*{(\refinedY{t}{n-1})_{\sY_n} =} \restore
 & \refinedY{t}{n} \oplus \waste{t}{n}\phantom{X} \ar[dl]_{p_n} \ar[dr]^{\pi_n} \\
 \refinedY{t}{n}             &                                     & \waste{t}{n}
} \]
where $p_n$ and $\pi_n$ are projections, in the unique way such that
\begin{align*}
 \alpha_n := p_n \tilde\alpha_n &\colon t \to (\refinedY{t}{n-1})_{\sY_n} \to \refinedY{t}{n}
                                     \text{ is non-zero onto all summands of $\refinedY{t}{n}$}, \\
 \pi_n \tilde\alpha_n &\colon t \to (\refinedY{t}{n-1})_{\sY_n} \to \waste{t}{n} \text{ is zero.}
\end{align*}
Now define $\refinedX{t}{n}$ to be the cocone of $\alpha_n$ to get the distinguished triangle
\begin{equation}  \label{eqn:x_n}
\refinedX{t}{n} \too t \rightlabel{\alpha_n} \refinedY{t}{n} \too \Sigma \refinedX{t}{n}
\end{equation}
The morphism $\alpha_n$ satisfies the non-vanishing property by construction, so we only need to check that $\refinedX{t}{n}\in\sX^I$; this is contained in the main result of this section.
\end{refined-algorithm}

This was called Theorem~\ref{thm:refined} in the introduction:

\begin{theorem} \label{thm:x_n}
Let $\sT$ be a $\kk$-linear, Krull-Schmidt triangulated category. If the refined algorithm terminates then $\sX^I$ is an aisle.
\end{theorem}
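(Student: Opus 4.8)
The plan is to verify that the pair $(\sX^I,(\sX^I)^\perp)$ is a t-structure, so that $\sX^I$ is an aisle by definition. Three of the four things to check are formal. The subcategory $\sX^I$ is closed under extensions and direct summands by the very definition of $\sod{-}$, and it is closed under $\Sigma$ because $\sod{-}$ of a family of $\Sigma$-closed subcategories is again $\Sigma$-closed. Moreover $(\sX^I)^\perp=\sY^I$: an object $y$ satisfies $\Hom(\sX^I,y)=0$ if and only if $\Hom(\sX_i,y)=0$ for every $i\in I$ (the full subcategory of $z$ with $\Hom(z,y)=0$ is closed under extensions, suspension and summands, and it contains $\sX_i$ exactly when $y\in\sY_i$), i.e.\ if and only if $y\in\bigcap_i\sY_i=\sY^I$. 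So everything reduces to producing, for each $t$, a triangle $\tri{x}{t}{y}$ with $x\in\sX^I$ and $y\in\sY^I$ — and the refined algorithm is designed to do exactly this.

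The first genuine step is to prove the assertion left open in the statement of the algorithm: $\refinedX{t}{n}\in\sX^I$ for every $n$, by induction on $n$ and \emph{without} using termination. The base case is immediate, since $\refinedX{t}{0}=t_{\sX_0}\in\sX_0\subseteq\sX^I$. For the inductive step, let $\tilde X_n$ denote the cocone of $\tilde\alpha_n\colon t\to(\refinedY{t}{n-1})_{\sY_n}$, i.e.\ the object the \emph{na\"ive} algorithm would produce. Applying the octahedral axiom to the composable pair $t\xrightarrow{\alpha_{n-1}}\refinedY{t}{n-1}\to(\refinedY{t}{n-1})_{\sY_n}$ exactly as in Section~\ref{sec:compact} yields a triangle $\refinedX{t}{n-1}\to\tilde X_n\to(\refinedY{t}{n-1})_{\sX_n}\to\Sigma\refinedX{t}{n-1}$; since $\refinedX{t}{n-1}\in\sX^I$ by induction and $(\refinedY{t}{n-1})_{\sX_n}\in\sX_n\subseteq\sX^I$, extension closure gives $\tilde X_n\in\sX^I$. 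On the other hand, under the Krull--Schmidt splitting $(\refinedY{t}{n-1})_{\sY_n}=\refinedY{t}{n}\oplus\waste{t}{n}$ the morphism $\tilde\alpha_n$ has components $(\alpha_n,0)$, so taking the cocone distributes over this direct sum (it is a direct sum of triangles) and $\tilde X_n\cong\refinedX{t}{n}\oplus\Sigma^{-1}\waste{t}{n}$. Hence $\refinedX{t}{n}$ is a direct summand of $\tilde X_n\in\sX^I$, and since $\sX^I$ is closed under summands, $\refinedX{t}{n}\in\sX^I$.

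It remains to use termination to place the terminal $\refinedY{t}{n}$ in $\sY^I$. Fix $t$ and suppose $(\refinedY{t}{n})$ is eventually constant with stable value $y=\refinedY{t}{n_0}$. Given $j\in I$, pick $n\geq n_0$ with $n+1\equiv j\pmod d$; then $(\refinedY{t}{n})_{\sY_j}=\refinedY{t}{n+1}\oplus\waste{t}{n+1}\cong y\oplus\waste{t}{n+1}$, exhibiting $y$ as a direct summand of the left truncation $(\refinedY{t}{n})_{\sY_j}$, which lies in $\sY_j$. As $\sY_j$ is closed under summands, $y\in\sY_j$; and as $j$ was arbitrary, $y\in\bigcap_{j\in I}\sY_j=\sY^I$. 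The triangle $\refinedX{t}{n_0}\to t\to y\to\Sigma\refinedX{t}{n_0}$ supplied by the algorithm then has $\refinedX{t}{n_0}\in\sX^I$ and $y\in\sY^I=(\sX^I)^\perp$, so it is the required truncation triangle. This completes the verification that $(\sX^I,(\sX^I)^\perp)$ is a t-structure, hence that $\sX^I$ is an aisle.

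I expect the only subtle point to be the splitting $\tilde X_n\cong\refinedX{t}{n}\oplus\Sigma^{-1}\waste{t}{n}$ in the inductive step: this is precisely where discarding $\waste{t}{n}$ and the Krull--Schmidt hypothesis enter, and it is what keeps every $\refinedX{t}{n}$ inside $\sX^I$ while still allowing the algorithm to terminate, in contrast with the na\"ive version.
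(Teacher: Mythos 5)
Your proof is correct, but its core is genuinely different from (and considerably shorter than) the paper's argument. The paper also reduces everything to showing $\refinedX{t}{n}\in\sX^I$ for all $n$, but it does so in two stages: first, two octahedra exhibit $\refinedX{t}{n}$ as an extension of $\waste{t}{n}$ by the na\"ive cocone $u$ (so that $\refinedX{t}{n}\in\sX^I$ would follow from $\waste{t}{n}\in\sX^I$), and then $\waste{t}{n}\in\sX^I$ is proved by a separate, rather involved argument: the na\"ive algorithm is run on $w=\waste{t}{n}$ and an iterated diagram chase shows that the comparison map $w\to\naiveY{w}{n}$ vanishes, so the corresponding triangle splits and $w$ is a summand of $\naiveX{w}{n}\in\sX^I$. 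You bypass all of this with the observation that, by the defining property $\pi_n\tilde\alpha_n=0$ and $\alpha_n=p_n\tilde\alpha_n$, the morphism $\tilde\alpha_n$ has components $(\alpha_n,0)$, so its cocone $u$ --- which lies in $\sX^I$ by the inductive hypothesis and the na\"ive-step octahedron --- is isomorphic to $\refinedX{t}{n}\oplus\Sigma^{-1}\waste{t}{n}$: this is the direct sum of the triangle over $\alpha_n$ with the rotated contractible triangle on $\waste{t}{n}$, together with uniqueness of cocones and the standard fact that a finite direct sum of distinguished triangles is distinguished. Closure of $\sod{-}$ under direct summands then gives $\refinedX{t}{n}\in\sX^I$ immediately, and in fact the slightly stronger conclusion $\Sigma^{-1}\waste{t}{n}\in\sX^I$, which implies the $\waste{t}{n}\in\sX^I$ that the paper's Steps~2 and~3 labour to establish. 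What the paper's route buys is the explicit vanishing statement of its Step~3 (and it avoids invoking the splitting of triangles); what yours buys is a substantial simplification of the hardest part of the proof. Your treatment of the termination step --- the stable value $y$ is a direct summand of its own truncation $(y)_{\sY_j}$ for every $j$, hence lies in $\sY^I$ --- correctly fills in a point the paper only asserts, and the formal reductions (closure properties and $(\sX^I)^\perp=\sY^I$) match what the paper notes in Section~\ref{sec:prelim}.
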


\begin{proof} 
We shall actually prove that the object $\refinedX{t}{n}$ in the triangle \eqref{eqn:x_n} above lies in $\sX^I$ for all $n \in \IN$. The conclusion that $\sX^I$ is an aisle follows directly from this and the fact that upon termination of the refined algorithm, the object $\refinedY{t}{n}$ lies in $\sY^I$. Thus, the triangle \eqref{eqn:x_n} is the required truncation triangle.
The proof is elaborate and we proceed in a number of steps.

\noindent
\begin{tabular}{ @{} p{0.1\textwidth} @{} p{0.9\textwidth}}
Step 1: & $\refinedX{t}{n}\in\sX^I$ follows from $\waste{t}{n}\in\sX^I$. \\
Step 2: & If the natural map $w\to\naiveY{w}{n}$ is zero for $w\in\sT$ and some $n$, then $w\in\sX^I$. \\
Step 3: & The map $\waste{t}{n}\to\naiveY{\waste{t}{n}}{n}$ is zero.
\end{tabular}
\noindent
The vanishing proved in Step 3 allows us to invoke Step 2, which shows that $\waste{t}{n}\in\sX^I$. The result then follows directly from Step 1.

\medskip\noindent
\emph{Step 1.}
Assuming that $\waste{t}{n}\in\sX^I$, the following two octahedra give $\refinedX{t}{n}\in\sX^I$:

\[ \xymatrix@C=1em{
   \refinedX{t}{n-1} \ar[r] \ar@{=}[d] & u                        \ar[d] \ar[r] & (\refinedY{t}{n-1})_{\sX_n} \ar[d] \\
   \refinedX{t}{n-1} \ar[r]            & t                        \ar[d] \ar[r] & \refinedY{t}{n-1}         \ar[d] \\
                                       & (\refinedY{t}{n-1})_{\sY_n}   \ar@{=}[r] & (\refinedY{t}{n-1})_{\sY_n}        
}
\qquad
\xymatrix@C=2em{
   u \ar[r] \ar@{=}[d]                 & \refinedX{t}{n}          \ar[d] \ar[r] & \waste{t}{n}              \ar[d] \\
   u \ar[r]                            & t                        \ar[d] \ar[r] & (\refinedY{t}{n-1})_{\sY_n} \ar[d] \\
                                       & \refinedY{t}{n}             \ar@{=}[r] & \refinedY{t}{n} 
} \]

The auxiliary object $u$ defined by the left-hand diagram is in $\sX^I$, as the top row shows. Invoking the assumption $\waste{t}{n}\in\sX^I$, the top row of the right-hand diagram then gives $\refinedX{t}{n}\in\sX^I$.

\medskip\noindent
\emph{Step 2.}
We look at the na\"ive algorithm applied to an object $w$ (later, we will use $w=\waste{t}{n}$). The result is the following tower of triangles (i.e.\ a commutative diagram in which all horizontal triangles are distinguished)
\[ \xymatrix@R=3ex@!C{
\naiveX{w}{0} \ar[r] \ar[d] & w \ar[r]^-{f_0} \ar@{=}[d] & \naiveY{w}{0} \ar[r] \ar[d] & \Sigma\naiveX{w}{0} \ar[d] \\
\naiveX{w}{1} \ar[r] \ar[d] & w \ar[r]^-{f_1} \ar@{=}[d] & \naiveY{w}{1} \ar[r] \ar[d] & \Sigma\naiveX{w}{1} \ar[d] \\
\vdots               \ar[d] & \vdots         \ar@{=}[d] & \vdots               \ar[d] & \vdots              \ar[d] \\
\naiveX{w}{n} \ar[r]        & w \ar[r]^-{f_n}           & \naiveY{w}{n} \ar[r]        & \Sigma\naiveX{w}{n} \\
} \]
Recall that by construction, each $\naiveY{w}{n}$ is in $\sY_n$. The assumption $f_n=0$ means that the bottom triangle splits as $\naiveX{w}{n} \cong w \oplus \Sigma^{-1}\naiveY{w}{n}$. With $\naiveX{w}{n} \in\sX^I$ and $\sX^I$ closed under summands, we get $w\in\sX^I$.

\medskip\noindent
\emph{Step 3.}
For arbitrary $t\in\sT$, we prove the following statement: The natural map $\waste{t}{n}\to\naiveY{\waste{t}{n}}{n}$ is zero. Put $w:=\waste{t}{n}$ for brevity.

The claim is trivial for $n=0$ as $\refinedX{t}{0}=\naiveX{t}{0}\in\sX_0\subset\sX^I$ and $\waste{t}{0}=0$.
For any $n>0$, we consider the starting triangle together with a chain of maps composing to $\refinedY{t}{0}\to\naiveY{w}{0}$:

\[ \xymatrix@R=4ex@!C{
    \refinedX{t}{0}     \ar[r]                      & t      \ar[r]^-{\alpha_0} \ar@{-->}@/_2.5pc/[6,1]_0 &
    \refinedY{t}{0}     \ar[r] \ar[d]^{\tilde\alpha_1} & \Sigma \refinedX{t}{0} \ar@{-->}@/^2.5pc/[6,-1]^\exists \\
& & \refinedY{t}{1}            \ar[d]^{\tilde\alpha_2} \\
& & \vdots                     \ar[d]^{\tilde\alpha_{n-1}} \\
& & \refinedY{t}{n-1}          \ar[d]^{\alpha_n} \\
& & (\refinedY{t}{n-1})_{\sY_n} \ar[d]^{\pi_n} \\
& & w                          \ar[d]^(0.3){f_0} \\
& & \naiveY{w}{0}
} \]

The broken arrow labelled 0 vanishes by construction of $w=\waste{t}{n}$. Therefore, the broken arrow labelled $\exists$ exists. However, this map is zero because of $\Sigma\refinedX{t}{0}\in\sX_0$ and $\naiveY{w}{0}\in\sY_0$. This means that the composition of all the vertical arrows is zero. We will set up a similar diagram by rotating one piece of the top most vertical arrow into a distinguished triangle (recall $\tilde\alpha_1=p_1\alpha_1$):
\[ \xymatrix@R=4ex{
    (\refinedY{t}{0})_{\sX_1}                \ar[r]  & \refinedY{t}{0} \ar[r]^{\alpha_1} \ar@{-->}@/_2.2pc/[5,1]_0 &
    (\refinedY{t}{0})_{\sY_1}    \ar[d]^{p_1} \ar[r]  & \Sigma (\refinedY{t}{0})_{\sX_1} \ar@{-->}@/^2.5pc/[6,-1]^\exists \\
& & \refinedY{t}{1}             \ar[d]^{\tilde\alpha_{n-1}\cdots\tilde\alpha_2} \\
& & \refinedY{t}{n-1}           \ar[d]^{\alpha_n} \\
& & (\refinedY{t}{n-1})_{\sY_n}  \ar[d]^{\pi_n} \\
& & w                         \ar[d]^(0.4){f_0} \\
& & \naiveY{w}{0}               \ar[d] \\
& & \naiveY{w}{1}                       
} \]
The left-hand broken arrow is 0 from the last diagram. Reasoning as before we see that the right-hand broken arrow exists and, as a map from $\sX_1$ to $\sY_1$, it is 0. So again, the composition of all vertical maps vanishes. Because this composition starts with a projection $p_1$, even the reduced composition $f_1\pi_n\alpha_n\tilde\alpha_{n-1}\cdots\tilde\alpha_2=0$. (Recall that the composition of the two bottom arrows is $f_1$.)

We can now iterate these two diagrams, by rotating the map $\alpha_2$ of the vertical composition into the horizontal triangle and appending the vertical chain by the map $\naiveY{w}{1}\to\naiveY{w}{2}$. In the next run, we find
 $f_2\pi_n\alpha_n\tilde\alpha_{n-2}\cdots\tilde\alpha_3 p_2=0$.
Again, we can strip off $p_2$ from this relation. Eventually, we arrive at $f_n\pi_n=0$, and it follows that the map $w\to\naiveY{w}{n}$ is 0 as required, since $\pi_n$ is a projection. 
\end{proof}

We end this section with a lemma relating terms of the refined algorithm when one starts at different points, and a lemma showing that the refined truncation algorithm commutes with direct sums.

\begin{lemma} \label{lem:summands}
Let $t$ be an object of $\sT$ and write $t_n = \refinedY{t}{n}$ for $n\geq -1$, where $t_{-1}=t$. Suppose that $k \equiv 1 \bmod d$.
Then the object $\refinedY{t}{k+n}$ is a direct summand of  
$\refinedY{t_k}{n}$ for all $n \geq 0$.
\end{lemma}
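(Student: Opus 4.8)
The plan is to argue by induction on $n$, in fact establishing the stronger statement that for each $n\ge 0$ there is a split monomorphism $\iota_n\colon\refinedY{t}{k+n}\to\refinedY{t_k}{n}$ intertwining the canonical maps of the two runs, i.e.\ $\iota_n\circ\alpha_{k+n}=\alpha'_n\circ\alpha_k$, where $\alpha_m\colon t\to\refinedY{t}{m}$ is the structure map of the refined algorithm run on $t$, and $\alpha'_m\colon t_k\to\refinedY{t_k}{m}$ the structure map of the run on $t_k$. (The right-hand side makes sense since, by construction, each $\alpha_m$ factors through $\alpha_{m-1}$, hence through $\alpha_k\colon t\to\refinedY{t}{k}=t_k$.) Propagating this intertwining is essential, because the stripping step that distinguishes the refined algorithm from the na\"ive one is \emph{not} functorial: which summands are discarded depends on the source object, and this source is $t$ in one run and $t_k$ in the other.

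Two observations drive the induction. First, left truncation preserves direct-summand inclusions, naturally: for a t-structure $(\sX_i,\sY_i)$, a split monomorphism $a\hookrightarrow b$ with retraction $r$ induces a split monomorphism $a_{\sY_i}\to b_{\sY_i}$ with retraction $r_{\sY_i}$, since left truncation is additive and preserves identities, and it is compatible with maps out of a fixed object by naturality. Second, stripping is monotone under precomposition: if a summand of a left truncation receives a nonzero map from $t$ and this map factors through $\alpha_k\colon t\to t_k$, then the summand also receives a nonzero map from $t_k$; hence, for a left truncation occurring in both runs, the summand retained in the $t$-run is (a copy of) a summand of the one retained in the $t_k$-run. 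Both reduce to elementary manipulations with Krull--Schmidt decompositions.

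The base case $n=0$ is where $k\equiv 1\bmod d$ is used: it guarantees that the two runs apply the same cyclic sequence of t-structures from the restart point onward, and that the first truncation performed in the run on $t_k$ is vacuous because $\refinedY{t}{k}$ already lies in the co-aisle $\sY_{k\bmod d}=\sY_1$; thus $\refinedY{t_k}{0}=\refinedY{t}{k}$ and one may take $\iota_0$ to be the identity, which trivially satisfies $\iota_0\circ\alpha_k=\alpha'_0\circ\alpha_k$. For the inductive step, assume $\iota_{n-1}$ with $\iota_{n-1}\circ\alpha_{k+n-1}=\alpha'_{n-1}\circ\alpha_k$. The two runs now left-truncate with respect to one and the same t-structure $(\sX_j,\sY_j)$; by the first observation we obtain a compatible split monomorphism $(\refinedY{t}{k+n-1})_{\sY_j}\hookrightarrow(\refinedY{t_k}{n-1})_{\sY_j}$, and naturality together with the inductive hypothesis shows the composite map into this truncation from $t$ is that from $t_k$ precomposed with $\alpha_k$. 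The second observation then places $\refinedY{t}{k+n}$ as a summand of $\refinedY{t_k}{n}$ compatibly with the structure maps, yielding $\iota_n$ and closing the induction.

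The principal obstacle is exactly this bookkeeping: because stripping is source-dependent, a summand inclusion cannot simply be transported, so the intertwining $\iota_n\circ\alpha_{k+n}=\alpha'_n\circ\alpha_k$ must be carried throughout; and one must fix compatible Krull--Schmidt decompositions of the truncations involved so that ``summand of a summand'' is unambiguous. Matching the cyclic t-structure indices of the two runs, which is what $k\equiv 1\bmod d$ supplies, is the remaining point requiring care.
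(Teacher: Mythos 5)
Your proposal follows essentially the same route as the paper: induct on $n$, truncate the direct sum decomposition $\refinedY{t_k}{n}\cong\refinedY{t}{k+n}\oplus b_n$ summand-wise (additivity of left truncation), and then use that a nonzero composite out of $t$ factoring through $t_k$ forces the map out of $t_k$ to be nonzero, so every summand retained by the run on $t$ is also retained by the run on $t_k$. The intertwining $\iota_n\circ\alpha_{k+n}=\alpha'_n\circ\alpha_k$ that you carry through the induction is precisely the compatibility that the paper's phrase ``by definition, non-zero to the summands of $\refinedY{t}{k+n+1}$'' uses silently, so making it explicit is a gain in rigour rather than a different method.

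The step that does not hold as you state it is the index alignment. By the algorithm's definition, the run on $t_k$ starts by truncating with respect to $(\sX_0,\sY_0)$ and its $n$-th step uses $(\sX_n,\sY_n)$, whereas the continued run on $t$ at step $k+n$ uses $(\sX_{k+n},\sY_{k+n})$. With $k\equiv 1\bmod d$ these two cyclic sequences are offset by one, so it is not true that ``the two runs apply the same cyclic sequence of t-structures from the restart point onward''; and the initial truncation of $t_k$ is not vacuous: you correctly note $t_k\in\sY_1$, but the truncation being performed is with respect to $\sY_0$, and an object of $\sY_1$ need not even be a summand of its $\sY_0$-truncation (it could lie in $\sX_0$, making that truncation zero). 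Both your base case and your inductive step (``the two runs now left-truncate with respect to one and the same t-structure'') really need the congruence that makes the truncating co-aisles of the two runs coincide, namely $k\equiv 0\bmod d$ under the stated convention. In fairness, the paper wobbles on exactly this point: the lemma is stated with $k\equiv 1\bmod d$, it is invoked in the proof of Theorem~\ref{thm:small:extended} with $k\equiv -1\bmod d$, and the displayed identity $(\refinedY{t_k}{n})_{\sY_{n+1}}=\refinedY{t}{k+n+1}\oplus\waste{t}{k+n+1}\oplus(b_n)_{\sY_{n+1}}$ in its proof implicitly identifies the truncating t-structures of the two runs, i.e.\ works verbatim in the aligned case. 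So you have reproduced the paper's argument together with its index slip; but your specific justification of the base case (vacuity because $t_k\in\sY_1$) is incorrect as written and should be replaced by fixing the congruence (or shifting the statement) so that the two runs genuinely truncate by the same t-structure at corresponding steps, after which your induction is exactly the paper's proof.
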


\begin{proof}
For  
$n=0$, the statement is clear. Suppose $n>0$
and $\refinedY{t_k}{n} =  
\refinedY{t}{n+k} \oplus b_n$. Taking the left truncation of  
$\refinedY{t_k}{n}$ gives:
\[
(\refinedY{t_k}{n})_{\sY_{n+1}} = \refinedY{t}{k+n+1} \oplus  
\waste{t}{k+n+1} \oplus (b_n)_{\sY_{n+1}} = \refinedY{t_k}{n+1} \oplus  
\waste{t_k}{n+1}.
\]
Now since the projection of the map $t\to t_k \to  
(\refinedY{t_k}{n})_{\sY_{n+1}}$ is, by definition, non-zero to the  
summands of $\refinedY{t}{k+n+1}$, it follows that the map $t_k \to  
(\refinedY{t_k}{n})_{\sY_{n+1}}$ is non-zero to the summands of  
$\refinedY{t}{k+n+1}$. Hence, the summands of $\refinedY{t}{k+n+1}$  
are in fact summands of $\refinedY{t_k}{n+1}$, as required.
\end{proof}

\begin{lemma}
The refined truncation algorithm commutes with finite direct sums, i.e.\
 $\refinedY{t \oplus t' }{n} = \refinedY{t}{n} \oplus \refinedY{t'}{n}$ and
 $\refinedX{t \oplus t' }{n} = \refinedX{t}{n} \oplus \refinedX{t'}{n}$ for all $n \in \IN$.
\end{lemma}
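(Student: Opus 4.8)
The plan is to induct on $n$, showing simultaneously that the truncation triangles for $t\oplus t'$ decompose as direct sums of those for $t$ and $t'$. For the initial step, recall that the truncation triangle with respect to a fixed t-structure $(\sX_0,\sY_0)$ is functorial, hence additive: $(t\oplus t')_{\sX_0}=t_{\sX_0}\oplus t'_{\sX_0}$ and $(t\oplus t')_{\sY_0}=t_{\sY_0}\oplus t'_{\sY_0}$, and the morphism $\alpha_0$ for $t\oplus t'$ is the direct sum of the $\alpha_0$ for $t$ and for $t'$. Since $\alpha_0$ is non-zero onto every summand of $t_{\sY_0}$ and of $t'_{\sY_0}$ separately, nothing is stripped off, so $\refinedY{t\oplus t'}{0}=\refinedY{t}{0}\oplus\refinedY{t'}{0}$ and $\refinedX{t\oplus t'}{0}=\refinedX{t}{0}\oplus\refinedX{t'}{0}$ as required.

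For the iterative step, assume $\refinedY{t\oplus t'}{n-1}=\refinedY{t}{n-1}\oplus\refinedY{t'}{n-1}$ and $\alpha_{n-1}^{t\oplus t'}=\alpha_{n-1}^t\oplus\alpha_{n-1}^{t'}$. Again using functoriality of the truncation triangle \eqref{eqn:y_n} with respect to $(\sX_n,\sY_n)$, we get $(\refinedY{t\oplus t'}{n-1})_{\sY_n}=(\refinedY{t}{n-1})_{\sY_n}\oplus(\refinedY{t'}{n-1})_{\sY_n}$ and likewise $\tilde\alpha_n^{t\oplus t'}=\tilde\alpha_n^t\oplus\tilde\alpha_n^{t'}$. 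The heart of the argument is that the Krull–Schmidt decomposition $(\refinedY{t\oplus t'}{n-1})_{\sY_n}=\refinedY{t\oplus t'}{n}\oplus\waste{t\oplus t'}{n}$ — characterised by $\alpha_n$ being non-zero onto every summand of $\refinedY{t\oplus t'}{n}$ and $\pi_n\tilde\alpha_n=0$ — is the direct sum of the corresponding decompositions for $t$ and $t'$. Indeed, a morphism $\tilde\alpha_n^t\oplus\tilde\alpha_n^{t'}$ from $t\oplus t'$ into $A\oplus A'$ (with $A=(\refinedY{t}{n-1})_{\sY_n}$, $A'=(\refinedY{t'}{n-1})_{\sY_n}$) has zero component into an indecomposable summand $B$ of $A$ if and only if the component of $\tilde\alpha_n^t$ into $B$ is zero, since there is no map from $t'$ to $B$ that could "fix" a vanishing: more precisely, the component of $\tilde\alpha_n^{t\oplus t'}$ into $B$ is the pair $(\text{component of }\tilde\alpha_n^t\text{ into }B,\ \text{component of }\tilde\alpha_n^{t'}\text{ into }B)$, which is zero iff both entries are. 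Hence the summands of $A\oplus A'$ onto which $\tilde\alpha_n^{t\oplus t'}$ is zero are exactly (those summands of $A$ onto which $\tilde\alpha_n^t$ is zero) together with (those summands of $A'$ onto which $\tilde\alpha_n^{t'}$ is zero); by uniqueness of the Krull–Schmidt splitting this yields $\waste{t\oplus t'}{n}=\waste{t}{n}\oplus\waste{t'}{n}$ and $\refinedY{t\oplus t'}{n}=\refinedY{t}{n}\oplus\refinedY{t'}{n}$, with $\alpha_n^{t\oplus t'}=\alpha_n^t\oplus\alpha_n^{t'}$. Taking cocones (which again is additive) gives $\refinedX{t\oplus t'}{n}=\refinedX{t}{n}\oplus\refinedX{t'}{n}$, completing the induction.

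The main obstacle is the step just described: one must be slightly careful that "non-zero onto all summands" is a condition that decomposes correctly over a direct sum of sources, and that the Krull–Schmidt normal form chosen in the algorithm is genuinely canonical enough for this to go through. The key point making it work is that $\Hom_{\sT}(t\oplus t',B)=\Hom_{\sT}(t,B)\oplus\Hom_{\sT}(t',B)$, so the "non-vanishing locus" of $\tilde\alpha_n$ on indecomposable summands of the target is simply the union of the non-vanishing loci for $t$ and for $t'$; there is no interference between the two source objects. Everything else is routine functoriality of t-structure truncations and additivity of cocones.
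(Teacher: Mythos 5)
Your proposal is correct and follows essentially the same route as the paper: induct on $n$, use that a direct sum of truncation triangles is the truncation triangle of the direct sum (so the maps stay diagonal), and observe that a diagonal map $t\oplus t'\to (\refinedY{t}{n-1})_{\sY_n}\oplus(\refinedY{t'}{n-1})_{\sY_n}$ is zero onto an indecomposable summand of one factor precisely when the restriction from the corresponding source object is zero, so the waste summands split accordingly. The paper's proof is just a terser version of this same argument.
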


\begin{proof}
It is clear that direct sums commute with truncations since we may take the sum of the truncation triangles for $t$ and $ t'$ to obtain a truncation triangle for $t \oplus t'$. Now suppose for an inductive argument that $\refinedY{t \oplus t' }{n} = \refinedY{t}{n} \oplus \refinedY{t'}{n}$ and the morphism $t \oplus t' \to \refinedY{t}{n} \oplus \refinedY{t'}{n}$ is diagonal. Truncating with respect to $\sY_{n+1}$, we obtain a diagonal composition morphism $t \oplus t' \to (\refinedY{t}{n})_{\sY_{n+1}} \oplus (\refinedY{t'}{n})_{\sY_{n+1}}$. The morphism from $t \oplus t'$ to a summand of $ (\refinedY{t}{n})_{\sY_{n+1}}$ (respectively $(\refinedY{t'}{n})_{\sY_{n+1}}$) is zero if and only if the restriction from $t$ (respectively $t'$) is zero. 
\end{proof}

\section{Piecewise hereditary triangulated categories}\label{sec:background}

In this section, we collect some facts about the structure of hereditary triangulated categories. The material is standard and can be found in a number of sources, for example: \cite{ASS}, \cite{ARS}, \cite{Happel}, \cite{Lenzing}, \cite{Ringel} \cite{SS1} and \cite{SS2}. Readers familiar with this material may wish to skip this section.

For the rest of the paper $\kk$ will be an algebraically closed field. Recall that an abelian category is \emph{hereditary} if it is of global dimension 0 or 1, i.e.\ the $\Ext^n$ bifunctors are zero for $n\geq2$. The following definition can be found in \cite[p. 120]{Lenzing}, for example.

\begin{definition}
A $\kk$-linear triangulated category $\sT$ is called \emph{piecewise hereditary} if it is triangle equivalent to $\Db(\sH)$, for some $\kk$-linear, Hom- and Ext-finite, hereditary abelian category $\sH$ with a tilting object.
\end{definition}

\subsection{Finite and tame domestic types}

By a theorem of Happel \cite{Happel2}, a piecewise hereditary triangulated category is triangle equivalent to $\Db(\mod{\Lambda})$ for some finite dimensional hereditary $\kk$-algebra $\Lambda$, or $\Db(\coh{\IX})$ for some `weighted projective line' $\IX$. We write $\Db(\Lambda)=\Db(\mod{\Lambda})$ and $\Db(\IX)=\Db(\coh{\IX})$.
A coarse classification of these categories, up to triangle equivalence, is as follows --- see \cite[p.\ 126]{Lenzing} for a schematic:
\begin{itemize*}
\item[\emph{Finite type:}] $\sT\cong\Db(\kk Q_{ADE})$ for a quiver of simply-laced Dynkin type.
\item[\emph{Tame domestic type:}] $\sT\cong\Db(\kk {\tilde Q}_{ADE})\cong\Db(\IX_{\chi>0})$ for a quiver of extended simply-laced Dynkin type (a representative of the triangle equivalence class of tame hereditary algebras) or, equivalently, a weighted projective line of positive orbifold Euler characteristic.
\item[\emph{Tame tubular type:}] $\sT\cong\Db(\IX_{\chi=0})$ for an $\IX$ of zero orbifold Euler characteristic.
\item[\emph{Wild type:}] $\sT\cong\Db(\Lambda)$ for a wild hereditary algebra $\Lambda$, or $\sT\cong\Db(\IX_{\chi<0})$ for an $\IX$ of negative orbifold Euler characteristic.
\end{itemize*}

In this paper we restrict our attention to piecewise hereditary triangulated categories of \emph{finite} or \emph{tame domestic} type. In particular, such a category is triangle equivalent to $\Db(\Lambda)$ for a hereditary $\kk$-algebra $\Lambda$ of finite or tame representation type.

\subsection{Structure of the module category} \label{sec:module-category}

For a general hereditary algebra $\Lambda$, the category $\mod{\Lambda}$ is a Krull-Schmidt abelian category. Its \emph{Auslander-Reiten (AR) quiver} has as vertices isomorphism classes of indecomposable $\Lambda$-modules and as arrows irreducible $\Lambda$-module homomorphisms between indecomposable $\Lambda$-modules, i.e.\ those homomorphisms which do not factor through other homomorphisms. It can be represented graphically as follows, where non-zero morphisms only exist from left to right:

\begin{center}
\begin{tikzpicture}

\draw (0,0) -- (2.4,0);
\draw (0,0.8) -- (2.4,0.8);
\draw (0,0) -- (0, 0.8);
\draw [decorate,decoration=zigzag] (2.4,0) -- (2.4, 0.8);

\draw [thick,decorate,decoration={brace,mirror,raise=15pt}] (0,0) -- (2.4,0)
		node[pos=0.5,anchor=north,yshift=-0.55cm] {postprojectives};

\draw (3.5,0.4) circle (7mm);

\draw [thick,decorate,decoration={brace,mirror,raise=15pt}] (2.8,0) -- (4.2,0)
		node[pos=0.5,anchor=north,yshift=-0.55cm] {regular};

\draw (4.6,0) -- (7,0);
\draw (4.6,0.8) -- (7,0.8);
\draw [decorate,decoration=zigzag] (4.6,0) -- (4.6, 0.8);
\draw (7,0) -- (7, 0.8);

\draw [thick,decorate,decoration={brace,mirror,raise=15pt}] (4.6,0) -- (7,0)
		node[pos=0.5,anchor=north,yshift=-0.55cm] {preinjectives};

\end{tikzpicture}

\end{center}

The rectangle on the left hand side is called the \emph{postprojective} component $\cP$ containing the projective indecomposable modules. This component continues, in general, infinitely to the right. Dually, the rectangle on the right hand side is called the \emph{preinjective} component $\cI$, containing the injective indecomposable modules. The preinjective component continues, in general, infinitely, to the left. The central circle represents the so-called \emph{regular} components $\cR$, and as such, the postprojective component and the preinjective component are often referred to as the \emph{non-regular} components. Note that in the literature the postprojective component is often called the preprojective component.

The category $\mod{\Lambda}$ comes equipped with the Auslander-Reiten translate $\tau$, which is usually represented on the AR quiver as an arrow to the object immediately to the left in the AR quiver.

For $\Lambda$ of finite type, there are only finitely many indecomposable $\Lambda$-modules. In this case, there is no regular component and a unique non-regular component which is both postprojective and preinjective.

For $\Lambda$ of tame type, there are infinitely many indecomposable $\Lambda$-modules with all three components. The regular component takes the form of a $\IP^{1}$-indexed family of standard stable tubes of finite rank (see Subsection~\ref{sec:structure-tubes} for tubes).

\subsection{Structure of the bounded derived category}\label{sec:bdd-derived}

A standard lemma implies that each object of $\Db(\Lambda)$ decomposes as a direct sum of its cohomology; see \cite[Lemma I.5.2 and Corollary I.5.3]{Happel}. This means that the AR quiver of $\Db(\Lambda)$ takes the form:

\begin{center}
\begin{tikzpicture}

\draw (0,0) -- (2.4,0);
\draw (0,0.8) -- (2.4,0.8);
\draw [decorate,decoration=zigzag] (0,0) -- (0, 0.8);
\draw [decorate,decoration=zigzag] (2.4,0) -- (2.4, 0.8);
\draw (1.2,0) -- (1.2, 0.8);

\draw (3.2,0.4) circle (5mm);

\draw [thick,decorate,decoration={brace,mirror,raise=10pt}] (1.3,0) -- (5.1,0) node [pos=0.5,anchor=north,yshift=-0.55cm] {$\Sigma^{-1} \mod{\Lambda}$};

\draw (4.0,0) -- (6.4,0);
\draw (4,0.8) -- (6.4,0.8);
\draw [decorate,decoration=zigzag] (4.0,0) -- (4.0, 0.8);
\draw [decorate,decoration=zigzag] (6.4,0) -- (6.4, 0.8);
\draw (5.2,0) -- (5.2, 0.8);

\draw (7.2,0.4) circle (5mm);

\draw [thick,decorate,decoration={brace,mirror,raise=10pt}] (5.3,0) -- (9.1,0) node [pos=0.5,anchor=north,yshift=-0.55cm] {$\mod{\Lambda}$};

\draw (8.0,0) -- (10.4,0);
\draw (8.0,0.8) -- (10.4,0.8);
\draw [decorate,decoration=zigzag] (8.0,0) -- (8.0, 0.8);
\draw [decorate,decoration=zigzag] (10.4,0) -- (10.4, 0.8);
\draw (9.2,0) -- (9.2, 0.8);

\draw (11.2,0.4) circle (5mm);

\draw [thick,decorate,decoration={brace,mirror,raise=10pt}] (9.3,0) -- (13.1,0) node [pos=0.5,anchor=north,yshift=-0.55cm] {$\Sigma \mod{\Lambda}$};

\draw (12.0,0) -- (14.4,0);
\draw (12.0,0.8) -- (14.4,0.8);
\draw [decorate,decoration=zigzag] (12.0,0) -- (12.0, 0.8);
\draw [decorate,decoration=zigzag] (14.4,0) -- (14.4, 0.8);
\draw (13.2,0) -- (13.2, 0.8);

\end{tikzpicture}

\end{center}

\noindent Again morphisms go from left to right, and since for two modules $M,N\in\mod{\Lambda}$, one has $\Hom_{\Db(\Lambda)}(M,\Sigma^{n} N) \cong \Ext^{n}_{\Lambda}(M,N)$, this means that non-zero morphisms exist only from one degree to the next and not any higher. There is also an Auslander-Reiten translate on $\Db(\Lambda)$, still denoted by $\tau$.

The special structure of $\Db(\Lambda)$ means that cones and cocones are easily computed as follows: If $f \colon M \to N$ is a morphism in $\mod{\Lambda}$, then the cone of this map is $\Cone(f)=\coker(f) \oplus \Sigma \kernel(f)$.

There is a partial order on the indecomposable objects of the non-regular components of $\Db(\Lambda)$, namely, $x \leq y$ if and only if there is a chain of arrows in the AR quiver of $\Db(\Lambda)$ between the vertex representing the isomorphism class of $x$ to the vertex representing that of $y$.

\subsection{The structure of the regular component --- tubes} \label{sec:structure-tubes}

We list some pertinent facts about standard stable tubes; see \cite[\S X.2]{SS1} for details: Each such tube $\cT$ is uniquely determined by a natural number, its \emph{rank}. Note that the adjective `standard' here means that all the morphisms in $\cT$ can be read off from its AR quiver, and `stable' means that none of the objects of $\cT$ are projective or injective.
Such tubes are hereditary, $\kk$-linear, abelian categories with finite-dimensional $\Hom$ and $\Ext^1$ spaces. Furthermore, they are uniserial, i.e.\ every indecomposable object has a unique composition series. Therefore, every indecomposable object $t$ of the tube has a (composition) \emph{length} $\len{t}$. The objects of length 1, i.e.\ those with trivial composition series, are the so-called \emph{quasi-simple} objects $s_1,s_2,\ldots,s_\tuberk$, where $\tuberk$ is the rank of the tube. The terminology `quasi-simple' stems from the fact that the $s_i$ are simple as objects of the tube but they will generally not be simple in a larger abelian category containing the tube.

The quasi-simple objects form what is called the \emph{mouth} of the tube. In particular, we can describe indecomposables by listing the quasi-simple constituents of the composition series from top to socle. Also, being uniserial makes the category extension closed. Each indecomposable object $t$ determines the following three full subcategories of $\sT$:

\begin{itemize*}
\item[$\sR(t)$,] the \emph{ray} at $t$, whose objects all have the same quasi-simple socle, 
\item[$\sC(t)$,] the \emph{coray} at $t$, whose objects all have the same quasi-simple top, 
\item[$\sW(t)$,] the \emph{wing} below $t$, consisting of all indecomposables of length less than or equal to $\len{t}$ and trapped between the ray $\sR(t)$ and the coray $\sC(t)$; we assume that $\ell(t)$ is less than the rank of the tube;
\end{itemize*}
see \cite[Section 3.1]{Baur-Buan-Marsh} or \cite[Section X.1]{SS1}, for instance. Figure~\ref{fig:tube} shows a tube.

\begin{figure}
%
%
\setlength{\fboxsep}{-0.5ex}                                
\newcommand{\ubox}[1]{ \boxed{ \! #1 \! } }
\newcommand{\usize}{\scriptstyle}                           
\newlength{\unl}
\setlength{\unl}{-1.1ex}                                    
\newcommand{\uarDR}{{ \POS[];[dr]**\dir{-} ?> *\dir{>} }}   
\newcommand{\uarUR}{{ \POS[];[ur]**\dir{-} ?> *\dir{>} }}
\newcommand{\uarLL}{{ \POS[];[rr]**\dir{--} ?< *\dir{<} }}  
\newcommand{\uW}[4]{{  \ubox{\begin{array}{c} \usize #1 \\[\unl] 
                                              \usize #2 \\[\unl] 
                                              \usize #3 \\[\unl]
                                              \usize #4          \end{array}} \uarLL \uarUR \uarDR }} 
\newcommand{\uZ}[3]{{  \ubox{\begin{array}{c} \usize #1 \\[\unl]
                                              \usize #2 \\[\unl] 
                                              \usize #3          \end{array}} \uarLL \uarUR \uarDR }} 
\newcommand{\uY}[2]{{  \ubox{\begin{array}{c} \usize #1 \\[\unl] 
                                              \usize #2          \end{array}} \uarLL \uarUR \uarDR }} 
\newcommand{\uX}[1]{{  \ubox{\begin{array}{c} \usize #1                                       \end{array}} \uarLL \uarUR        }}
\newcommand{\uWW}[4]{{ \ubox{\begin{array}{c} \usize #1 \\[\unl] 
                                              \usize #2 \\[\unl] 
                                              \usize #3 \\[\unl]
                                              \usize #4          \end{array}} \uarUR \uarDR }} 
\newcommand{\uZZ}[3]{{ \ubox{\begin{array}{c} \usize #1 \\[\unl] \usize #2 \\[\unl] \usize #3 \end{array}}                      }}
\newcommand{\uYY}[2]{{ \ubox{\begin{array}{c} \usize #1 \\[\unl] \usize #2                    \end{array}}        \uarUR \uarDR }}
\newcommand{\uXX}[1]{{ \ubox{\begin{array}{c} \usize #1                                       \end{array}}                      }}
\newcommand{\uXx}[1]{{ \ubox{\begin{array}{c} \usize #1                                       \end{array}} \uarLL \uarUR        }}
\newcommand{\uvdots}{ \vdots \uarDR }
\begin{minipage}{0.65\textwidth}
\hspace*{-1em} 
$\xymatrix@M=0.1em@H=0.0ex@W=0.4ex@!0{
\uvdots    &         & \uvdots   &         & \uvdots    &         & \uvdots   &         & \uvdots    &          & \vdots     \\
           & \uW C B A E &       & \uW D C B A &        & \uW E D C B &       & \uW A E D C &        & \uWW B A E D &        \\
\uZ  B A E &         & \uZ C B A &         & \uZ D C B  &         & \uZ E D C &         & \uZ A E D  &          & \uZZ B A E \\
           & \uY B A &           & \uY C B &            & \uY D C &           & \uY E D &            & \uYY A E &            \\
\uXx A     &         & \uX B     &         & \uX C      &         & \uX D     &         & \uX E      &          & \uXX A
} $
\end{minipage}
\hspace{-0.1\textwidth} 
\begin{minipage}{0.375\textwidth}
\includegraphics[width=1.0\textwidth]{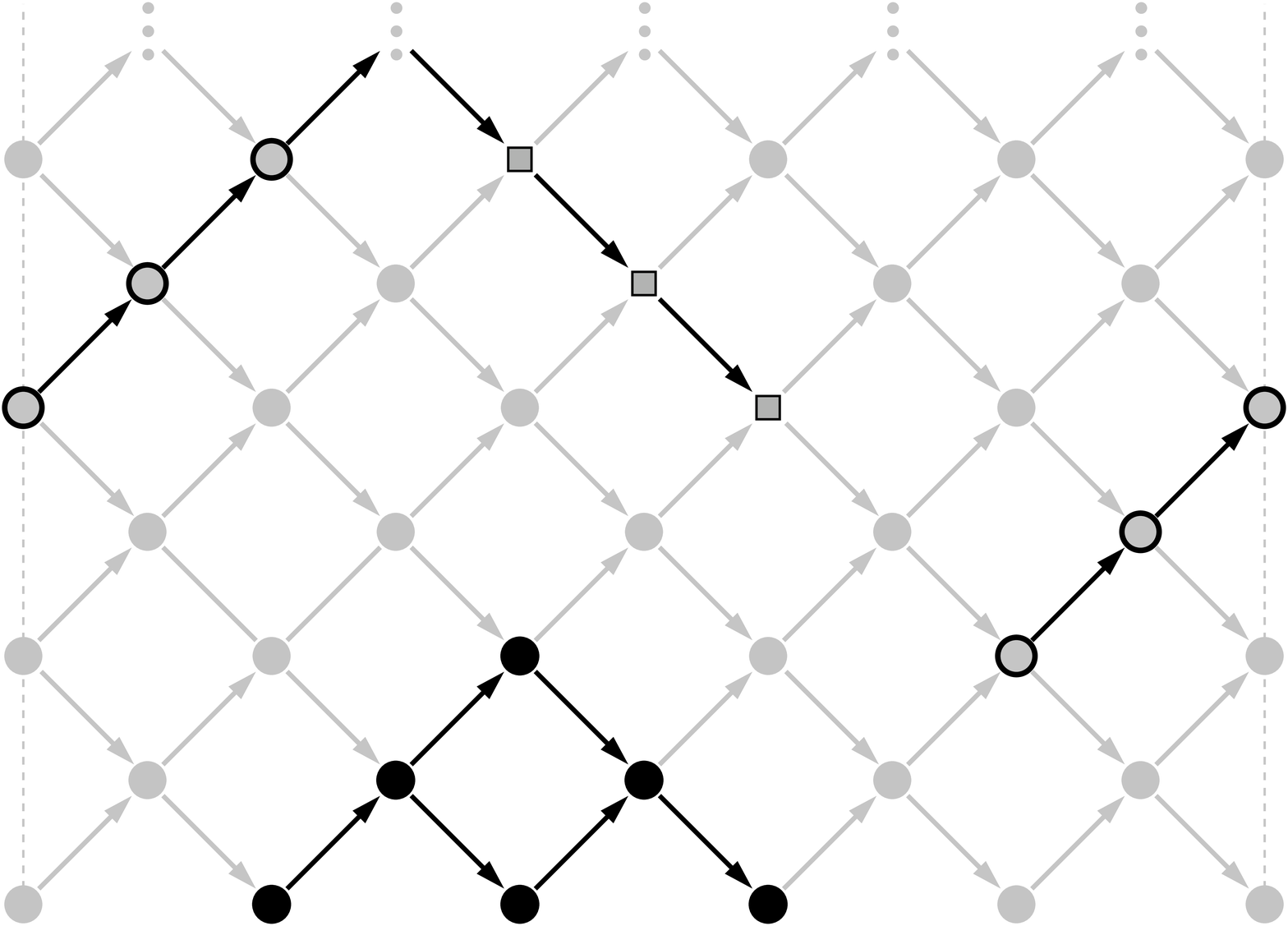} 
\end{minipage}
\caption[A standard stable tube]{ \label{fig:tube} \small
   A standard stable tube of rank 5. The outer columns are identified. \newline
   Left: composition series for indecomposable objects of length up to four. \newline
   Right: a wing (\dotspace\blackdot), part of a ray (\dotspace\rimdot), part of a coray (\dotspace\greyboxdot).
}
\end{figure}

In the case of the derived category $\Db(\Lambda)$ of a tame hereditary algebra $\Lambda$, an indecomposable object of a tube $\cT$ of rank $\tuberk$ of length $\geq\tuberk$ admits a non-zero morphism \emph{from} any object in the non-regular component immediately to the left of $\cT$ in the AR quiver of $\Db(\Lambda)$ and a non-zero morphism \emph{to} any object in the non-regular component immediately to the right of $\cT$ in the AR quiver of $\Db(\Lambda)$. A reference for this general structure can be found in \cite[Section VIII.2]{ASS} in the case of $\mod{\Lambda}$, the structure for $\Db(\Lambda)$ can be deduced from this and the hereditary condition.

\subsection{Homomorphisms and extensions in tubes}\label{sec:hom-ext-in-tubes}

In subsequent sections it will be important to understand homomorphisms and extensions between indecomposable objects sitting in some finite wing at the bottom of a tube and the remaining indecomposable objects in the tube. The uniseriality of tubes makes computation of the dimension of Hom and Ext spaces very easy.

Let $\cT$ be a standard stable tube of rank $\tuberk$. Let $t\in\cT$ be an indecomposable object of length, $l:=\len{t}<\tuberk$. We want to classify homomorphisms to and from $t$ and extensions starting and ending at $t$.

On homomorphisms: If the top of $t$ maps onto the bottom of $t'$ then there is a map $t \to t'$. In particular, this implies $\Hom(t,t')=0$ for $t' \in \sW(t) \setminus \sC(t)$. There are also no maps from $t$ to elements of $\sR(\tau t), \sR(\tau^2 t), \ldots, \sR(\tau^{\tuberk-l} t)$, where $\tau$ is the AR translation. This gives the half `Hom-hammock' shown in Figure~\ref{fig:hammocks}. The other half of the Hom-hammock consisting of maps to $t$ can be constructed similarly. Note that, since $l<\tuberk$, all non-zero Hom-spaces involving $t$ are one-dimensional. The Ext-hammocks can be computed from the Hom-hammocks using the Auslander-Reiten formula: $\Ext^1_{\Lambda}(M,N) \cong D\Hom_{\Lambda}(N,\tau M)$, where $D(\cdot)$ denotes the $\kk$-dual. See Figure~\ref{fig:hammocks} for an example.

\begin{figure} 
\centering
\includegraphics[width=0.47\textwidth]{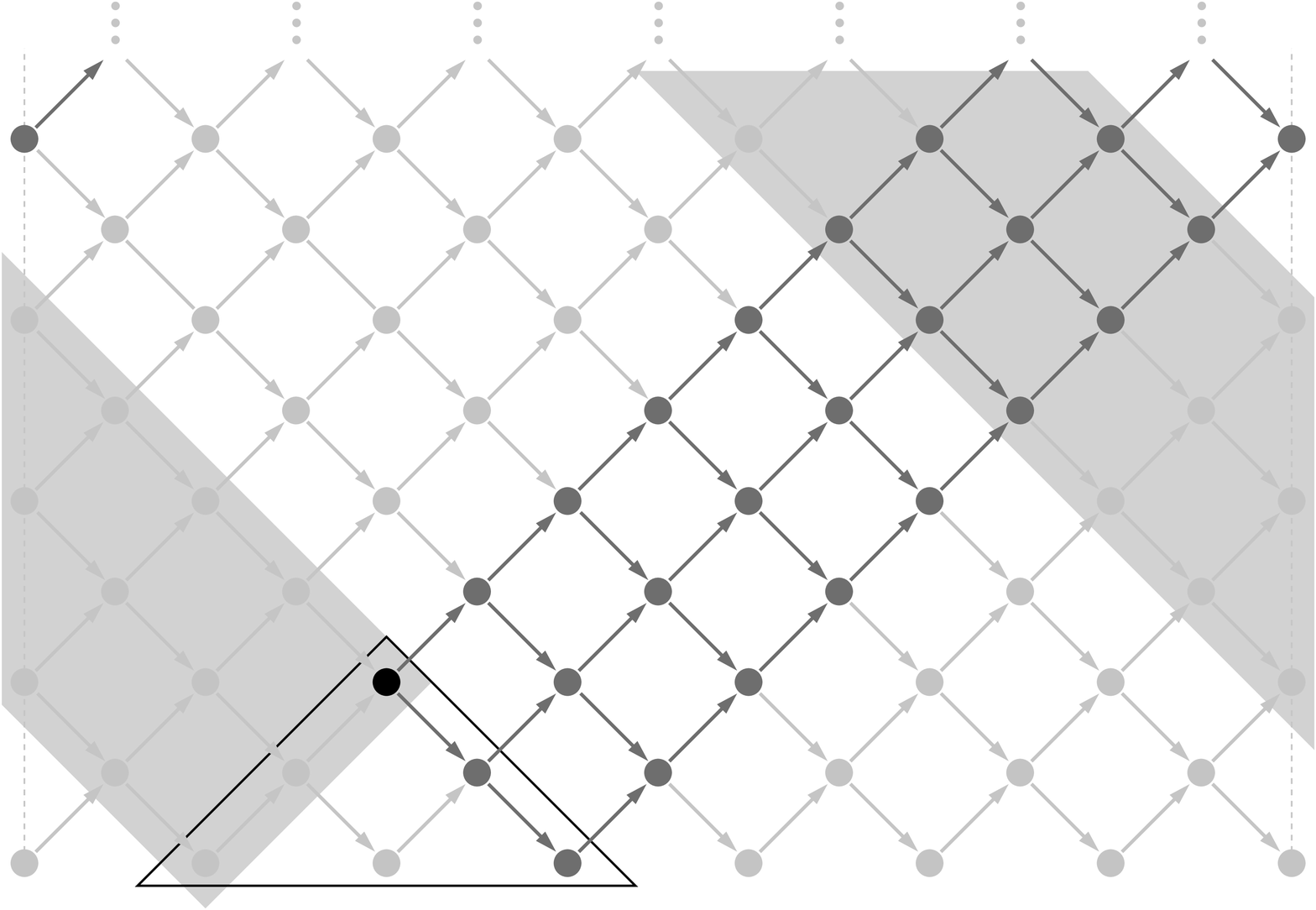} \hspace{0.04\textwidth}
\includegraphics[width=0.47\textwidth]{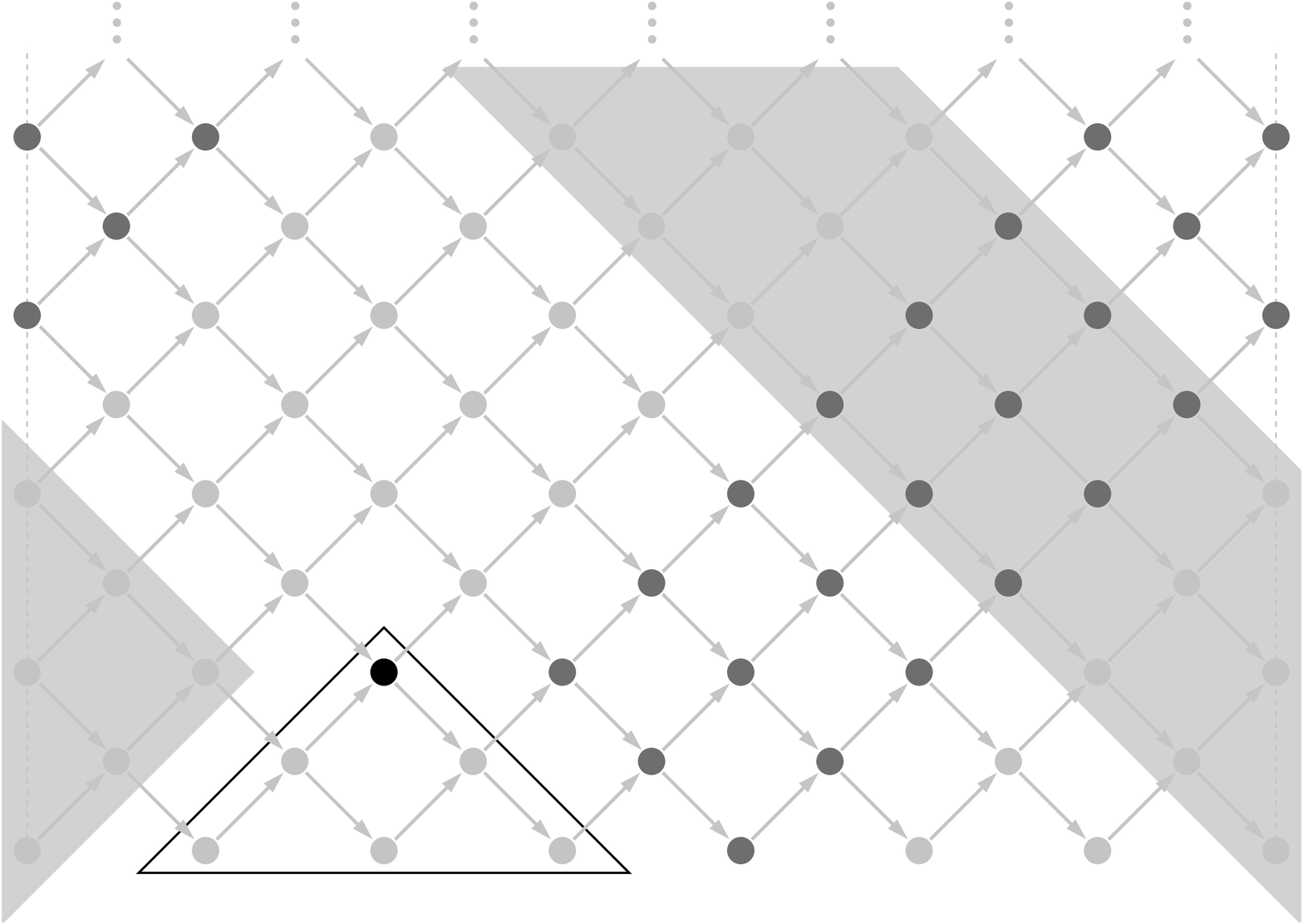}
\caption[Hom and Ext in tubes]{ \label{fig:hammocks} \small
   A tube of rank 7 and Hom/Ext vanishing for the object \dotspace\blackdot. \\
   $\Hom(\blackdot,\darkgreydot)\neq0$, $\Hom(\backgrounddot,\blackdot)\neq0$, \hfill
   $\Ext^1(\backgrounddot,\blackdot)\neq0$, $\Ext^1(\blackdot,\darkgreydot)\neq0$. \\
}
\end{figure}

It is useful to know what the indecomposable summands in an extension are. This can be done using the graphical calculus of \cite{Baur-Buan-Marsh}. In particular, given two objects $t,t'\in\cT$ (again with $t$ of length $l<\tuberk$), then an extension starting at $t'$ and ending at $t$ has the form 
$0 \to t' \to e_1 \oplus e_2 \to t \to 0$,
with $e_1$ and $e_2$ indecomposable objects, or zero. The objects $e_1$ and $e_2$ can be computed as follows: $e_1$ is the object in $\sC(t)\cap\sR(t')$ of shortest length, but longer than $t'$. Similarly, $e_2$ is the object in $\sR(t)\cap\sC(t')$ of longest length, but shorter than $t'$. We indicate an example computation in Figure~\ref{fig:tube-Lemma} on page \pageref{fig:tube-Lemma} in the proof of Lemma~\ref{lem:summands-of-v}.


\section{The Dynkin case and a piecewise tame hereditary example} \label{sec:dynkin}

\noindent
For finite representation type, the situation is as good as possible:

\begin{proposition} \label{prop:repfin}
Let $\sT$ be a piecewise hereditary triangulated category of finite type and $(\sX_i,\sY_i)_{i\in I}$ a finite set of t-structures on $\sT$. Then the refined truncation algorithm terminates for all objects of $\sT$.
\end{proposition}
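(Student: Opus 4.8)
The plan is to exploit the finiteness of $\ind(\sT)$ together with the non-vanishing property built into the refined algorithm. Since $\sT$ is of finite type, $\ind(\sT)$ is a finite set; fix $t\in\sT$. By the lemma showing the refined algorithm commutes with finite direct sums, we may assume $t$ is indecomposable. The key observation is that for each $n$, the morphism $\alpha_n\colon t\to\refinedY{t}{n}$ is non-zero onto every indecomposable summand of $\refinedY{t}{n}$; in particular, $\Hom_\sT(t,s)\neq 0$ for every indecomposable summand $s$ of $\refinedY{t}{n}$. Thus every summand of every $\refinedY{t}{n}$ lies in the finite set $\mathcal{S}(t):=\{s\in\ind(\sT)\mid \Hom_\sT(t,s)\neq 0\}$. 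Moreover, since $\Hom_\sT(t,-)$ is finite-dimensional and $\alpha_n\neq 0$ on each summand, one gets a uniform bound $\dim_\kk\Hom_\sT(t,\refinedY{t}{n})\leq \dim_\kk\Hom_\sT(t,t)$ coming from the triangle $\refinedX{t}{n}\to t\to\refinedY{t}{n}\to\Sigma\refinedX{t}{n}$ (the map $\Hom(t,t)\to\Hom(t,\refinedY{t}{n})$ induced by $\alpha_n$ is surjective onto the image of $\alpha_n\circ -$, but more usefully the multiplicities of summands of $\refinedY{t}{n}$ are controlled). Hence the objects $\refinedY{t}{n}$ range over a \emph{finite} set of isomorphism classes of objects of $\sT$.

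Next I would show the sequence is eventually constant. The iterative step factors as: truncate $\refinedY{t}{n-1}$ with respect to $(\sX_{n},\sY_{n})$ to get the left truncation $(\refinedY{t}{n-1})_{\sY_{n}}$, then strip off the summands $\waste{t}{n}$ to which the composite $t\to(\refinedY{t}{n-1})_{\sY_{n}}$ is zero. I claim that once $\refinedY{t}{n-1}$ already lies in $\sY^I=\bigcap_i\sY_i$, the algorithm stabilises: truncating an object of $\sY_{n}$ with respect to $(\sX_{n},\sY_{n})$ returns the object itself, so $(\refinedY{t}{n-1})_{\sY_{n}}=\refinedY{t}{n-1}$, the waste is zero, and $\refinedY{t}{n}=\refinedY{t}{n-1}$. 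So it suffices to show $\refinedY{t}{n}\in\sY^I$ for some $n$. Since there are only finitely many isomorphism classes among the $\refinedY{t}{n}$, some class repeats: $\refinedY{t}{n}\cong\refinedY{t}{m}$ for some $n<m$ with $m\equiv n\bmod d$. Then, running through one full cycle of $d$ truncations takes $\refinedY{t}{n}$ back to an isomorphic object while left-truncating successively with respect to $(\sX_{n+1},\sY_{n+1}),\ldots,(\sX_{n+d},\sY_{n+d})$; I would argue that an object fixed (up to isomorphism) by this cycle of truncations, each of which can only ``move the object to the right'' in the appropriate sense, must already lie in each $\sY_i$. Here is where Lemma~\ref{lem:summands} enters: it gives that $\refinedY{t}{k+n}$ is a summand of $\refinedY{t_k}{n}$, letting me compare the tail of the sequence started at $t$ with a fresh run started at $t_k=\refinedY{t}{k}$, and conclude the summands cannot keep changing.

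The main obstacle, I expect, is making precise the ``moves to the right / cannot return'' argument that upgrades a repeated isomorphism class into genuine membership in $\sY^I$. One clean way: note $\Hom_\sT(t,\refinedY{t}{n})\neq 0$ decomposes as a sum over summands, and track the partial order on $\ind(\sT)$ (available on the non-regular component, and here the \emph{whole} AR quiver is non-regular since $\sT$ is of finite type). Each left truncation with respect to $(\sX_n,\sY_n)$ replaces summands by summands that are ``no smaller'' in a suitable preorder refining this partial order combined with cohomological degree; since $\ind(\sT)$ is finite this preorder has no infinite strictly increasing chains, forcing stabilisation, and a stabilised object must be annihilated by every $\sX_i$-truncation, i.e.\ lie in $\sY^I$. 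Alternatively, and perhaps more robustly, I would avoid the order entirely: use that the finitely many $\refinedY{t}{n}$ together with the surjections-on-summands property force the \emph{total length} (number of indecomposable summands counted with multiplicity) to be eventually non-increasing along each residue class mod $d$, hence eventually constant, and then a constant-length left truncation with respect to $(\sX_n,\sY_n)$ that introduces no waste must be the identity, giving $\refinedY{t}{n}\in\sY_n$ for all large $n$ in every residue class, i.e.\ $\refinedY{t}{n}\in\sY^I$ and the algorithm terminates.
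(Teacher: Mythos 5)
Your opening premise is false: for $\sT$ of finite type, $\ind(\sT)$ is \emph{not} a finite set --- $\Db(\kk Q)$ for $Q$ Dynkin contains all shifts $\Sigma^m M$ of the finitely many indecomposable modules, so only the number of indecomposables \emph{in each degree} $\Sigma^m\sH$ is finite. Your set $\mathcal{S}(t)=\{s\in\ind(\sT)\mid\Hom_\sT(t,s)\neq0\}$ is indeed finite, but this requires the hereditary hypothesis ($\Hom_\sT(\sH,\Sigma^m\sH)=0$ unless $m\in\{0,1\}$, for $t$ in a heart $\sH$) together with per-degree finiteness; you never invoke heredity, yet it is the essential ingredient. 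More seriously, your claim that the $\refinedY{t}{n}$ range over finitely many isomorphism classes is not established: the bound $\dim\Hom_\sT(t,\refinedY{t}{n})\leq\dim\Hom_\sT(t,t)$ does not follow from the triangle $\refinedX{t}{n}\to t\to\refinedY{t}{n}\to\Sigma\refinedX{t}{n}$, because the next term $\Hom_\sT(t,\Sigma\refinedX{t}{n})$ has no reason to vanish ($t$ need not lie in $\sY^I$); and non-vanishing of $\alpha_n$ onto every summand gives no control on multiplicities, since the components of $\alpha_n$ to different copies of the same indecomposable may be proportional.

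The decisive step --- upgrading a repeated isomorphism class (or a stabilised summand count) to $\refinedY{t}{n}\in\sY^I$ --- is exactly what is missing, as you yourself note, and both of your sketches have holes: the number of indecomposable summands is not monotone under truncation (truncating an indecomposable can produce several summands, so the ``total length'' need not be eventually non-increasing), and ``no waste plus equal length implies the truncation is the identity'' would require $(\refinedY{t}{n-1})_{\sX_n}=0$, which does not follow from counting summands; the preorder variant again leans on the false finiteness of $\ind(\sT)$. For comparison, the paper's argument bypasses all of this: if the algorithm fails to terminate for an indecomposable $t\in\sH$, one extracts a sequence of indecomposable summands of the $\refinedY{t}{n}$ that is strictly increasing in the AR partial order; since each degree contains only finitely many indecomposables, these summands eventually lie in $\Sigma^m\sH$ with $m\geq2$, where $\Hom_\sT(t,-)=0$ by heredity, contradicting the built-in property that $t$ maps non-trivially onto every summand. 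If you wish to rescue your pigeonhole strategy you must both bound multiplicities and prove the ``cannot return after a full cycle'' claim; the heredity vanishing is the natural tool for both, at which point the paper's direct argument is shorter.
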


\begin{proof}
Choose any indecomposable object $t\in\sT$. Without loss of generality, we may assume it lies in the heart $\sH$ of some standard t-structure in $\sT$. Suppose that the refined algorithm does not terminate. In this case, there exists a sequence $(t_n)$ of indecomposable summands of the $(\refinedY{t}{n})$, which is strictly increasing with respect to the partial order given by the AR quiver of $\sT$. Now, since the number of indecomposable objects in each degree is finite, $t_n\in\Sigma^m \sH$ for some $m\geq 2$ and $n$ large enough. Since $\Hom_{\sT}(\sH,\Sigma^m \sH)=0$ for all $m\geq 2$ by the hereditary property, we have that $\Hom_{\sT}(t,t_n)=0$. This contradicts the fact that $t_n$ is a summand of an object in the sequence $(\refinedY{t}{n})$. Hence, the refined algorithm terminates.
\end{proof}

Unfortunately, termination of the refined truncation algorithm does not always occur; this is shown in the following example.

\begin{example} \label{ex:no-t-structure}
Let $\sT = \Db(\kk\tilde{A}_{2,2})$, the bounded derived category of the path algebra of the extended Dynkin quiver $\tilde{A}_{2,2}$. This algebra is tame hereditary. It is also a canonical algebra and triangle equivalent to a geometric object, a stacky projective line with two points of isotropy of order 2. A concrete realisation is given by the quotient (Deligne-Mumford) stack $[\IP^1/G]$ where $G=\IZ/2\IZ$ acts with two fixed points, $0$ and $\infty$. Over $\kk=\IC$, one can think of the rotation of the sphere $S^2=\IP^1_\IC$ by the angle $\pi$, fixing the two poles. A well-known model for this stack is the `weighted projective line' $\IX(2,2)$ of Geigle and Lenzing \cite{Geigle-Lenzing}.
The abelian category $\coh(\IX(2,2))$, or rather its AR quiver, consists of a regular component $\cR$ to the right of a non-regular component $\cN$, where  $\cR$ is made up of standard stable tubes $\cT_\lambda$ indexed by $\lambda\in\IP^1$. All but the tubes $\cT_0$ and $\cT_\infty$ are homogeneous, i.e.\ have rank 1. The tubes $\cT_0$ and $\cT_\infty$ are both of rank 2. The AR quiver is depicted in Figure~\ref{fig:X(2,2)_AR-quiver}.

\begin{figure}
\center
\includegraphics[width=0.7\textwidth]{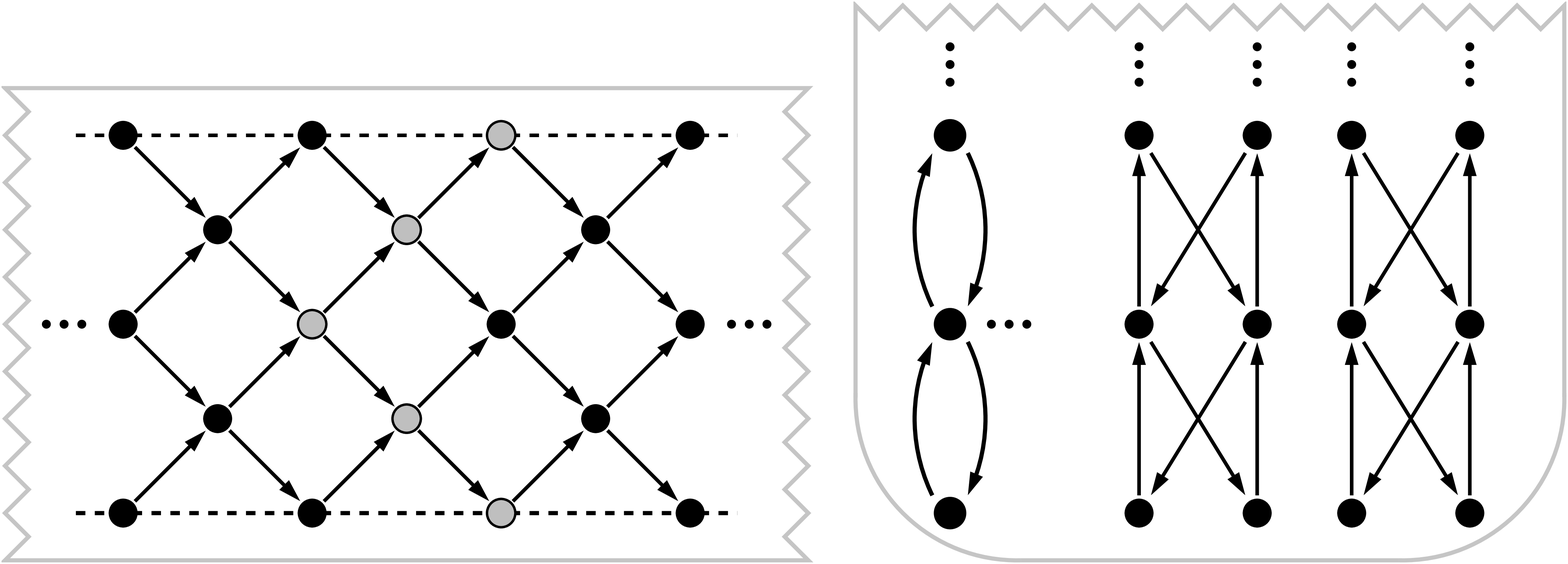}
\caption[The Auslander-Reiten quiver of $\coh(\IX(2,2))$.]{ \label{fig:X(2,2)_AR-quiver} \small
  Auslander-Reiten quiver of $\coh(\IX(2,2))$.
  The left-hand boxed area contains the non-regular component containing the line bundles;
  top and bottom rows are identified. The marked subquiver is of type $\tilde{A}_{2,2}$.
  The right-hand area is the regular component containing all torsion sheaves; it is made up of
  tubes which are parametrised by $\IP^1=\kk\cup\{\infty\}$. The tubes for $\lambda=0,\infty$ 
  have rank 2.
}
\end{figure}

Let $s_1$ and $s_2$ denote the quasi-simple objects at the mouths of the tube $\cT_0$.
We define two t-structures in $\sT$ as follows; one of them is shown in Figure~\ref{fig:A2,2_t-structure}:
\[
\sX_p :=  \{s_p\} \cup \Sigma\cR \cup \bigcup_{i\geq2} \Sigma^i \coh(\IX(2,2)), \qquad
\sY_p := \sX_p^\perp \qquad\text{for } p=1,2.
\]

\begin{figure}
\centering
\psfrag{a}{}    
\includegraphics[width=\textwidth]{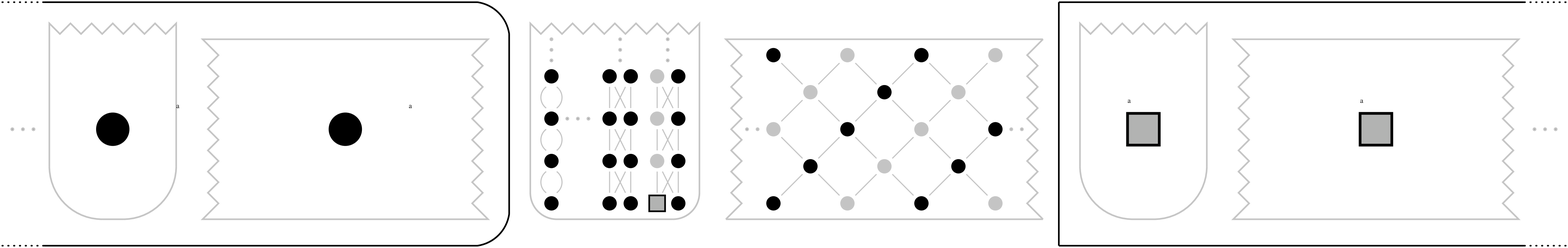}

$\hspace{0.20\textwidth} \cN       \hspace{0.15\textwidth} \cR
 \hspace{0.13\textwidth} \Sigma\cN \hspace{0.12\textwidth} \Sigma\cR
 \hfill$

\vspace{-2ex}
$\hspace{0.13\textwidth} \underbrace{\hspace{0.31\textwidth}}_{\coh(\IX(2,2))}
 \hspace{0.02\textwidth} \underbrace{\hspace{0.30\textwidth}}_{\Sigma\coh(\IX(2,2))}
 \hfill$
\caption[A t-structure on $\sT=\Db(\IX(2,2))$.]{ \label{fig:A2,2_t-structure} \small
  A t-structure on $\sT=\Db(\IX(2,2))$ defined by $\sX$ (\dotspace\boxeddot) and $\sY$ (\dotspace\blackdot);
  the remaining irreducible objects (\dotspace\greydot) belong to neither $\sX$ nor $\sY$.
}
\end{figure}

Observe that the refined algorithm doesn't terminate for this example: for instance, taking an indecomposable object $t\in\Sigma \cN$, one sees that the objects  $\refinedY{t}{n}$ simply move a finite distance to the right in the component $\Sigma \cN$ without ever terminating.
\end{example}

\begin{remark}
In Example~\ref{ex:no-t-structure}, one can compute that $\sY^I \cap \Sigma\cN = \sY_1 \cap \sY_2 \cap \Sigma\cN = 0$. Similarly, $\sX^I \cap \Sigma \cN=0$. As a consequence, no indecomposable object $t\in\Sigma\cN$ admits a truncation triangle $\tri{x}{t}{y}$ with $x\in \sX^I$ and $y\in \sY^I$, so that $(\sX^I,\sY^I)$ is not a t-structure.
\end{remark}

\begin{remark}
Example~\ref{ex:no-t-structure} gives a concrete example in which the refined algorithm does not terminate; indeed, in this case $\sod{\sX_1, \sX_2}$ is not an aisle in $\sT$. However, $\sY_1 \cap \sY_2$ is a co-aisle; so one may think that we have made the wrong choice for the averaged t-structure. 
However, one can modify this example in such a way that neither $\sod{\sX_1, \sX_2}$ is an aisle nor $\sY_1 \cap \sY_2$ a co-aisle.
\end{remark}


\section{The combinatorial criterion and Theorem~\ref{thm:small}} \label{sec:proof1}

\noindent
Before proving Theorem~\ref{thm:small}, we restate it more precisely below:

\begin{theorem} \label{thm:small:extended}
Let $\sT$ be a $\kk$-linear, piecewise tame domestic hereditary triangulated category and let $(\sX_i,\sY_i)_{i\in I}$ be a finite set of t-structures on $\sT$, where $I=\{0,\ldots,d-1\}$. Then the following are all equivalent:
\begin{enumerate}[(a)]
\item $\sX^I$ is an aisle, i.e.\ $(\sX^I,\sY^I)$ is a t-structure.
\item For any object $t$ of $\sT$, the refined algorithm terminates.
\item \label{itm:combicond} For any connected non-regular component $\cN$ of $\sT$ such that $\sY_i\cap\cN$ has an infinite strictly increasing sequence for all $i\in I$, there is a strictly increasing sequence in $\sY^I\cap\cN$.
\end{enumerate}
\end{theorem}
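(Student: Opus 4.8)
The plan is to prove the cycle of implications $(b)\Rightarrow(a)\Rightarrow(c)\Rightarrow(b)$. The implication $(b)\Rightarrow(a)$ is immediate from Theorem~\ref{thm:x_n}: if the refined algorithm terminates for every $t$, then $\sX^I$ is an aisle, since the eventually-constant value $\refinedY{t}{n}$ lies in each $\sY_i$ (it is fixed by left truncation with respect to each t-structure), hence in $\sY^I$, while $\refinedX{t}{n}\in\sX^I$, giving the truncation triangle. For $(a)\Rightarrow(c)$ I would argue contrapositively: suppose there is a connected non-regular component $\cN$ with $\sY_i\cap\cN$ containing an infinite strictly increasing sequence for every $i\in I$, but with no strictly increasing sequence in $\sY^I\cap\cN$. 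Using the structure of non-regular components recalled in Section~\ref{sec:background} (a partial order in which $\Hom$ is controlled, and the fact that an aisle and its co-aisle are closed under summands and extensions), one shows that $\sY^I\cap\cN$ is then "bounded" in $\cN$, i.e.\ contained in a finite union of corays/rays, whereas $\sX^I\cap\cN$ is correspondingly bounded in the other direction; an indecomposable $t\in\cN$ sitting strictly between these two bounded regions then admits no truncation triangle $\tri{x}{t}{y}$ with $x\in\sX^I$, $y\in\sY^I$ (any such would force, via the long exact sequence of $\Hom_\sT(-,t)$ or $\Hom_\sT(t,-)$ restricted to $\cN$, that $t$ is an extension of objects from these regions, contradicting its position). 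This mimics the remark after Example~\ref{ex:no-t-structure}.

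The substantive direction is $(c)\Rightarrow(b)$, and I expect this to be the main obstacle. Here one must show that the combinatorial condition on non-regular components forces the refined algorithm to terminate for every object $t$. By the lemma that the refined algorithm commutes with finite direct sums, it suffices to treat indecomposable $t$. The sequence of summands appearing in $\refinedY{t}{n}$ is governed, at each step, by a left truncation followed by discarding the "waste" summands $\waste{t}{n}$ onto which $t$ maps trivially; the key point is that the non-vanishing property $\alpha_n\neq0$ on every summand of $\refinedY{t}{n}$, together with the hammock descriptions of $\Hom$ and $\Ext^1$ in tubes (Section~\ref{sec:hom-ext-in-tubes}) and the analogous structure in non-regular components, severely restricts where these summands can live. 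In a regular component (a tube of rank $\tuberk$), any summand that is "too long" (length $\geq\tuberk$) receives a non-zero map from the entire adjacent non-regular component, and one shows such summands cannot persist indefinitely; summands of length $<\tuberk$ live in finitely many wings, so by a pigeonhole/monotonicity argument on the uniserial structure the tube-part of $\refinedY{t}{n}$ stabilises. The genuinely delicate case is a non-regular component $\cN$: there, the summands of $\refinedY{t}{n}$ can migrate along $\cN$, and one must use hypothesis $(c)$ to block infinite migration. The argument I would pursue: if the summands in $\cN$ did not stabilise, they would trace out an infinite strictly increasing sequence which (by the non-vanishing of $\alpha_n$ and semiorthogonality, exactly as in the remark after the Dynkin proposition) must eventually lie in $\sY^I\cap\cN$ up to shift; invoking $(c)$ in the contrapositive — i.e.\ ruling out the "bad" components — forces termination. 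Making this precise requires a careful bookkeeping of how summands in one component feed summands in the next via the cocone/extension computations $\Cone(f)=\coker(f)\oplus\Sigma\kernel(f)$, and controlling the finitely many indecomposables in each degree that can receive a non-zero map from $t$.

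The hard part will therefore be the combinatorial core of $(c)\Rightarrow(b)$: quantifying "how far" the summands of $\refinedY{t}{n}$ can travel in a non-regular component before the non-vanishing constraint $\alpha_n\neq0$ together with $\Hom_\sT(\sH,\Sigma^m\sH)=0$ for $m\geq2$ (heredity) and condition $(c)$ force stabilisation in every component and every degree simultaneously. I would organise this via a sequence of lemmas: (i) the summands of $\refinedY{t}{n}$ occupy only finitely many components and finitely many degrees, using that $\Hom_\sT(t,-)$ vanishes outside a bounded range of degrees and that only finitely many components are "hit"; (ii) within each regular component the tube-part stabilises, using uniseriality and the hammock lemmas; (iii) within each non-regular component, either the part stabilises, or one extracts an infinite strictly increasing sequence in $\sY^I\cap\cN$ contradicting $\neg(c)$ — here Lemma~\ref{lem:summands} (relating runs of the algorithm started at different points) is the technical engine that lets one pass from "summands keep appearing" to "a genuine increasing chain in $\sY^I$". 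Combining (i)–(iii) gives global termination, completing the cycle.
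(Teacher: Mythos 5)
Your skeleton for $(c)\Rightarrow(b)$ (reduce to indecomposables, prove per-component stabilisation, then use the ordering of components and $\Hom(\sH,\Sigma^m\sH)=0$ for $m\geq 2$ together with Lemma~\ref{lem:summands} to conclude) is the same as the paper's, and $(b)\Rightarrow(a)$ via Theorem~\ref{thm:x_n} is fine. But at the two places where real work is needed your mechanisms do not go through. In a tube of rank $\tuberk$ you claim that summands of $\refinedY{t}{n}$ of length $\geq\tuberk$ are excluded, or ``cannot persist'', because they receive non-zero maps from the adjacent non-regular component; receiving such maps does not prevent an object from occurring as a summand of a left truncation (it only prevents it from lying in the \emph{aisle} when the co-aisle meets the tube infinitely), and indeed such long summands do occur: the technical heart of the paper, Proposition~\ref{prop:length-bound} and the lemmas of Section~\ref{sec:bound}, is devoted to proving the bound $\tuberk\ceil{\len{t}/\tuberk}$ on their length, after which Proposition~\ref{prop:regulartermination} concludes stabilisation because long compositions of non-isomorphisms between objects of bounded length in a tube vanish. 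Without that bound your pigeonhole argument has nothing to pigeonhole. In a non-regular component $\cN$ your use of hypothesis $(c)$ is logically inverted: you want a contradiction from the migrating summands ``eventually lying in $\sY^I\cap\cN$'', but under $(c)$ an increasing sequence in $\sY^I\cap\cN$ is not contradictory -- it is the hypothesis, and the assertion that the summands eventually land in $\sY^I$ is precisely the conclusion (Lemma~\ref{lem:nonregulartermination}), not something that follows from non-vanishing of $\alpha_n$ and semiorthogonality. The paper uses $(c)$ positively: the increasing sequence in $\sY^I\cap\cN$ gives, via sincerity (Lemma~\ref{lem:nonregint}), that $\sX^I\cap\cN=0$ and that the indecomposables of $\sX^I$ in $\bigcup_k\Sigma^{-k}(\cR\cup\cN)$ are confined to wings of the inhomogeneous tubes, so only finitely many cocones $\refinedX{t}{n}$ can occur and the $\cN$-part of $\refinedY{t}{n}$ stabilises by the partial order.

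Your contrapositive sketch of $(a)\Rightarrow(c)$ also has a gap: you argue that an indecomposable $t\in\cN$ ``between'' the bounded regions $\sX^I\cap\cN$ and $\sY^I\cap\cN$ can have no truncation triangle because the triangle would exhibit $t$ as an extension of objects from those regions, but the truncation pieces of $t$ need not lie in $\cN$ at all. The paper's direct argument handles exactly this: assuming $(a)$, it shows $t_{\sX^I}$ has all its summands in the regular component $\cR$ (no summands in $\cN$ by Lemma~\ref{lem:nonregint}, none in $\Sigma^{-1}\cN$ by closure of $\sX^I$ under suspension), and then deduces that $t_{\sY^I}$ must have a non-regular summand -- using that the regular objects form a triangulated subcategory -- which is strictly above $t$ in the partial order; iterating produces the increasing sequence in $\sY^I\cap\cN$. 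Your ``long exact sequence restricted to $\cN$'' step would therefore fail, and a correct proof of this implication needs the cross-component analysis you omit.
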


Given an object $t\in\sT$, we apply the refined algorithm to obtain a sequence
\begin{equation} \label{eq:refined-sequence}
t \to \refinedY{t}{0} \to \refinedY{t}{1} \to \refinedY{t}{2} \to \cdots \to \refinedY{t}{n} \to \cdots,
\end{equation}
which we refer to as the `refined truncation sequence' for $t$. 
We shall show that any connected component of the AR quiver of $\sT$ can contain only finitely many terms of this sequence.
The proof of this fact is not difficult, but rather technical, and broken down into sections. These are then put together to prove Theorem~\ref{thm:small:extended} in Section~\ref{sec:proof-small-extended}.

\subsection{Some technical lemmas} \label{sec:technical}

In the proof of Theorem~\ref{thm:small}, direct sum decompositions will be uniquitious. Here, we collect some nice properties of truncations regarding direct summands. These formal properties are surely well-known, and their proofs follow immediately from the definition of t-structure.

Let $\sT$ be a Krull-Schmidt $\kk$-linear triangulated category.
Suppose $(\sX,\sY)$ is a t-structure in $\sT$ and an object $t\in\sT$ has truncation triangle
\begin{equation} \label{eqn:trunc1}
x_1 \oplus x_2 \xrightarrow{[f_1\, f_2]} t \too y \too \Sigma (x_1 \oplus x_2)
\end{equation}
with respect to $(\sX,\sY)$, where $x_1\neq0$ and $x_2\neq0$.
In this sitation, the next three statements assert good behaviour of the right trunction maps, the octahedral diagram associated to the direct sum, and multiplicities.

\begin{lemma} \label{lem:non-zero-summands}
In the truncation triangle \eqref{eqn:trunc1}, the maps $f_1$ and $f_2$ are non-zero.
\end{lemma}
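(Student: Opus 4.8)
The plan is to argue by contradiction, exploiting the symmetry of the statement in $x_1$ and $x_2$ to assume $f_1 = 0$. The key observation is that, under the identification $t \cong 0 \oplus t$ with $0$ a zero object, the morphism $[f_1\ f_2] = [0\ f_2]\colon x_1\oplus x_2 \to t$ is literally the direct sum of the zero morphism $x_1 \to 0$ and the morphism $f_2\colon x_2\to t$. Since a (finite) direct sum of distinguished triangles is distinguished, the triangle \eqref{eqn:trunc1} is then isomorphic to the direct sum of the trivial triangle $x_1 \xrightarrow{\,0\,} 0 \to \Sigma x_1 \xrightarrow{\id} \Sigma x_1$ and a triangle $x_2 \xrightarrow{f_2} t \to y' \to \Sigma x_2$ obtained by completing $f_2$. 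In particular $y \cong \Sigma x_1 \oplus y'$.

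Next I would feed this back into the t-structure axioms. Because $\sY$ is closed under direct summands (true in any t-structure, as recalled in Section~\ref{sec:prelim}), we get $\Sigma x_1 \in \sY$; but also $\Sigma x_1 \in \Sigma\sX \subseteq \sX$ by axiom (i). Axiom (ii) then forces $\Hom_\sT(\Sigma x_1, \Sigma x_1) = 0$, so $\id_{\Sigma x_1} = 0$, i.e.\ $\Sigma x_1 \cong 0$ and hence $x_1 \cong 0$, contradicting $x_1 \neq 0$. Running the same argument with the two summands interchanged shows $f_2 \neq 0$ as well, which completes the proof.

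There is no genuine obstacle here; the only point needing a little care is the claim that $[0\ f_2]$ decomposes as an honest direct sum of morphisms — so that its cone splits off a copy of $\Sigma x_1$ — rather than merely having one vanishing matrix entry. As an alternative that sidesteps even this, one can argue via uniqueness of truncation triangles: from $y \cong \Sigma x_1 \oplus y' \in \sY$ one deduces $y' \in \sY$, and since $x_2 \in \sX$ ($\sX$ being closed under summands) the triangle $x_2 \xrightarrow{f_2} t \to y' \to \Sigma x_2$ is then the truncation triangle of $t$ with respect to $(\sX,\sY)$; uniqueness gives $x_1 \oplus x_2 \cong x_2$, hence $x_1 = 0$, the same contradiction.
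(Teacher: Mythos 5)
Your argument is correct. Note that the paper offers no written proof of this lemma: it merely remarks before Lemma~\ref{lem:non-zero-summands} that these formal properties ``follow immediately from the definition of t-structure'', so there is no explicit argument to compare against. What you give is exactly the intended routine verification, and it is complete: if $f_1=0$, then $[0\ f_2]\colon x_1\oplus x_2\to 0\oplus t$ really is the diagonal direct sum of $x_1\to 0$ and $f_2\colon x_2\to t$, a finite direct sum of distinguished triangles is distinguished, and uniqueness of cones gives $y\cong\Sigma x_1\oplus y'$; since co-aisles are closed under direct summands (as recalled in Section~\ref{sec:prelim}), $\Sigma x_1\in\sX\cap\sY$, hence $\Hom(\Sigma x_1,\Sigma x_1)=0$ and $x_1=0$, a contradiction. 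One small remark on your alternative route: the step from $x_1\oplus x_2\cong x_2$ to $x_1=0$ uses cancellation, which is automatic in the Krull-Schmidt setting of Section~\ref{sec:technical} but not in an arbitrary additive category; your first argument avoids this entirely and is therefore slightly more general, so it is the preferable formulation.
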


\begin{lemma} \label{lem:non-zero-maps}
The truncation triangle~\eqref{eqn:trunc1} fits into the following octahedral diagram:
\[ \xymatrix@C+1em{
x_1            \ar@{=}[r]           \ar[d]_-{\iota}  & x_1        \ar[d]^-{f_1}  &             \\
x_1 \oplus x_2 \ar[r]^-{[f_1\, f_2]} \ar[d]_-{\pi}    & t    \ar[r] \ar[d]^-{h}   & y \ar@{=}[d] \\
x_2 \ar[r]_-{g}                                                                   & u \ar[r]     & y	 
} \]
where $\iota$ and $\pi$ denote the canonical inclusion and projection maps. In this octahedron, the map $g\colon x_2\rightarrow u$ is non-zero.
\end{lemma}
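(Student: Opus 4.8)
The plan is to produce the octahedron directly from the octahedral axiom applied to the factorisation $x_1 \xrightarrow{\iota} x_1\oplus x_2 \xrightarrow{[f_1\,f_2]} t$ of $f_1$, and then to recognise that the bottom row of the resulting $3\times 3$ array is itself a truncation triangle, from which non-vanishing of $g$ is immediate. First I would note that the cone of the split inclusion $\iota$ is $x_2$ (the split triangle $x_1 \to x_1\oplus x_2 \xrightarrow{\pi} x_2 \xrightarrow{0}\Sigma x_1$), the cone of $[f_1\,f_2]$ is $y$ by hypothesis~\eqref{eqn:trunc1}, and I would define $u := \Cone(f_1)$. The octahedral axiom then produces exactly the displayed commutative diagram, with the two new distinguished triangles being the middle column $x_1 \xrightarrow{f_1} t \xrightarrow{h} u \to \Sigma x_1$ and the bottom row $x_2 \xrightarrow{g} u \to y \to \Sigma x_2$, and with $\iota,\pi$ the canonical inclusion and projection as drawn.

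The crux is then the observation that $x_2 \xrightarrow{g} u \to y \to \Sigma x_2$ is a truncation triangle. Indeed, $x_1\oplus x_2 = t_{\sX}$ lies in $\sX$, which is closed under direct summands, so $x_2\in\sX$; and $y = t_{\sY}\in\sY$. Hence this triangle has left term in $\sX$ and right term in $\sY$, so by uniqueness of truncation triangles it is \emph{the} truncation triangle of $u$, giving $u_{\sX}\cong x_2$ and identifying $g$ with the right-truncation map $u_{\sX}\to u$.

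It remains to record the elementary fact that a right-truncation map $z_{\sX}\to z$ is non-zero whenever $z_{\sX}\neq 0$: applying $\Hom(a,-)$ to the truncation triangle of $z$ for $a\in\sX$, and using $\Hom(\sX,\sY)=0$ together with $\Sigma^{-1}\sY\subseteq\sY$, one gets an isomorphism $\Hom(a,z_{\sX})\xrightarrow{\sim}\Hom(a,z)$ given by composition with the truncation map; taking $a=z_{\sX}$ sends $\id_{z_{\sX}}$ to the truncation map, so if the latter vanishes then $\id_{z_{\sX}}=0$ and $z_{\sX}=0$. Applying this with $z=u$: if $g=0$ then $x_2\cong u_{\sX}=0$, contradicting $x_2\neq 0$, so $g\neq 0$. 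I do not anticipate a genuine obstacle here; the only thing to get right is spotting that the bottom row of the octahedron is a truncation triangle (so that Krull--Schmidt is in fact not even needed), after which the argument is formal. As an aside one can alternatively identify $g = h\circ f_2$ via the octahedral compatibilities and invoke Lemma~\ref{lem:non-zero-summands}, but the route above is shorter.
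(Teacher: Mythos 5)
Your proof is correct. The paper itself offers no written proof of this lemma (it is grouped with Lemma~\ref{lem:non-zero-summands} and Lemma~\ref{lem:summand-multiplicity} under the remark that these facts ``follow immediately from the definition of t-structure''), and your argument is precisely the kind of formal verification intended: apply the octahedral axiom to the factorisation $f_1=[f_1\,f_2]\circ\iota$, observe that the resulting bottom row $x_2\xrightarrow{g}u\to y\to\Sigma x_2$ has first term in $\sX$ (aisles are closed under summands) and third term in $\sY$, so it is a truncation triangle of $u$, and then use the standard isomorphism $\Hom(a,u_{\sX})\cong\Hom(a,u)$ for $a\in\sX$ to see that a right-truncation map with non-zero source cannot vanish; hence $g\neq0$ since $x_2\neq0$. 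One small caveat: your parenthetical alternative (identify $g=h\circ f_2$ and ``invoke Lemma~\ref{lem:non-zero-summands}'') is not a complete argument as stated, since $f_2\neq0$ alone does not prevent $h\circ f_2=0$; but as you rely on the main route, this does not affect the correctness of the proof.
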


\begin{lemma} \label{lem:summand-multiplicity}
If in the truncation triangle~\eqref{eqn:trunc1} $\dim \Hom_{\sT}(x_1,t)=1$ holds, then the multiplicity of $x_1$ as a summand of $x:=x_1\oplus x_2$ is one.
\end{lemma}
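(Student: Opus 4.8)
\textbf{Proof plan for Lemma~\ref{lem:summand-multiplicity}.}

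The plan is to argue by contradiction: suppose $x_1$ occurs with multiplicity at least two as a summand of $x = x_1 \oplus x_2$, so that we may write $x \cong x_1 \oplus x_1 \oplus x'$ for some (possibly zero) object $x'$. I would like to deduce from this that $\dim\Hom_{\sT}(x_1,t) \geq 2$, contradicting the hypothesis. The key is the universal property of the truncation triangle: for any object $a \in \sX$, applying $\Hom_{\sT}(a,-)$ to the triangle $x \to t \to y \to \Sigma x$ and using $\Hom_{\sT}(a,y) = 0 = \Hom_{\sT}(a,\Sigma^{-1}y)$ (since $\Sigma^{-1}y \in \sY$ and $a \in \sX$, together with axiom (ii) of a t-structure) yields an isomorphism $\Hom_{\sT}(a,x) \isom \Hom_{\sT}(a,t)$ induced by post-composition with the right truncation map $x \to t$.

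First I would set $a = x_1$. Then the isomorphism above gives $\Hom_{\sT}(x_1,x) \cong \Hom_{\sT}(x_1,t)$, so $\dim\Hom_{\sT}(x_1,x) = 1$ by hypothesis. On the other hand, $\Hom_{\sT}(x_1,x) \cong \Hom_{\sT}(x_1,x_1)^{\oplus m} \oplus \Hom_{\sT}(x_1,x')$ where $m \geq 2$ is the multiplicity. Since $x_1 \neq 0$ and $\sT$ is Krull-Schmidt, $x_1$ is a direct sum of indecomposables each with local endomorphism ring; in particular $\End_{\sT}(x_1) \neq 0$, so $\dim\Hom_{\sT}(x_1,x_1) \geq 1$, and therefore $\dim\Hom_{\sT}(x_1,x) \geq m \geq 2$. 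This contradicts $\dim\Hom_{\sT}(x_1,x) = 1$, and the lemma follows.

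The only point requiring care — and the place I would expect the main (modest) obstacle — is the vanishing $\Hom_{\sT}(x_1, \Sigma^{-1}y) = 0$ used to get the isomorphism $\Hom_{\sT}(x_1,x) \isom \Hom_{\sT}(x_1,t)$ from the long exact sequence: one needs $\Sigma^{-1}y \in \sY$, which is immediate from axiom (i) of a t-structure ($\Sigma^{-1}\sY \subseteq \sY$), and then $\Hom_{\sT}(x_1, \Sigma^{-1}y) = 0$ since $x_1 \in \sX$ and axiom (ii) applies. Everything else is routine, so I would keep the write-up short, essentially just recording the universal-property isomorphism and the multiplicity count.
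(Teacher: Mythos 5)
Your proof is correct and is essentially the argument the paper has in mind: the paper offers no explicit proof, merely remarking that these technical lemmas ``follow immediately from the definition of t-structure'', and your argument --- the isomorphism $\Hom_{\sT}(a,x)\cong\Hom_{\sT}(a,t)$ for $a\in\sX$ obtained from the long exact sequence (with $x_1\in\sX$ because aisles are closed under direct summands), followed by the dimension count $\dim\Hom_{\sT}(x_1,x)\geq 2$ when the multiplicity is at least two --- is precisely such an immediate argument.
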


\subsection{Finiteness of the refined truncation sequence in a non-regular component} \label{sec:non-regular}

In this section we show the required finiteness of the refined truncation sequence under hypothesis~(\ref{itm:combicond}) of Theorem \ref{thm:small:extended}.

\begin{lemma} \label{lem:nonregint}
If $\cN$ is a connected non-regular component of $\sT$ such that $\sY^I\cap\cN$ contains an infinite strictly increasing sequence, then $ \sX^I \cap \cN = 0$
\end{lemma}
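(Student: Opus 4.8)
\emph{Proof plan.}
The plan is to argue by contradiction, using the semi-orthogonality that holds automatically between the averaged subcategories. Suppose $\sX^I\cap\cN\neq 0$ and fix an indecomposable object $x$ in it. For every $i\in I$ we have $\sX_i\subseteq{}^\perp\sY_i\subseteq{}^\perp\sY^I$, and ${}^\perp\sY^I$ is closed under extensions and direct summands, so $\sX^I=\sod{\sX_i\mid i\in I}\subseteq{}^\perp\sY^I$; hence $\Hom_\sT(x,y)=0$ for all $y\in\sY^I$. Writing $(y_n)_{n\in\IN}$ for the given infinite strictly increasing sequence in $\sY^I\cap\cN$, this says $\Hom_\sT(x,y_n)=0$ for all $n$, and the whole task is to contradict this by producing a non-zero morphism $x\to y_n$ for $n$ large.

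First I would bring in the structure of the non-regular components of $\sT\cong\Db(\Lambda)$, $\Lambda$ tame hereditary, recalled in Section~\ref{sec:background} (see also \cite{Happel},\cite{Ringel},\cite{SS1}): up to a cohomological shift, such a component $\cN$ is a copy of the postprojective component $\cP$ of $\mod\Lambda$ glued to a copy of the preinjective component along the projectives and the (shifted) injectives; it occupies two consecutive cohomological degrees, say $m-1$ and $m$ with the postprojective part in degree $m$, and is isomorphic as a translation quiver to $\IZ\tilde\Delta$. Going forward along $\cN$ one leaves the preinjective part after finitely many steps, so an infinite strictly increasing sequence in $\cN$ eventually lies in the degree-$m$ postprojective part; and since $\cN$ has bounded width while a strictly increasing sequence cannot stay inside a finite subposet, the underlying modules leave every bounded region of $\cP$. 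Discarding finitely many terms and passing to a subsequence, I may thus assume $y_n=\Sigma^m N_n$ with $N_n=\tau^{-k_n}P$ for a fixed projective $P$ and $k_n\to\infty$, while $x=\Sigma^{m-e}M$ for some $e\in\{0,1\}$ and $M\in\mod\Lambda$ indecomposable, with $M$ postprojective if $e=0$ and $M$ preinjective if $e=1$. Then $\Hom_\sT(x,y_n)=\Hom_{\Db(\Lambda)}(M,\Sigma^eN_n)=\Ext^e_\Lambda(M,N_n)$.

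To finish I would show $\Ext^e_\Lambda(M,N_n)\neq 0$ for $n\gg 0$, using the Euler form $\langle-,-\rangle$ on the Grothendieck group, with $\dim\Hom_\Lambda-\dim\Ext^1_\Lambda=\langle-,-\rangle$, together with the classical asymptotics of the Coxeter transformation $\Phi$ for a tame hereditary algebra: from $[N_n]=\Phi^{-k_n}[P]$ one gets $[N_n]\sim k_n\,c\,\delta$, where $\delta$ is the minimal positive imaginary root and $c>0$ depends only on $P$; hence $\langle[M],[N_n]\rangle\sim k_n\,c\,\langle[M],\delta\rangle$ diverges, with a sign governed by the defect of $M$ — negative when $M$ is postprojective and positive when $M$ is preinjective. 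When $M$ is postprojective ($e=0$) the divergence is towards $+\infty$, which forces $\dim\Hom_\Lambda(M,N_n)\to\infty$; when $M$ is preinjective ($e=1$) it is towards $-\infty$, which forces $\dim\Ext^1_\Lambda(M,N_n)\to\infty$. Either way $\Hom_\sT(x,y_n)\neq 0$ for $n$ large, contradicting $x\in{}^\perp\sY^I$, and so $\sX^I\cap\cN=0$.

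The hard part will be the two structural inputs: establishing the precise shape of a non-regular component of $\sT$ — in particular that an infinite strictly increasing sequence in it drives the underlying modules out of every bounded region of the postprojective component — and controlling $\Hom_\Lambda(M,-)$ and $\Ext^1_\Lambda(M,-)$ against $\tau^{-k_n}P$ as $k_n\to\infty$. Both are standard for tame hereditary algebras, but they are exactly where tameness enters, via the linear growth and the sign behaviour of the defect under $\Phi$; one also has to keep the bookkeeping straight, so that the two cohomological positions $e\in\{0,1\}$ available to $x$ inside $\cN$ are matched by the two signs of divergence of the Euler pairing.
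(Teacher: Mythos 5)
Your proposal is correct, but it reaches the conclusion by a genuinely different route than the paper. The paper's proof is very short: given an indecomposable $x\in\sX^I\cap\cN$, it tilts so that $x$ becomes a \emph{projective} object in the heart $\sH$ of some standard t-structure (a slice through the $\IZ\tilde\Delta$-component gives a tilting object with tame hereditary endomorphism ring), notes that the given strictly increasing sequence eventually lies in $\sY^I\cap\cP$ for the postprojective component $\cP$ of $\sH$, and then invokes \cite[Proposition IX.5.6]{ASS} -- all but finitely many indecomposables in $\cP$ are sincere, hence receive non-zero maps from every projective -- to contradict $\Hom(\sX^I,\sY^I)=0$. You instead fix one hereditary heart $\mod{\Lambda}$, use the shape $\cN\cong\Sigma^{m-1}\cI\cup\Sigma^m\cP$ to split into the two cohomological positions $x=\Sigma^m M$ ($M$ postprojective) or $x=\Sigma^{m-1}M$ ($M$ preinjective), reduce the tail of the sequence to $\Sigma^m\tau^{-k_n}P$ with $k_n\to\infty$, and then force $\Hom_\Lambda(M,\tau^{-k_n}P)\neq0$, resp. $\Ext^1_\Lambda(M,\tau^{-k_n}P)\neq0$, from the linear growth $[\tau^{-k}P]\sim k\,c\,\delta$ of dimension vectors and the sign of the defect via the Euler form; your sign bookkeeping ($\langle[M],\delta\rangle=-\langle\delta,[M]\rangle$, postprojectives of negative defect giving divergence to $+\infty$, preinjectives to $-\infty$) is consistent, and the Coxeter/defect asymptotics you lean on are indeed classical for tame hereditary algebras (see e.g. \cite{SS1}). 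What each approach buys: the paper's tilting step makes the preinjective case disappear entirely and reduces everything to the single sincerity citation, at the cost of the (mildly nontrivial, unstated) fact that every vertex of a non-regular component is projective in some standard heart; your version avoids changing the heart and makes the orthogonality failure quantitative, at the cost of importing the Coxeter-transformation growth estimates, which is exactly the ``hard part'' you identify and would need to be carried out (or cited precisely) to complete the write-up. The structural preliminaries you use -- that an infinite strictly increasing sequence must eventually enter the degree-$m$ postprojective part and escape every bounded region, and the pigeonhole reduction to a single ray $\tau^{-k}P$ -- are correct and implicitly also needed in the paper's argument.
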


\begin{proof}
Suppose $x\in\ind \sX^I \cap \cN$. Then $x$ is a projective object in the heart $\sH$ of some standard t-structure in $\sT$. Now $\sH$ is a tame hereditary abelian category with postprojective component $\cP$ such that $\sY^I \cap \cP$ contains an infinite strictly increasing sequence. By \cite[Proposition IX.5.6]{ASS}, all but finitely many indecomposable objects in $\cP$ are sincere, i.e.\ admit non-zero maps from all projective modules. In particular, $x$ admits a non-zero morphism to all but finitely many objects of $\sY^I \cap \cP$, contradicting the left orthogonality of $\sX^I$ to $\sY^I$, hence $x=0$.
\end{proof}

\begin{lemma}\label{lem:nonregulartermination}
Let $t$ be an object whose indecomposable summands lie in a connected non-regular component $\cN$, and suppose condition~(\ref{itm:combicond}) of Theorem~\ref{thm:small:extended} holds. Then there exists $n_0>0$ such that $\add(\refinedY{t}{n}) \cap \cN \subset \sY^I$ for all $n >n_0$.
\end{lemma}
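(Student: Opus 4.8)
The plan is to show that within the non-regular component $\cN$, the refined truncation sequence can move ``to the right'' only finitely often, so that eventually every indecomposable summand of $\refinedY{t}{n}$ that lies in $\cN$ must actually lie in $\sY^I$. First I would fix the object $t$ whose indecomposable summands lie in $\cN$; by the direct-sum compatibility of the refined algorithm (the last lemma of Section~\ref{sec:refined_algorithm}) it suffices to treat a single indecomposable $t\in\cN$. Without loss of generality $t$ lies in the heart $\sH$ of some standard t-structure whose postprojective component $\cP$ is (a shift of) $\cN$; using heredity, $\Hom_\sT(t,\Sigma^m z)=0$ for all $m\geq 2$ and all $z\in\sH$, so any summand of $\refinedY{t}{n}$ lying in $\cN$ must sit in degree $0$ or $1$ relative to $\sH$. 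Combined with the partial order on the non-regular component and the non-vanishing property $\alpha_n$ is non-zero onto every summand of $\refinedY{t}{n}$, this forces: whenever a summand of $\refinedY{t}{n}$ lies in $\cN$, it lies in the (finite) ``hammock'' of objects $z\in\cN$ with $\Hom_\sT(t,z)\neq 0$ in the allowed degrees.

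The key step is to use the increasing behaviour of the sequence inside $\cN$. Passing from $\refinedY{t}{n-1}$ to $\refinedY{t}{n}$ involves a left truncation with respect to $(\sX_n,\sY_n)$ followed by deleting the ``waste'' summand; on a summand $z\in\cN$ this truncation either leaves $z$ fixed or replaces it by something strictly larger in the AR partial order of $\cN$ (the truncation map $z\to z_{\sY_n}$ is, in the hereditary/tube-free situation of a non-regular component, either an isomorphism or has a nonzero cokernel component supported further to the right). So along the sequence, the summands of $\refinedY{t}{n}\cap\cN$ move monotonically rightward in $\cN$. Since they are simultaneously confined to the finite Hom-hammock of $t$ described above, the rightward motion must stabilise: there is $n_0$ such that for $n>n_0$ and every $i\in I$, the truncation with respect to $\sY_i$ is an isomorphism on each summand of $\refinedY{t}{n}$ lying in $\cN$. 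By the defining property of the left truncation, $z_{\sY_i}\cong z$ means precisely $z\in\sY_i$; running over all $i$ gives $z\in\bigcap_i\sY_i=\sY^I$. Hence $\add(\refinedY{t}{n})\cap\cN\subset\sY^I$ for all $n>n_0$.

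One subtlety I would have to handle carefully: the refined algorithm cycles the index $n$ modulo $d$, so ``stabilisation'' must be argued over a full period of length $d$, not one step at a time; the right tool is Lemma~\ref{lem:summands}, which relates $\refinedY{t}{k+n}$ to $\refinedY{t_k}{n}$ for $k\equiv 1 \bmod d$ and lets me re-start the analysis from any $\refinedY{t}{k}$ once its $\cN$-part is ``small''. A second subtlety is that condition~(\ref{itm:combicond}) enters only to control the degenerate case: if $\sY^I\cap\cN$ has \emph{no} infinite strictly increasing sequence, I cannot conclude the summands stabilise by the hammock argument alone — but then Lemma~\ref{lem:nonregint} does not apply, and I instead argue directly that $\sX^I\cap\cN$ and the relevant Hom-hammocks are small enough that the sequence in $\cN$ is bounded and hence eventually constant with value in $\sY^I$ anyway. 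The main obstacle I anticipate is making the ``moves monotonically rightward and is confined to a finite set'' step fully rigorous: one must verify that the waste-stripping in the refined algorithm cannot reintroduce a summand further to the \emph{left} of where the sequence already was, which requires unwinding the non-vanishing condition on $\alpha_n$ together with the Hom-hammock shapes in the non-regular component (using the structure recalled in Section~\ref{sec:bdd-derived}). Once that monotonicity-plus-finiteness is pinned down, the passage to $z\in\sY^I$ is immediate from the uniqueness of truncation triangles.
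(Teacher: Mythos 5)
There is a genuine gap, and it sits at the heart of your argument: the claim that the summands of $\refinedY{t}{n}$ lying in $\cN$ are ``confined to the \emph{finite} hammock of objects $z\in\cN$ with $\Hom_\sT(t,z)\neq 0$ in the allowed degrees'' is false in the tame domestic case. A connected non-regular component of $\Db(\Lambda)$ is infinite (it glues a copy of the preinjective component with a suspension of the postprojective component), and by \cite[Proposition IX.5.6]{ASS} all but finitely many indecomposables of the postprojective part are sincere; consequently a fixed indecomposable $t\in\cN$ admits non-zero maps (in degree $0$, or in degree $1$ via $\Ext^1$) to \emph{infinitely} many objects of $\cN$. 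This sincerity phenomenon is exactly what the paper exploits in Lemma~\ref{lem:nonregint} to show $\sX^I\cap\cN=0$ --- it works against you here. So ``monotone rightward motion inside a finite set'' has no finite set to be confined to, and the stabilisation step collapses. Relatedly, your use of condition~(\ref{itm:combicond}) is inverted: you defer it to the case where $\sY^I\cap\cN$ has no infinite increasing sequence, but that is precisely the easy case (it forces some $\sY_i\cap\cN$ to have no infinite increasing sequence, and then the subsequence of $\cN$-parts indexed by $n\equiv i \bmod d$, which lies in $\sY_i\cap\cN$ and is increasing along non-zero maps from $t$, stabilises directly). Condition~(\ref{itm:combicond}) is indispensable in the other case, which your sketch leaves unsupported.

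The correct finiteness lives on the aisle side, not the co-aisle side. When every $\sY_i\cap\cN$ has an infinite strictly increasing sequence, condition~(\ref{itm:combicond}) gives such a sequence in $\sY^I\cap\cN$; Lemma~\ref{lem:nonregint} then yields $\sX^I\cap\cN=0$, and since $\sX^I\subset{}^{\perp}(\sY^I)$ is extension-closed and closed under $\Sigma$, every indecomposable of $\sX^I$ in $\bigcup_{k\geq 0}\Sigma^{-k}(\cR\cup\cN)$ must lie in a wing of one of the finitely many inhomogeneous tubes. Hence only finitely many indecomposables of $\sX^I$ admit non-zero maps to $t$, so up to multiplicities there are only finitely many possible truncating objects $\refinedX{t}{n}$ and therefore only finitely many possible cones $\refinedY{t}{n}$; combined with the partial order this forces the $\cN$-parts of the sequence to stabilise in $\sY^I$. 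Your reduction to a single indecomposable via the direct-sum lemma and your appeal to Lemma~\ref{lem:summands} are fine, but without replacing the false ``finite Hom-hammock of $t$ in $\cN$'' claim by this finiteness of $\{x\in\ind(\sX^I):\Hom(x,t)\neq 0\}$, the proof does not go through.
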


\begin{proof} We treat two cases:

\noindent\emph{Case 1. There is an $i \in I$ such that $\sY_i \cap \cN$ has no strictly increasing sequence of objects.}
Let ${\refinedY{t}{n}'}$ denote the sum of the summands of $\refinedY{t}{n}$ which lie in $\cN$. We obtain a sequence of morphisms
 $ t \to \refinedY{t}{0}' \to \refinedY{t}{1}' \to \refinedY{t}{2}' \to \refinedY{t}{3}' \to \cdots $
where the composition from $t$ to any summand of any ${\refinedY{t}{i}'}$ is non-zero. 
By the assumption of this case, the subsequence 
 $ \refinedY{t}{i}' \to \refinedY{t}{2i}' \to \refinedY{t}{3i}' \to \refinedY{t}{4i}' \to \cdots $
of objects in $\sY_i \cap \cN$ must stabilise, that is, $\refinedY{t}{ki}' \cong \refinedY{t}{(k+1)i}'$ for all $k\gg 0$. Furthermore, using the fact that the morphisms define a partial order we see that $\refinedY{t}{n}' \cong \refinedY{t}{n_0}' =: \tilde{y}$ for all $n \geq n_0$ and so $\tilde{y} \in \sY^I$ as required.

\noindent\emph{Case 2. For all $i \in I$ there exists a strictly increasing sequence of objects in $\sY_i \cap \cN$.} It follows from condition~(\ref{itm:combicond}) that $\sY^I \cap \cN$ has an infinite strictly increasing sequence of elements. Let $\cR$ denote the regular component with non-zero morphisms into $\cN$.

Let $\cT\subset\cR$ be a tube of rank $\tuberk$. Then there is a non-zero morphism from any element $t'\in\cT$ of length $\len{t'}\geq\tuberk$ to some element of any given infinite strictly increasing sequence in $\cN$; see Section~\ref{sec:structure-tubes}.
As $\sX^I \subset {}^{\perp} (\sY^I)$ and $\sX^I$ is extension closed, all indecomposable objects of $\sX^I \cap \cR $ are contained in wings of the inhomogeneous tubes of $\cR$. Lemma~\ref{lem:nonregint} implies that $\sX^I \cap \cN =0$. Putting these two statements together with the fact that $\Sigma \sX^I \subset \sX^I$, we see that all indecomposable objects of $\sX^I \cap \bigcup_{k \in \IN}\Sigma^{-k}(\cR \cup \cN)$ are contained in wings of the inhomogeneous tubes.

Now take any indecomposable object $t \in \cN$. For each $n \in \IN$, the refined algorithm yields
 $ \refinedX{t}{n} \to t \to \refinedY{t}{n} \to \Sigma \refinedX{t}{n} $
where $\refinedX{t}{n}\in\sX^I$.
Since there are only a finite number of (non-zero) morphisms from indecomposable objects in $\sX^I$ to $t$, up to repeated summands, there are only a finite number of cones of morphisms from $\sX^I$ to $t$. The result then follows from the partial order.
\end{proof}

Before obtaining the analogous finiteness statement for the refined truncation sequence in regular components, we first need to establish a bound on the length of indecomposable summands lying in a tube $\cT$ of the left truncation of a t-structure $(\sX,\sY)$ in $\sT$ when $\card{\ind (\sY \cap \cT)} = \infty$. This is done in the next subsection.

\subsection{A bound for the length of indecomposable summands in a tube} \label{sec:bound}

\begin{setup}\label{set:tube}
Throughout this section, $\cT$ will be a standard stable tube of rank $\tuberk>1$ in a piecewise tame hereditary triangulated category $\sT$. We consider a t-structure $(\sX,\sY)$ on $\sT$ with the property that $\card{\ind (\sY \cap \cT)} = \infty$. Using facts about the morphisms in $\cT$ discussed in Section~\ref{sec:hom-ext-in-tubes} this implies that all indecomposable objects in $\sX \cap \cT$ have length less than $\tuberk$.
We consider the truncation triangle of an indecomposable object $t\in \cT$ with respect to the t-structure $(\sX,\sY)$: 
\begin{equation}\label{eq:truncation}
    \tri{t_{\sX}}{t}{t_{\sY}}.
\end{equation}
Let $\cN$ be the unique non-regular component which admits non-zero morphisms to $\cT$. Then for any object of $\cN$, there is a non-zero morphism to an object in $\sY \cap \cT$ (see Section~\ref{sec:structure-tubes}). It follows that no summand of $t_{\sX}$ can lie in $\cN$ and therefore $t_{\sX}$ decomposes as
$t_{\sX} \cong \Sigma^{-1} r' \oplus r $,
with $r',r\in\cT$.
\end{setup}

\begin{lemma} \label{lem:nohom}
If $t\in \cT$ and $r, s$ are indecomposable summands of $t_\sX$, then $\Hom(r, s)=0$.
\end{lemma}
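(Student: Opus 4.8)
The statement to prove is that if $t\in\cT$ is indecomposable and $r,s$ are indecomposable summands of its right truncation $t_{\sX}$, then $\Hom_{\sT}(r,s)=0$. From the discussion in Setup~\ref{set:tube} we know $t_{\sX}\cong\Sigma^{-1}r'\oplus r$ with $r',r\in\cT$; so the summands of $t_{\sX}$ come in two `batches', those in $\cT$ itself (in degree $0$) and those in $\Sigma^{-1}\cT$ (in degree $-1$). The first observation is that because $\sT$ is piecewise hereditary and the two batches sit in adjacent cohomological degrees, certain Hom's vanish for degree reasons: there are no nonzero maps from a degree-$0$ object to a degree-$(-1)$ object, so the only potentially nonzero cases are (i) $r,s$ both summands of $r$ (degree $0$ to degree $0$, both in $\cT$), (ii) $r,s$ both summands of $\Sigma^{-1}r'$ (degree $-1$ to degree $-1$), and (iii) $r$ a summand of $\Sigma^{-1}r'$ and $s$ a summand of $r$ (degree $-1$ to degree $0$). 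Case (ii) reduces to case (i) after applying $\Sigma$. So really I must handle maps within $\cT$ (possibly after a shift) and the `diagonal' maps $\Sigma^{-1}(\text{something in }\cT)\to(\text{something in }\cT)$, which by the hereditary identification $\Hom_{\sT}(\Sigma^{-1}a,b)\cong\Ext^1_{\cT}(a,b)$ become extension classes inside the tube.

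The key idea is to exploit that $t_{\sX}$ is a right truncation, hence an object of $\sX$, while $t_{\sY}$ is an object of $\sY$, together with the rigidity of $t_{\sX}$ forced by the fact that $t$ is \emph{indecomposable}. First I would record that $\Hom_{\sT}(t_{\sX},t_{\sX})$ maps onto $\Hom_{\sT}(t_{\sX},t)$ via composition with the truncation map $t_{\sX}\to t$, because $\Hom_{\sT}(t_{\sX},\Sigma^{-1}t_{\sY})=0$ (again from the triangle and $\Hom(\sX,\sY)=0$ after noting $\Sigma^{-1}t_{\sY}\in\sY$). So a nonzero endomorphism-type map $\varphi\colon r\to s$ between summands, viewed as an endomorphism of $t_{\sX}$ that is zero on the complementary summands, would either compose nontrivially with $t_{\sX}\to t$ — giving a map that, because $t$ is indecomposable with local endomorphism ring, must be either an isomorphism onto a summand (impossible, as $r,s$ are proper summands of $t_{\sX}$ which itself only maps to $t$, not from $t$) or nilpotent — or it composes to zero, in which case $\varphi$ factors through $\Sigma^{-1}t_{\sY}$, contradicting $\Hom_{\sT}(t_{\sX},\Sigma^{-1}t_{\sY})=0$ unless $\varphi=0$. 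The technical lemmas~\ref{lem:non-zero-summands}, \ref{lem:non-zero-maps}, \ref{lem:summand-multiplicity} are exactly the tools that let one push this argument through: $\Hom_{\sT}(r,t)$ and $\Hom_{\sT}(s,t)$ are one-dimensional (all nonzero Hom-spaces involving length-$<\tuberk$ objects of $\cT$ are one-dimensional, by Section~\ref{sec:hom-ext-in-tubes}), so by Lemma~\ref{lem:summand-multiplicity} each of $r,s$ appears with multiplicity one, and by Lemma~\ref{lem:non-zero-summands} the components of the truncation map to each are nonzero; a nonzero $\varphi\colon r\to s$ would then violate the minimality/uniqueness of the truncation or produce a splitting of $t$.

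Concretely, I would argue by contradiction: suppose $\varphi\colon r\to s$ is nonzero. Precompose and postcompose with the structure maps to get the two triangles/octahedra of Lemma~\ref{lem:non-zero-maps} applied to the decomposition $t_{\sX}=r\oplus(\text{rest})$; the nonzero map $g\colon r\to u$ there, combined with $\varphi$, yields a nonzero map from $r$ into the cone structure that cannot exist because $s$ and $r$ both map to $t$ via the \emph{unique} (up to scalar) truncation maps, and $t$ being indecomposable forbids the existence of a map $r\to s$ compatible with both. The last step is to translate the `diagonal' case (iii), $\Hom_{\sT}(\Sigma^{-1}r'_j, r_k)\cong\Ext^1_{\cT}(r'_j,r_k)$, into a statement about extensions in the uniserial tube: a nonzero such extension would, via the graphical calculus of Section~\ref{sec:hom-ext-in-tubes}, again contradict indecomposability of $t$ or the defining property $t_{\sX}\in\sX$, $t_{\sY}\in\sY$ of the truncation. \textbf{The main obstacle} I anticipate is the bookkeeping of the several octahedra and the careful use of one-dimensionality of Hom-spaces in the tube (valid only because the relevant lengths are $<\tuberk$, as recorded in Setup~\ref{set:tube}); getting the degree-shift case right — i.e.\ ruling out nonzero $\Ext^1$ classes between the two batches of summands of $t_{\sX}$ — will require the hereditary computation $\Cone(f)=\coker(f)\oplus\Sigma\kernel(f)$ and a clean application of the tube's uniseriality, and this is where I expect most of the work to concentrate.
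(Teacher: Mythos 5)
The decisive step of this lemma is precisely the branch you leave unresolved: the case where the composite $r \to s \hookrightarrow t_{\sX} \to t$ is non-zero. Your handling of it is a non-sequitur. The ``isomorphism or nilpotent'' dichotomy attached to a local endomorphism ring applies to endomorphisms of the indecomposable $t$, not to a map $t_{\sX}\to t$ (or $r\to t$) between non-isomorphic objects; and the later assertions (``violates the minimality/uniqueness of the truncation'', ``$t$ being indecomposable forbids a map $r\to s$ compatible with both'') are claims, not arguments --- indecomposability of $t$ by itself cannot forbid a non-zero $r\to s$. What actually closes this branch is a short computation with exactly the ingredients you list but never deploy: since all indecomposable summands of $t_{\sX}$ have length $<\tuberk$ (Setup~\ref{set:tube}), $\dim\Hom(r,t)\leq 1$, and the summand component $r\hookrightarrow t_{\sX}\to t$ is non-zero by Lemma~\ref{lem:non-zero-summands}; hence a non-zero composite $r\to s\to t$ would be a non-zero scalar multiple of it, so the same map $r\to t$ would admit two factorisations through $t_{\sX}\to t$, namely $\iota_r$ and $\iota_s\psi$ (up to scalars). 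Because $\Hom(r,\Sigma^{-1}t_{\sY})=0$, factorisations through the truncation map are unique, so $\lambda\iota_r=\iota_s\psi$ with $\lambda\neq 0$; composing with the projection onto the summand $r$ gives $\lambda\,\id_r=0$, a contradiction. This uniqueness-of-lift argument is the missing idea; without it the lemma is not proved.

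The rest of your plan is essentially fine but partly superfluous. Your ``composes to zero'' branch is correct and even streamlines the paper: if $r\to s\to t$ vanishes, then $\iota_s\psi$ lifts along the triangle $\Sigma^{-1}t_{\sY}\to t_{\sX}\to t$ and dies because $\Hom(r,\Sigma^{-1}t_{\sY})=0$ (equivalently $\Hom(t_{\sX},\Sigma^{-1}t_{\sY})=0$), whence $\psi=0$; the paper instead forms the cone $c$ of $\psi$, notes $c\in\sX$, factors the induced map $c\to t$ through $t_{\sX}$, and uses $\dim\Hom(s,s)=1$ to split the triangle $r\to s\to c$. On the other hand, the degree-by-degree case distinction and the separate $\Ext$-class analysis for summands of $\Sigma^{-1}r'$, which you flag as the main obstacle, are unnecessary: the argument above is uniform in $r,s\in\cT\cup\Sigma^{-1}\cT$, since its only inputs are the length bound $<\tuberk$ (making the relevant non-zero Hom-spaces one-dimensional, also for shifted objects via the AR formula), Lemma~\ref{lem:non-zero-summands}, and the orthogonality $\Hom(\sX,\sY)=0$.
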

\begin{proof}
First recall from the setup that $r, s \in \Sigma^{-1} \cT \cup \cT$ with $\len{r}<\tuberk$ and $\len{s}<\tuberk$.
Take any morphism $\psi\colon r \to s$ and consider the composition $\varphi\colon r \to s \hookrightarrow t_\sX \to t$. Since $\len{r}<\tuberk$ we know that $\dim \Hom(r, t)=1$ and so if $\varphi \neq 0$ it is, up to a non-zero scalar multiple, this unique morphism. However $r\to t$ factors through $t_\sX \to t$ in two distinct ways (as a summand of $t_X$ and via $\phi$) which is a contradiction.
Therefore $\varphi$ is zero. Consider the diagram,
\[
\xymatrix{
 r \ar[r]^-{\psi}\ar@{-->}[dr]_-{\varphi = 0} & s \ar[r] \ar[d] & c \ar[r] \ar@{-->}[dl]^-{\exists} & \Sigma r , \\
                                              & t               &                                   &
}
\]
where $c$ is the cone of $\psi$. Since $\sX$ is closed under cones, $c$ is in $\sX$ and the morphism $c \to t$ must factor through the truncation $t_\sX$. By the uniqueness of such a factorisation, it must factor back through the summand $s$. Since $\len{s}<\tuberk$ we know that $\dim\Hom(s,s) = 1$, and so the morphism $s \to c \to s$ is the identity up to a scalar multiple. Therefore, the triangle splits and the morphism $\psi\colon r \to s$ must be zero.
\end{proof}

The following proposition establishes the bound; its proof is a sequence of lemmas.

\begin{proposition} \label{prop:length-bound}
Under the conditions of Setup~\ref{set:tube}, any indecomposable summand of $t_{\sY}$ lying in $\cT$ has length at most 
 $ \tuberk \ceil{\len{t}/\tuberk} $.
\end{proposition}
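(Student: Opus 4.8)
\emph{The plan.} I would split the proof into three lemmas, following the same strategy as Lemma~\ref{lem:summands-of-v}: first identify which indecomposables of $t_\sY$ lie in $\cT$, then split that object by the tube combinatorics, then bound the one potentially long summand using the t-structure axioms. Throughout, $t$ is indecomposable as in Setup~\ref{set:tube}, $l:=\len{t}$, and $r'\in\sX\cap\cT$ (so $\len{r'}<\tuberk$, since $\Sigma^{-1}r'$ is a summand of $t_\sX\in\sX$ and $\sX$ is closed under $\Sigma$). First I would apply the octahedral axiom to the composite $r\hookrightarrow t_\sX\xrightarrow{\,u\,}t$, where $u$ is the right-truncation map and $r$ the summand of $t_\sX$ lying in $\cT$; this produces a distinguished triangle $\Sigma^{-1}r'\to\Cone(\phi)\to t_\sY\to r'$ with $\phi\colon r\to t$ the corresponding component of $u$. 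As $r,t\in\mod\Lambda$, the cone formula gives $\Cone(\phi)=\coker(\phi)\oplus\Sigma\kernel(\phi)$, and taking cohomology one finds that the summands of $t_\sY$ lying in $\cT$ are precisely the summands of a module $v\in\cT$ fitting into a short exact sequence
\[ 0 \too \coker(\phi) \too v \too r' \too 0 \]
in $\cT$. Here $\coker(\phi)$ is a quotient of $t$, hence indecomposable of length $\le l$ (the case $\phi=0$ only occurs when $r=0$ by Lemma~\ref{lem:non-zero-summands}, and then $\coker(\phi)=t$).

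\emph{Splitting $v$.} Since the quotient term $r'$ has length less than $\tuberk$, the graphical calculus for extensions in tubes (Section~\ref{sec:hom-ext-in-tubes}) applies. Either the sequence splits, so $v=\coker(\phi)\oplus r'$ with both summands of length $\le\max\{l,\tuberk-1\}\le\tuberk\ceil{l/\tuberk}$; or $v=e_1\oplus e_2$, where $e_1\in\sC(r')\cap\sR(\coker\phi)$ is the shortest object strictly longer than $\coker\phi$ and $e_2\in\sR(r')\cap\sC(\coker\phi)$ is the (possibly zero) longest object strictly shorter than $\coker\phi$. In the non-split case $e_1\neq0$ (as $\len{v}\ge\len{\coker\phi}$ forces the long summand to survive) and $\len{e_2}<\len{\coker\phi}\le l\le\tuberk\ceil{l/\tuberk}$, so it remains only to bound $\len{e_1}$.

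\emph{Bounding $e_1$.} This is where the t-structure re-enters. If $\len{e_1}<\tuberk$ we are done, so assume $\len{e_1}\ge\tuberk$. Let $m\in\{1,\dots,\tuberk\}$ be the minimal length of an indecomposable of $\cT$ with the same top as $r'$ and the same socle as $\coker\phi$, so $\len{e_1}\equiv m\pmod{\tuberk}$. Since $e_1$ is a summand of $t_\sY\in\sY$ while $r'\in\sX$, semi-orthogonality gives $\Hom(r',e_1)=0$; hence $m>\len{r'}$, for otherwise the length-$m$ object with that top and socle would be simultaneously a quotient of $r'$ and (as $m\le\len{r'}<\tuberk\le\len{e_1}$) a submodule of $e_1$, giving a nonzero map. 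On the other hand the extension above is a nonzero class in $\Ext^1(r',\coker\phi)$; via the Auslander--Reiten formula $\Ext^1(r',\coker\phi)\cong D\Hom(\coker\phi,\tau r')$ and the Ext-hammock, the quantity $k:=\len{e_1}-\len{\coker\phi}$ — which is the representative in $\{1,\dots,\tuberk\}$ of the difference of the tops of $r'$ and $t$ modulo $\tuberk$ — must satisfy $k\le\len{r'}$. Now if $\len{e_1}>\tuberk\ceil{l/\tuberk}$, then, $\tuberk\ceil{l/\tuberk}$ being a multiple of $\tuberk$ and $m\le\tuberk$, we obtain
\[ \len{e_1}\;\ge\;\tuberk\ceil{l/\tuberk}+m\;>\;l+\len{r'}\;\ge\;\len{\coker\phi}+k\;=\;\len{e_1}, \]
a contradiction. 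Hence $\len{e_1}\le\tuberk\ceil{l/\tuberk}$, which finishes the proof.

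\emph{Main obstacle.} The identification of $v$ and its splitting are bookkeeping with the octahedral axiom, the cone formula, and the tube combinatorics of Sections~\ref{sec:structure-tubes}--\ref{sec:hom-ext-in-tubes}. The genuine work is the last step: turning the abstract vanishing $\Hom(r',e_1)=0$ and the non-degeneracy of the extension into the two sharp congruence-type inequalities $m>\len{r'}$ and $k\le\len{r'}$, which together upgrade the naive estimate $\len{\coker\phi}+\tuberk\le l+\tuberk$ to $\tuberk\ceil{l/\tuberk}$. Keeping the socles, tops, and the Auslander--Reiten translate straight in these two computations is the delicate point, and is probably cleanest to carry out with the hammock calculus of Section~\ref{sec:hom-ext-in-tubes}.
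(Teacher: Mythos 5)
Your reorganisation of the computation has one genuinely nice feature: taking the cone of the \emph{total} map $\phi\colon r\to t$ in one step is legitimate, since the cokernel of any map into the uniserial object $t$ is again a quotient of $t$, hence indecomposable (or zero) of length at most $\len{t}$; this bypasses the summand-by-summand analysis that the paper carries out for $r$ in Lemmas~\ref{lem:r-summands} and~\ref{lem:summands-of-w}. Moreover, in the case where the $\Sigma^{-1}\cT$-part $r'$ of $t_\sX$ is indecomposable, your numerical argument is correct and genuinely different from the paper's: the two congruence-type estimates ($m>\len{r'}$ from $\Hom(r',e_1)=0$, and $k=\len{e_1}-\len{\coker(\phi)}\le\len{r'}$ from the non-vanishing of the class in $\Ext^1(r',\coker(\phi))$ via the Auslander--Reiten formula) do combine to give the bound, replacing the explicit hammock case analysis of Lemma~\ref{lem:length-of-v}.

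However, there is a genuine gap: nothing in Setup~\ref{set:tube} makes $r'$ indecomposable, and your proof silently assumes that it is. All that is known is that each indecomposable summand of $r'$ has length less than $\tuberk$; if $r'=r'_1\oplus\cdots\oplus r'_m$ with $m\ge 2$, then the coray $\sC(r')$ is not defined, $\Ext^1(r',\coker(\phi))$ is no longer (at most) one-dimensional, and the dichotomy ``split, or $v=e_1\oplus e_2$'' imported from Section~\ref{sec:hom-ext-in-tubes} fails --- the middle term of the resulting iterated extension can have up to $m+1$ indecomposable summands, and your identification of $k$ with a single difference of tops breaks down. In particular, a priori each successive summand $r'_i$ could lengthen the long summand further, and ruling this out is exactly the content of the paper's Lemma~\ref{lem:nohom} (the summands of $t_\sX$ are pairwise Hom-orthogonal, so lie on distinct corays) together with the induction of Lemma~\ref{lem:summands-of-v}, which shows that only one summand $v_1'$ can exceed length $\tuberk$ and that the later extension classes attach only to the short leftover piece. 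To complete your argument you would need to run the extension analysis one summand $r'_i$ at a time and re-prove these two facts (after which your congruence trick would indeed finish the bound); as written, the proposal establishes the proposition only when $t_\sX$ has at most one indecomposable summand in $\Sigma^{-1}\cT$.
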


\begin{proof}
Suppose that we have a direct sum decomposition of an object $a= \bigoplus_{i=1}^l a_i$, and a morphism $a \to t$.  Using octahedron diagrams like the following,
\begin{equation} \label{eq:main-octahedron}
\xymatrix{
\bigoplus_{i=1}^k a_i       \ar@{=}[r]        \ar[d]_-{\iota}  & \bigoplus_{i=1}^k a_i \ar[d]^-{f}     &                           \\
\bigoplus_{i=1}^{k+1} a_i  \ar[r]^-{ } \ar[d]_-{\pi}    & t 	            \ar[d] \ar[r]  & b_{k+1} \ar@{=}[d] 		\\
           a_{k+1}		\ar[r]^-{\phi}                     &  b_k                    \ar[r]  &  b_{k+1}
} \end{equation}
we see that the cone $b_l = \Cone(a \to t)$ can be calculated inductively, where $b_{k+1} = \Cone(a_{k+1} \to b_k)$ and $b_1 = \Cone(a_1 \to t)$. 
Note that although a choice is being made in the order of the summands, this is not important because the cone we construct is unique up to non-unique isomorphism.

Recall the decomposition of the right truncation $t_{\sX}$ into two summands given in Setup~\ref{set:tube}. We further decompose each of these into indecomposables and use the method outlined above to build up the cone $y$. We start by considering the summands of $\Sigma^{-1} r'$.

\begin{lemma} \label{lem:summands-of-v}
The object $v := \Cone( \Sigma^{-1} r' \to t)$ decomposes into direct summands, 
\[
v \cong v'_1 \oplus v'_2 \oplus \cdots \oplus v'_{m-1} \oplus v'_m \oplus v''_m ,
\]
with $\len{v'_1} > \len{v'_2} > \cdots > \len{v'_m} > \len{v''_m}$, where $m$ is the number of non-isomorphic indecomposable summands of $r'$. Furthermore, $v'_1$ is the only summand whose length is possibly greater than $\tuberk$.
\end{lemma}

\begin{proof}
Write $r'$ as a direct sum of indecomposables, $r'\cong r'_1 \oplus r'_2 \oplus \cdots \oplus r'_m$. It is clear from Lemma~\ref{lem:nohom} that each summand $r'_i$ lies on a distinct coray $\sC(r'_i)$. Let $s_i\in \IN$ be such that $\tau^{-s_i}t\in\sC(r'_i)$. Without loss of generality we relabel the summands such that $s_1 > s_2 > \cdots > s_m$. We know several facts about the summands $r'_i$ which severely restrict where they can lie in the tube:
\begin{itemize}
\item Each $r'_i$ is in $\sX$ and so $\len{r'_i} < \tuberk$.
\item There exists a non-zero morphism $\Sigma^{-1} r'_i \to t$, i.e.\ $r'_i$ lies in the Ext-hammock of $t$.
\item Lemma~\ref{lem:nohom} implies that $r'_{i+1}$ lies outside the Hom-hammock of $r'_i$.
\end{itemize}
These facts, and the ordering of the $\sC(r'_1),\ldots,\sC(r'_m)$, yield $r'_{i+1} \in \sW(r'_i)\backslash(\sR(r'_i) \cup \sC(r'_i))$. 

We now build the cone $v$ inductively. The object $v_1 = \Cone(\Sigma^{-1} r'_1 \to t)$ can be decomposed as $v_1=v'_1 \oplus v''_1$ with $v'_1 \in \sC(r'_1) \cap \sR(t)$  and $v''_1 \in \sR(r'_1) \cap \sC(t)$ (see Section~\ref{sec:background}).
Observe that $\len{v''_1} < \len{r'_1} < \tuberk$. Additionally, since $ \sC(v'_1) = \sC(r'_1)$ and $\len{r'_1}<\tuberk$, looking at the Ext-hammock of $v'_1$, we see that there are no non-split extensions from $v'_1$ to $\sW(r'_1)$. In particular $\Hom(\Sigma^{-1} r'_i,v'_1)=0$ for all $i=1,\dots,m$.

\bigskip \noindent 
\textbf{Induction step.} 
\emph{Let $j\geq 1$ and suppose that $v_j \cong v'_1 \oplus \cdots \oplus v'_j \oplus v''_j $ as a direct sum of indecomposable objects  with $\len{v'_1} > \cdots > \len{v'_j} > \len{v''_j}$ and only $\len{v'_1}$ possibly larger than $\tuberk-1$. Moreover, assume that $\Hom(\Sigma^{-1} r'_k, v'_i)=0$  for $i=1,\ldots,j$ and $k=j,\ldots,m$.}

\begin{center}
\psfrag{t}{$\scriptstyle t$}
\psfrag{r1}{$\scriptstyle r_1'$}
\psfrag{r2}{$\scriptstyle r_2'$}
\psfrag{r3}{$\scriptstyle r_3'$}
\psfrag{v1}{$\scriptstyle v_1'$}
\psfrag{v2}{$\scriptstyle v_2'$}
\psfrag{v3}{$\scriptstyle v_3'$}
\psfrag{w1}{$\scriptstyle v_1''$}
\psfrag{w2}{$\scriptstyle v_2''$}
\psfrag{w3}{$\scriptstyle v_3''$}
\psfrag{W}{$\sW(r_1')$}
\includegraphics[width=0.5\textwidth]{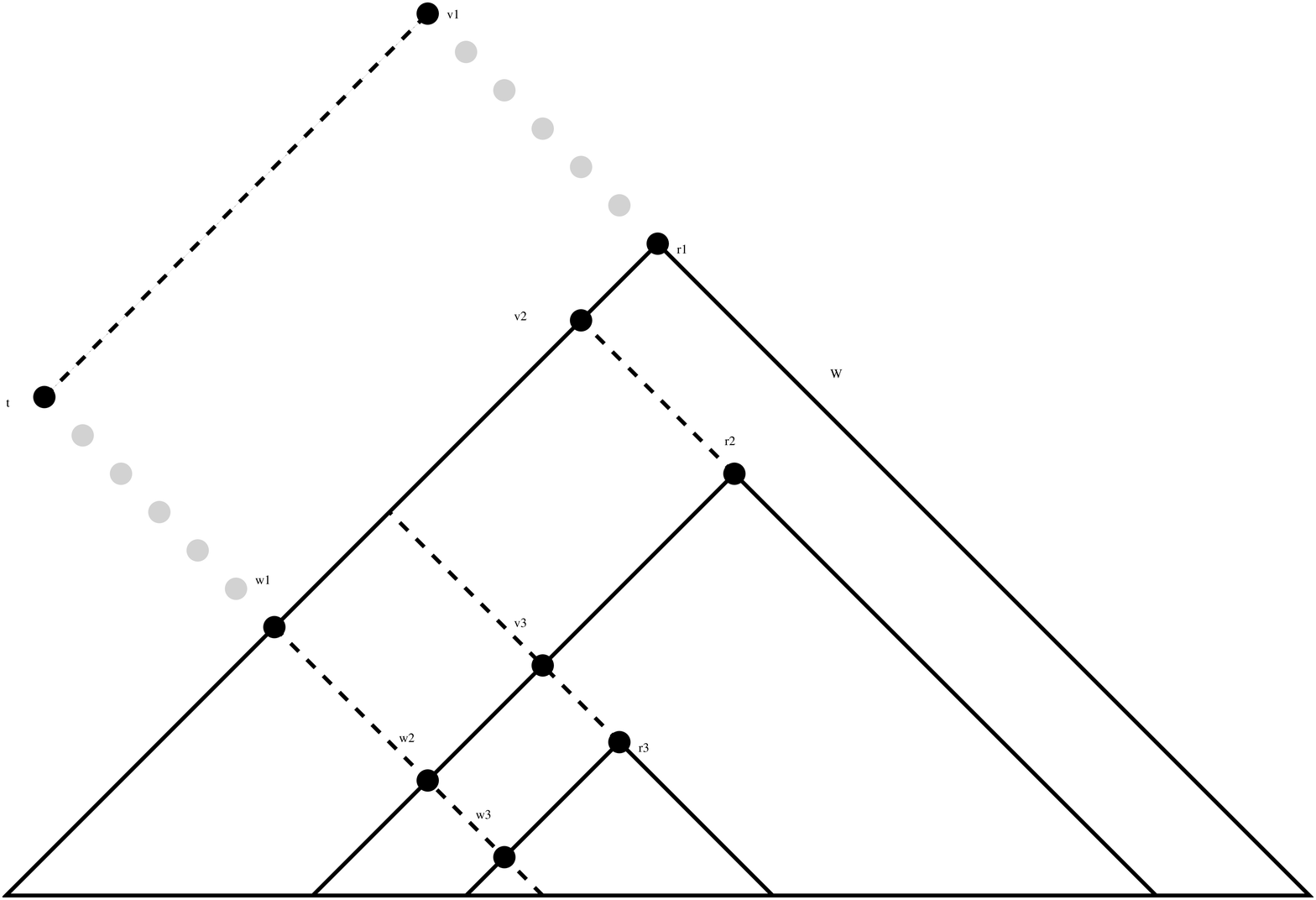}
\end{center}

\bigskip \noindent
The morphism $\Sigma^{-1}r'_{j+1} \to v_j$ is non-zero by Lemma~\ref{lem:non-zero-maps}, but by the inductive hypotheses there are no maps from $\Sigma^{-1}r'_{j+1}$ to any of the summands of $v_j$ except $v''_j$. Using the graphical calculus (see Section~\ref{sec:background}), we see that the cone of the morphism $\Sigma^{-1}r'_{j+1}\to v''_j$ decomposes as $v'_{j+1} \oplus v''_{j+1}$ with $v'_{j+1} \in \sC(r'_{j+1}) \cap \sR(v''_j) = \sC(r'_{j+1}) \cap \sR(r'_j)$ and $v''_{j+1}\in \sR(r'_{j+1}) \cap \sC(v''_j)$. Since $r'_{j+1} \in \sW(r'_j)$ we see that $\len{v'_j} > \len{v'_{j+1}} > \len{v''_{j+1}}$ and as $ \sC(v'_{j+1}) = \sC(r'_{j+1})$ and $\len{r'_{j+1}}<\tuberk$ we see from the Ext-hammock of $v'_{j+1}$ that there are no non-split extensions from $v'_{j+1}$ to $\sW(r'_{j+1})$. In particular $\Hom(\Sigma^{-1} r'_k,v'_{j+1})=0$ for all $k=j+1,\dots,m$. Finally we have the following diagram:
\begin{equation*} \label{eq:split-nonsplit}
\xymatrix{
                                           & \bigoplus_{i=1}^j \Sigma^{-1} v'_i \ar@{=}[r] \ar[d]_-{0} & \bigoplus_{i=1}^j \Sigma^{-1} v'_i \ar[d] \\
\Sigma^{-1} r'_{j+1} \ar[r]^-{ } \ar@{=}[d] & v''_j                             \ar[r]     \ar[d]      & v'_{j+1} \oplus v''_{j+1}          \ar[d] \\
\Sigma^{-1} r'_{j+1} \ar[r]                 &  v_j                              \ar[r]                 & v_{j+1}
} \end{equation*}

\noindent
from which we see that
 $ v_{j+1} = v'_1 \oplus \cdots \oplus v'_j \oplus v'_{j+1} \oplus v''_{j+1} $,
as required. This completes the proof of Lemma~\ref{lem:summands-of-v}, since $v=v_m$ by construction.
\end{proof}

We need to understand the length of the longest summand $v'_1$. The two typical situations are depicted in Figure~\ref{fig:tube-Lemma}.

\begin{figure}
\psfrag{A}{$A$}
\psfrag{t}{$\scriptstyle t$}
\psfrag{r1'}{$\scriptstyle r_1'$}
\psfrag{v1'}{$\scriptstyle v_1'$}
\psfrag{t2}{$\scriptstyle \tilde t$}
\psfrag{v2'}{$\scriptstyle \tilde v_1'$}
\psfrag{l1}{$\scriptstyle \text{length } 1$}
\psfrag{lR}{$\scriptstyle \text{length } \tuberk$}
\psfrag{l2R}{$\scriptstyle \text{length } 2\tuberk$}
\psfrag{lt}{$\scriptstyle \ell(t)$}
\psfrag{lt2}{$\scriptstyle \ell(\tilde t)$}
\psfrag{lv}{$\scriptstyle \ell(v_1')$}
\psfrag{lv2}{$\scriptstyle \ell(\tilde v_1')$}
\centering
\includegraphics[width=0.4\textwidth]{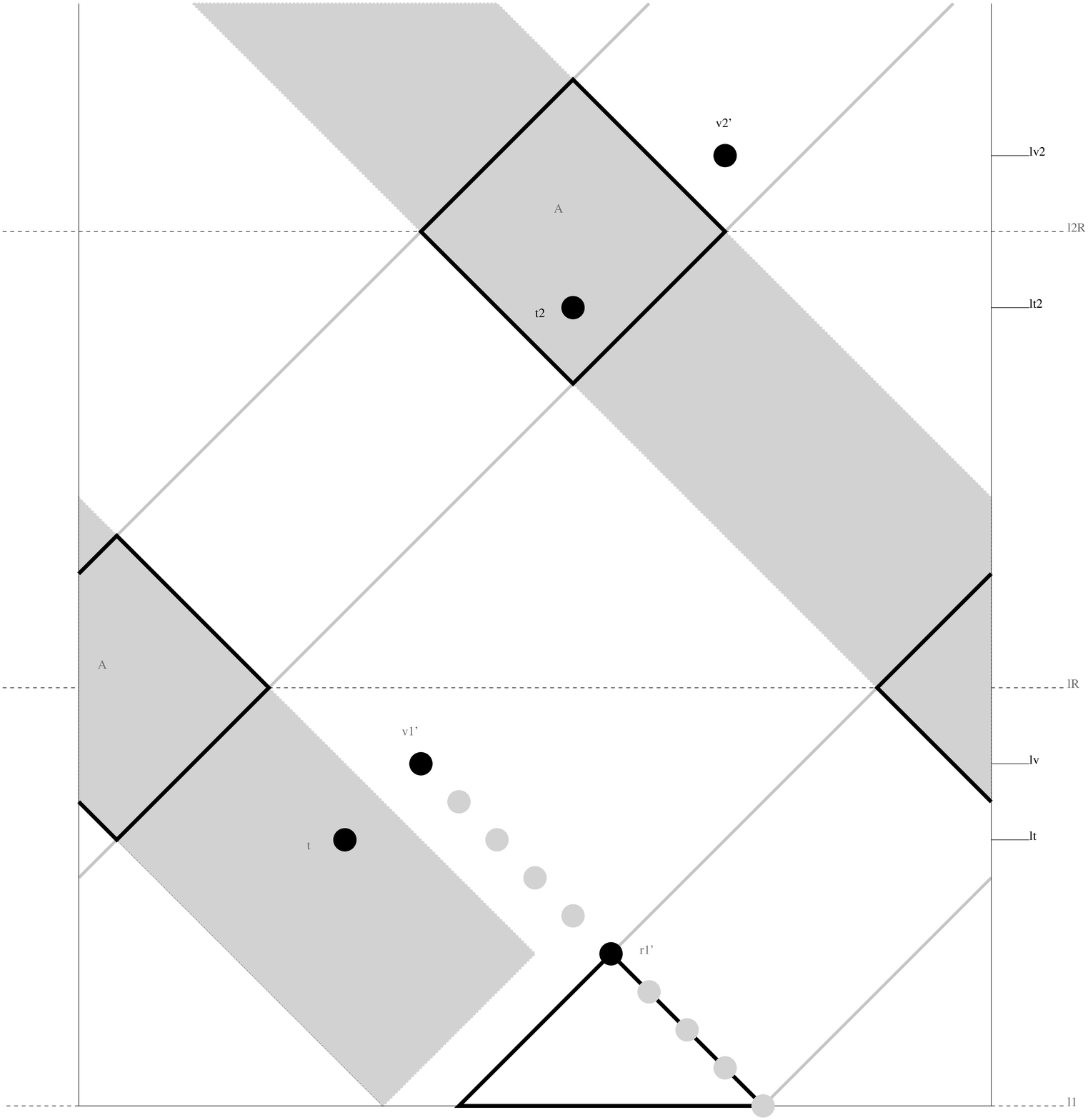} 
\caption[]{ \label{fig:tube-Lemma} \small
The shaded region is the $\Ext^1(r_1',\cdot)\neq0$ hammock. The squares marked $A$ are the intersection with the region $\Hom(r_1',\cdot)\neq0$. Objects $t$ and $\tilde t$ (which must be in the shaded region) show the two possibilities: $t$ and $v_1'$ are in the same $\tuberk$-region whereas $\tilde t$ and $\tilde v_1'$ are not (because $\tilde t$ is in the lower half of $A$).
}
\end{figure}

\begin{lemma} \label{lem:length-of-v}
With the notation from Lemma~\ref{lem:summands-of-v}, we have:
\begin{enumerate}
\item[(i)]  If there are no non-zero maps $r'_1 \to t$, then $\len{v'_1} \leq \tuberk \ceil{\len{t}/\tuberk}$.
\item[(ii)] If there is a non-zero map $r'_1 \to t$, then there exists an integer $0\leq k < \len{r'_1}$ such that $t\in\sR(\tau^{-k}r'_1)$ and, setting $l := \len{t} \bmod \tuberk$,
\[
\len{v'_1} =
\left\{ \begin{array}{ll}
           \tuberk \floor{\len{t}/\tuberk} + \len{r'_1} - k         & \text{if } 0 \leq l < \len{r'_1}, \\
           \tuberk \ceil{\len{t}/\tuberk}  + \len{r'_1} - k         & \text{if } \tuberk - \len{r'_1} < l < \tuberk.
\end{array} \right.
\]
\end{enumerate}
\end{lemma}

\begin{proof}
First note that in statement $(i)$, $\len{t}$ is not a multiple of $\tuberk$. Since there is no non-zero map $r'_1 \to t$, then 
$
t\in \sR(\tau r'_1) \cup \cdots \cup \sR(\tau^{\tuberk-\len{r'_1}} r'_1).
$
If $t\in\sR(\tau^i r'_1)$, then using the graphical calculus described in Section~\ref{sec:background}, we see that
\[
\len{v'_1} = \tuberk \lfloor \len{t}/\tuberk \rfloor + \len{r'_1} + i \leq \tuberk \lceil \len{t}/\tuberk \rceil
\]
since $\len{r'_1} +i \leq \tuberk$, as required.

For statement $(ii)$, since $\Hom(r'_1,t)\neq0$, the existence of the integer $k$ follows from Section~\ref{sec:background}. It is a straightforward exercise, using the graphical calculus, to then show the length (see Figure~\ref{fig:tube-Lemma}).
\end{proof}

Now we continue building $t_{\sY}$ by considering the indecomposable summands of $r$.

\begin{lemma}\label{lem:r-summands}
Let $s$ be an indecomposable summand of $r$, then \[\Hom(s, v'_2 \oplus \cdots \oplus v'_{m-1} \oplus v'_m \oplus v''_m)=0. \]
\end{lemma}

\begin{proof}
Since $\card{\ind ( \sY \cap\cT )} = \infty$, there exists some finite wing $\sW$ in $\cT$ such that $\ind (\sX \cap \cT) \subseteq \sW$. The summand $s$ must be an object in this wing. We also note that $\sW(r'_1)$ is a subwing of $\sW$. Consider the left-hand figure:

\noindent
\psfrag{W}{$\sW$}
\psfrag{W(r1')}{$\sW(r_1')$}
\psfrag{A}{$A_0$}
\psfrag{B}{$B_0$}
\psfrag{C}{$C_0$}
\psfrag{r}{$\scriptstyle r_1'$}
\scalebox{0.95}{\includegraphics[width=0.5\textwidth]{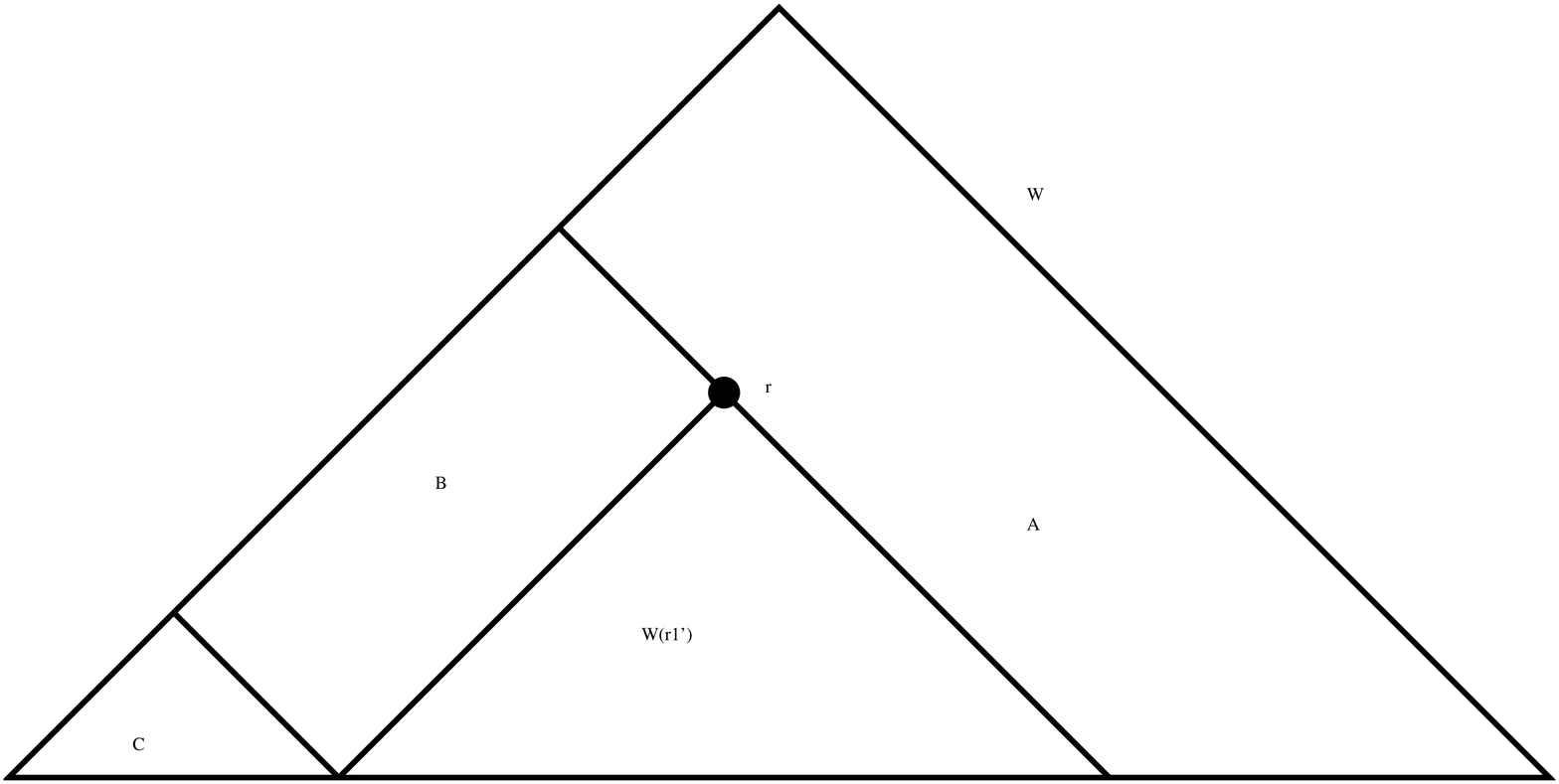}}
\hfill
\psfrag{W}{$\sW(r_i')$}
\psfrag{W(r1')}{$\!\!\sW(r_{i+1}')$}
\psfrag{A}{$A_i$}
\psfrag{B}{$B_i$}
\psfrag{C}{$C_i$}
\psfrag{r}{$\scriptstyle r_{i+1}'$}
\psfrag{rr}{$\scriptstyle r_i'$}
\scalebox{0.95}{\includegraphics[width=0.5\textwidth]{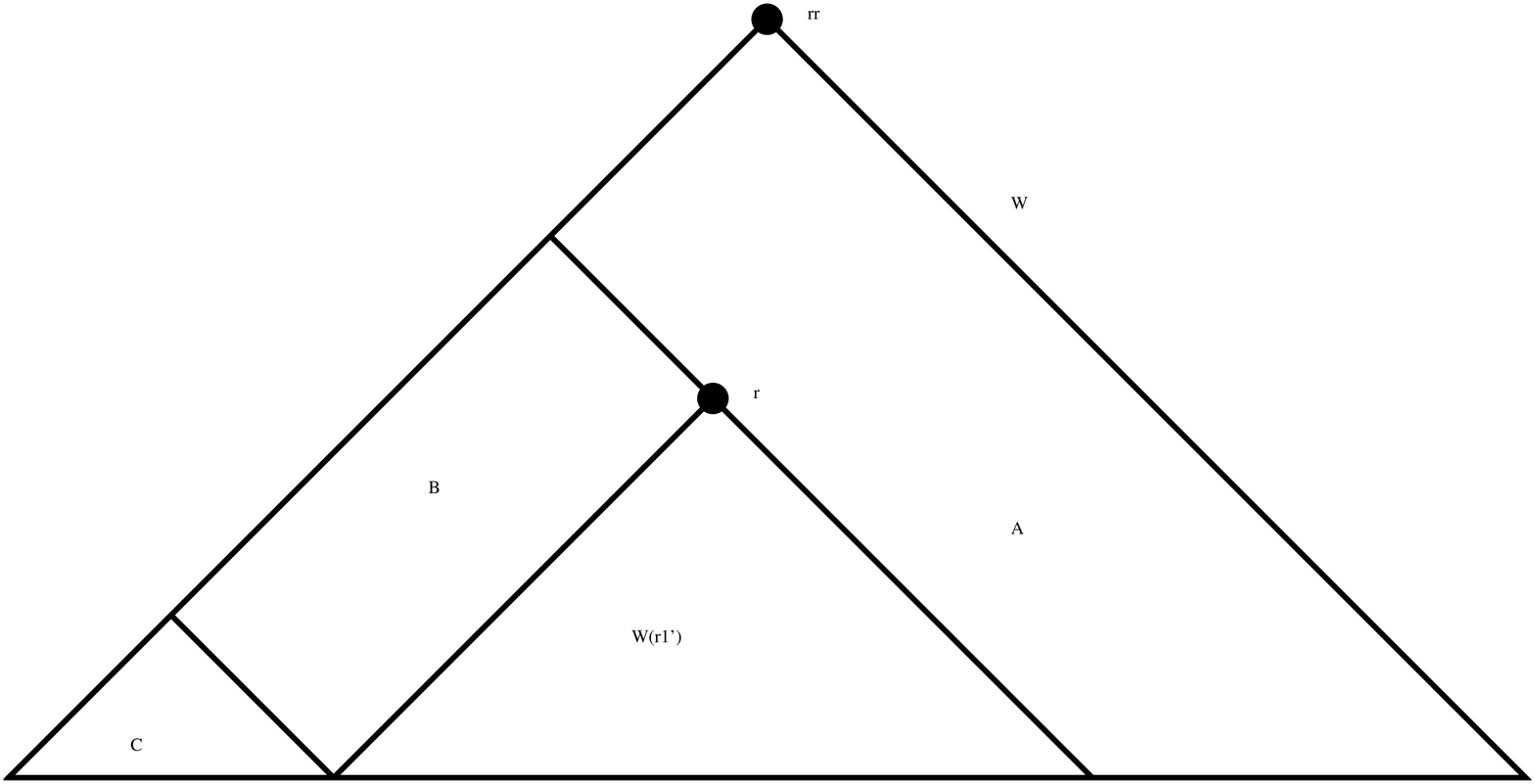}}

By the proof of Lemma~\ref{lem:summands-of-v}, $v'_2, \dots v'_m, v''_m \in \sW(r'_1)\backslash \sC(r'_1)$. Therefore, looking at the Hom-hammock, if $s$ is in the (closed) region marked $A_0$, we are done. Similarly, if $s \in C_0 \backslash (C_0 \cap B_0)$ the result holds. If $s$ were in $B_0 \backslash (\sR(r'_1) \cup \sC(r'_1))$ then by looking at the Ext-hammock of $s$ one can see that there would exist a non-zero morphism $\Sigma^{-1} r'_1 \to s$, but this cannot happen by Lemma~\ref{lem:nohom}. Therefore we have shown that the lemma holds for all possible $s$ outside $\sW(r'_1)$.

We proceed inductively: Suppose that the lemma holds for all possible $s$ outside $\sW(r'_i)$ for some $i \in \{1, \dots , m-1 \}$. 
First we observe (see right-hand figure above) that $\Hom(\sW(r'_i), v'_j) =0$ for $j=2, \dots , i$. If $s \in A_i \backslash \sR(r'_i)$ or $s \in C_i \backslash (B_i \cup \sR(r'_i))$, then in addition $\Hom(s, v'_{i+1})=0$ and $\Hom(s, \sW(r'_{i+1}))=0$. Since $v'_{i+2}, \dots, v'_m, v''_m \in \sW(r'_{i+1})\backslash \sC(r'_1)$, the result holds. For $B_i \backslash (\sR(r'_{i+1}) \cup \sC(r'_{i+1}))$ the same argument as above, replacing $r'_1$ by $r'_{i+1}$, shows that the summand $s$ doesn't lie in this region. Therefore in order to complete the inductive step we just need to consider $s \in \sR(r'_i) \cap \sW(r'_i)$. In this case, looking at the Hom-hammocks, we see that the morphism $s \to t$ must factor through a morphism $r'_i \to t$, which in turn must factor through $t_\sX$. This would contradict Lemma~\ref{lem:nohom} unless $s=r'_i$, which was a case already considered in region $A_{i-1}$. At the $m$-th step, the region $A_m$ is the whole wing $\sW(r'_m)$, and after this all possibilities for $s$ have been exhausted.  
\end{proof}

We observe that if $\Hom(a,c)=0$ then $ \Cone(a \to b \oplus c) = \Cone(a \to b) \oplus c$. Therefore, as we successively take cones from the summands of $r$ to build the truncation $y$, we see that $v'_2 \oplus \cdots \oplus v'_{m-1} \oplus v'_m \oplus v''_m$ remains a fixed summand at each step, and the remaining summands are obtained from $v'_1$ by successively taking cones from the summands of $r$. Since $v'_2, \dots , v'_{m-1} , v'_m , v''_m$ all have length less than $\tuberk$, they don't affect the bound we are trying to prove, so from now on we will only consider those summands obtained from $v'_1$. 

\begin{lemma} \label{lem:summands-of-w}
The object $w := \Cone( r \to v'_1)$ decomposes into a sum of direct summands, 
\[
w \cong w'_{p} \oplus \Sigma w''_1 \oplus \cdots \oplus \Sigma w''_{p}  ,
\]
where $w''_1, \dots, w''_{p}, w'_{p}\in\ind(\cT)$ and $p$ is the number of non-isomorphic indecomposable summands of $r$. The only summand whose length is possibly greater than $\tuberk$ is $w'_{p}$ which can be calculated inductively by $w'_j = \coker(r_j \to w'_{j-1})$, where $w'_0 := v'_1$.
\end{lemma}

\begin{proof}
The proof is similar to that of Lemma~\ref{lem:summands-of-v}. Suppose $r$ decomposes into indecomposables $r = r_1\oplus\cdots\oplus r_p$. It is clear from Lemma~\ref{lem:nohom} that each summand $r_i$ lies on a distinct ray $\sR(r_i)$ and let $s_i\in \IN$ be such that $\tau^{-s_i}t\in\sR(r_i)$. Without loss of generality we relabel the summands such that $s_1 > s_2 > \cdots > s_p$. The cone $w_1$ of the morphism $r_1 \to v'_1$ splits as the sum of the cokernel and the shifted kernel $w_1= w'_1 \oplus \Sigma w''_1$, where $w'_1 \in \sC(v'_1)$ with $\len{w'_1}<\len{v'_1}$ and $w''_1 \in \sR(r_1)$ with $\len{w''_1}<\len{r_1}$. Using the ordering of the $r_i$, we see that $\Ext^1(r_i,w''_1)=0$ for $i>1$.

As before we proceed inductively. Suppose $w_j:= \Cone(r_j \to w_{j-1})$ decomposes as $w_j= w'_j \oplus \Sigma w''_1 \oplus \cdots \oplus \Sigma w''_j $ such that $\Ext^1(r_i, w''_k)=0$ for all $k \leq j$ and $i>k$. Then, using the extension vanishing one can show that $w_{j+1}:= \Cone(r_{j+1} \to w_j)$ decomposes as $w_{j+1}= w'_{j+1} \oplus \Sigma w''_1 \oplus \cdots \oplus \Sigma w''_{j+1} $ where $w'_{j+1}$ and $w''_{j+1}$ are respectively the cokernel and kernel of the morphism $r_{j+1} \to w'_j$. Since $\sR(w''_{j+1}) = \sR(r_{j+1})$, one can see from the ordering of the $r_i$ that $\Ext^1(r_i, w''_{j+1})=0$ for all $i > j+1$. This completes the induction.
\end{proof}

\noindent
Finally, we show that the summand $w'_{p}$ satisfies the bound from Proposition~\ref{prop:length-bound}.

\begin{lemma}
Let $w'_{p}$ be the summand defined above. Then $\len{w'_{p}} \leq \tuberk \lCeil \dfrac{\len{t}}{\tuberk} \rCeil.$
\end{lemma}

\begin{proof}
From $w'_j = \coker(r_j \to w'_{j-1})$ we get $w'_j\in\sC(v'_1)$ and $\len{v'_1} \geq \len{w'_1} \geq \dots \geq \len{w'_{p}}$. If $\Hom(r'_1,t)=0$, then $\len{w'_{p}} \leq \len{v'_1} \leq \tuberk \ceil{\len{t}/\tuberk}$ by Lemma~\ref{lem:length-of-v}.

Any morphism $r'_1 \to t$ must factor through $t_\sX$ and therefore, through some summands of $r$. Let $r_i$ be one of these summands.
This must lie in the intersection of the $\sX$-wing (the smallest wing containing $\sX$), the Hom-hammock from $r'_1 $ and the Hom-hammock to $t$. There is a unique morphism from each $\tilde{r}$ in this region to $v'_1$, since $\sR(v'_1) = \sR(t)$ and $\len{\tilde{r}}<\tuberk$. As the morphisms going up the rays are monomorphisms, it is clear that the length of the cokernels of these morphisms decrease as $\tilde{r}$ goes up a ray. It is also straightforward to check that for $\tilde{r}$ in this region, the length of the cokernel only depends on the ray (where it lies on the coray affects the length of the kernel, not the cokernel). Therefore, the maximal length of $\coker(\tilde{r} \to v'_1)$ occurs when $\tilde{r} \in \sC(r'_1) \cap \sR(t)$; see $\tilde t$ in Figure~\ref{fig:tube-Lemma}.
In this case $\tilde{r}$ has length $\len{r'_1}-k $,  where $k$ is the integer introduced in Lemma~\ref{lem:length-of-v} and the morphism $\tilde{r} \to v'_1$ is a monomorphism. It follows that 
\[
\len{\coker(r_i \to v'_1)} \leq \len{\coker(\tilde{r} \to v'_1)} = \len{v'_1} - (\len{r'_1}-k) \leq \tuberk \ceil{\len{t}/\tuberk}
\] 
where the last inequality comes from Lemma~\ref{lem:length-of-v}.
Thinking in terms of the composition series, calculating the cokernels $w'_j = \coker(r_j \to w'_{j-1})$ involves successively removing some part from the bottom of the composition series, starting with the series for $v'_1$. If the composition $r_i \to v'_1  \to w'_{i-1}$ is zero, then the bottom part of the composition series of $v'_1$ into which $r_i$ maps has already gone from $w'_{i-1}$.  If some of it remains, so $r_i \to v'_1  \to w'_{i-1}$ is non-zero, then this part of the series is removed when we take the cokernel. In particular, $ \len{w'_i} \leq \len{\coker(r_i \to v'_1)}$.
\end{proof}

\noindent
This lemma completes the proof of Proposition~\ref{prop:length-bound}. 
\end{proof}

\subsection{Finiteness of the refined truncation sequence in a regular component}\label{sec:regular}

Recall from Section~\ref{sec:background} that the regular component consists of an infinite number of standard stable homogeneous tubes (i.e.\ rank one), and finitely many standard stable tubes of ranks larger than one.

\begin{proposition}\label{prop:regulartermination}
Let $t$ be an indecomposable object in a standard stable tube $\cT$ of rank $\tuberk>1$. Consider the sequence of morphisms obtained by applying the refined algorithm:
\[ t \too \refinedY{t}{1} \too \refinedY{t}{2} \too \refinedY{t}{3} \too \cdots, \]
then the projection onto $\cT$ yields a sequence
\begin{equation}\label{eq:main-sequence}
t \too \refinedY{t}{1}' \too \refinedY{t}{2}' \too \refinedY{t}{3}' \too \cdots,
\end{equation}
which stabilises after finitely many iterations of the refined algorithm, where $\refinedY{t}{i}'$ denotes the summand of $\refinedY{t}{i}$ lying in $\cT$ for each $i$. 
\end{proposition}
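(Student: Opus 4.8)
The plan is to track the indecomposable summands of $\refinedY{t}{i}$ that lie in $\cT$, using the length function $\len{\cdot}$ of the tube as a measure of progress, and to combine two ingredients: the length bound from Proposition~\ref{prop:length-bound}, which controls summands that arise from a left truncation with respect to a t-structure $(\sX_n,\sY_n)$ whose co-aisle meets $\cT$ in infinitely many indecomposables; and the fact that if $\card{\ind(\sY_n\cap\cT)}<\infty$ then the truncation with respect to $(\sX_n,\sY_n)$ can only push summands a bounded distance up the tube (indeed $(\sY_n)$ restricted to $\cT$ must stabilise along any chain). First I would separate the index set $I=\{0,\ldots,d-1\}$ into those $i$ with $\card{\ind(\sY_i\cap\cT)}=\infty$ (``large'' indices) and the rest (``small'' indices). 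For a large index, Setup~\ref{set:tube} applies, and the key consequence is that every indecomposable summand of any left truncation $(-)_{\sY_i}$ lying in $\cT$ has length at most $\tuberk\ceil{\len{(-)}/\tuberk}$; moreover, by uniseriality, once the input already lies in $\sY_i\cap\cT$, the truncation is the identity on it.

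The next step is to derive a uniform bound on the lengths of the $\cT$-summands appearing along the whole sequence \eqref{eq:main-sequence}. Starting from the indecomposable $t$ of length $\len{t}$, the initial truncation is with respect to $(\sX_0,\sY_0)$; whichever type index $0$ is, the $\cT$-part of $\refinedY{t}{0}$ has length at most $\tuberk\ceil{\len{t}/\tuberk}=:L$. I claim $L$ is preserved: applying a large-index truncation to an object of $\cT$-length $\le L$ produces $\cT$-summands of length at most $\tuberk\ceil{L/\tuberk}=L$ since $L$ is already a multiple of $\tuberk$; applying a small-index truncation cannot increase lengths beyond a bound depending only on the finitely many indecomposables of $\sX_i\cap\cT$ — and in fact, by combining with the partial order/monomorphism structure up the tube, the relevant summands still have length $\le L$ (enlarging $L$ at the outset by the finitely many lengths occurring in the $\sX_i\cap\cT$ if necessary). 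Thus all indecomposable summands of every $\refinedY{t}{n}$ that lie in $\cT$ have bounded length, hence lie in a \emph{finite} set $\cF$ of indecomposables of $\cT$.

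Finally, I would invoke the non-vanishing property built into the refined algorithm together with the existence of a partial order. Each map $\refinedY{t}{n}'\to\refinedY{t}{n+1}'$ factors the non-zero composite $t\to\refinedY{t}{n+1}'$ through $\refinedY{t}{n}'$, and by Lemma~\ref{lem:summands} (applied $d$ steps at a time, after aligning the index to $1\bmod d$) the summands persist: $\refinedY{t}{n+d}'$ is (up to the waste) a direct summand of the truncation of $\refinedY{t}{n}'$, so along the subsequence indexed by a fixed residue mod $d$ the multiset of $\cT$-summands can only ``move up'' the tube in the partial order. Since these summands are confined to the finite set $\cF$, such an increasing sequence must stabilise; doing this for each of the $d$ residues and reassembling shows that \eqref{eq:main-sequence} is eventually constant. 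The main obstacle I anticipate is the small-index case: one must argue carefully, using that a left truncation with respect to a t-structure whose co-aisle meets $\cT$ in only finitely many indecomposables cannot keep producing longer and longer $\cT$-summands, so that the length bound $L$ genuinely survives such steps; this is where the uniseriality of the tube and the partial-order/monomorphism bookkeeping of Section~\ref{sec:bound} do the real work.
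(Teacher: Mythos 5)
Your reduction to a uniform length bound is sound and close in spirit to the paper: for an index $n$ with $\card{\ind(\sY_n\cap\cT)}=\infty$ the bound $\tuberk\ceil{\len{t}/\tuberk}$ from Proposition~\ref{prop:length-bound} is indeed preserved under iteration (being a multiple of $\tuberk$), and for the other indices the $\cT$-summands of the left truncation lie in the finite set $\ind(\sY_n\cap\cT)$ and so are bounded for trivial reasons --- note it is $\sY_n\cap\cT$, not $\sX_n\cap\cT$, that does this work, so your bookkeeping there is slightly off but repairable. (The paper organises this differently: it splits into the case where some $\sY_i\cap\cT$ is finite, where only the subsequence indexed $i\bmod d$ is needed, and the case where all are infinite, where Proposition~\ref{prop:length-bound} applies throughout; your single uniform bound is a legitimate alternative packaging.)

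The genuine gap is in your concluding step. You argue that the $\cT$-summands ``can only move up the tube in the partial order'' and that an increasing sequence inside a finite set must stabilise. But the paper's partial order is defined only on the non-regular components; on a tube the relation ``there is a path of arrows in the AR quiver'' is not antisymmetric --- within any band of bounded length one can travel up a ray, down a coray, and cycle all the way around the tube --- so ``finitely many possible summands'' does not by itself preclude the sequence from changing forever. Moreover Lemma~\ref{lem:summands} does not say that summands of $\refinedY{t}{n}'$ persist into $\refinedY{t}{n+d}'$; it compares the algorithm with the algorithm restarted at $\refinedY{t}{k}$, and summands can disappear or be merged at later truncations. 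What actually closes the argument (and is the paper's mechanism) is a Harada--Sai-type statement: a composition of sufficiently many non-isomorphisms between indecomposables of the tube of length at most $L$ is zero. If the sequence $\refinedY{t}{n}'$ did not stabilise, one would obtain arbitrarily long chains of non-invertible maps through which the non-zero composites $t\to\refinedY{t}{n}'$ (guaranteed summand-wise by the refined algorithm, and factoring through the $\cT$-parts since no other component receiving maps from $t$ maps back into $\cT$) would have to factor, a contradiction. You have all the ingredients on the table --- bounded length and the non-vanishing property --- but the step that converts them into stabilisation is missing and cannot be replaced by a poset argument inside the tube.
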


\begin{proof}
Suppose there exists $i \in \{0,\ldots,d-1\}$ such that $\card{\ind (\sY_i \cap \cT)} < \infty$. 
We consider the subsequence
 $ t \to \refinedY{t}{i}' \to \refinedY{t}{i+d}' \to \refinedY{t}{i+2d}' \to \cdots $
of sequence \eqref{eq:main-sequence} of summands in $\cT$. By construction all the morphisms are non-zero. Thus, either the morphisms eventually become isomorphisms sufficiently far to the right, in which case we are done, or there is an arbitrarily long sequence of non-trivial morphisms. However, since each  $\refinedY{t}{i+kd}' \in \sY_i \cap \cT$ is made up of summands of bounded length, the composition of sufficiently many non-trivial morphisms is zero, a contradiction.

Now suppose $\card{\ind (\sY_i \cap \cT)} = \infty$ for all $i \in \{0,\ldots,d-1\}$ and consider the sequence \eqref{eq:main-sequence} of summands in $\cT$. In this case, each  $\refinedY{t}{n}'$ is made up of summands of length at most $\tuberk \ceil{\len{t}/\tuberk}$ by Proposition~\ref{prop:length-bound}, and the result follows using the same argument.
\end{proof}

\subsection{Proof of Theorem~\ref{thm:small:extended}}\label{sec:proof-small-extended}

\begin{proof}
$(b) \implies (a)$: Follows immediately from Theorem~\ref{thm:x_n}.

$(a) \implies (c)$: Suppose that $(\sX^I,\sY^I)$ is a t-structure. Let $\cN$ be a connected non-regular component of $\sT$ such that $\sY_i\cap\cN$ has an infinite strictly increasing sequence for all $i\in I$. We prove that for any indecomposable object $t \in \cN$ there is an object in $ \sY^I \cap \cN$ which is greater or equal to $t$ with respect to the partial order on $\cN$: this object is realised as a summand of the left truncation of $t$ with respect to the t-structure $(\sX^I,\sY^I)$. It immediately follows that there exists an infinite strictly increasing sequence in $ \sY^I \cap \cN$.

Consider the truncation triangle
$\tri{t_{\sX^I}}{t}{t_{\sY^I}}$
with respect to $(\sX^I,\sY^I)$. By Lemma~\ref{lem:nonregint}, $t_{\sX^I} \notin \cN$. Note that $t_{\sX^I}$ cannot have any summands lying in $\Sigma^{-1} \cN$, since $\sX^I$ is closed under suspension, and therefore if $t_{\sX^I}$ has summands lying in $\Sigma^{-1} \cN$, then $\sX^I \cap \cN$ is non-zero; a contradiction.  Thus, $x^I$ has summands lying in $\cR$, the unique regular components admitting morphisms to objects in $\cN$.
It follows that $t_{\sY^I}$ has a non-regular summand, since the full subcategory of regular objects is a triangulated subcategory of $\sT$, thus if $t_{\sY^I}$ were regular, then $t$ would also be regular; a contradiction. Now since $t$ admits non-trivial morphisms to each summand  to $t_{\sY^I}$, it follows that the non-regular summands of $t_{\sY^I}$ are strictly greater than $t$ with respect to the partial order on $\cN$.
This gives the desired conclusion.

$(c) \implies (b):$ Let $t \in \ind(\sT)$. Without loss of generality we may assume that it lies in the heart $\sH$ of some standard t-structure on $\sT$. Suppose that the sequence $(\refinedY{t}{n})$ does not terminate. Then there exists a subsequence 
 $ t \to t_0 \to t_1 \to t_2 \to \cdots $
of indecomposable objects such that all compositions of morphisms are non-zero and which does not terminate. 

\noindent\emph{Claim.} \emph{Only a finite piece of this sequence lies in any component $\cC$ of the AR quiver.}

For each integer $k \equiv -1 \bmod d$, we know from Lemma~\ref{lem:summands} that $t_{k+n}$ is a summand of $\refinedY{t_k}{n}$. If $t_k \in \cC$ then there exists $n_0 \in \IN$ such that for $n \geq n_0$ any summand of $\refinedY{t_k}{n}$ contained in $\cC$ lies in $\sY^I$; this is Lemma~\ref{lem:nonregulartermination} for $\cC$ is non-regular, and Proposition~\ref{prop:regulartermination} for a regular component $\cC$. Since elements in $\sY^I$ are fixed by the algorithm and by construction $(t_n)$ does not terminate, this implies $t_{k+n}\notin\cC$ for $n \geq n_0$; giving the claim.

There is an ordering of the components of $\sT$ with morphisms only going in one direction. Note that if $t_i\in\cR$ and $t_j\in\cR'$ for regular components with $j > i$, then $\cR'=\cR$ or $\cR'=\Sigma \cR$, because there are no non-zero morphisms between different regular components. Therefore, there exists $m_0 \in \IN$ such that $t_m \notin \sH \cup \Sigma \sH$ for $m \geq m_0$. Since $\sT$ is piecewise hereditary, we get $\Hom(t,t_m)=0$, contradicting our assumption about about non-zero compositions of morphisms in the sequence $(t_n)$. Hence, the sequence must terminate.
\end{proof}


\section{Group actions and equivariant categories} \label{sec:equivariant}

\noindent
Let $\sT$ be a $\kk$-linear category for an algebraically closed field $\kk$ of characteristic 0. We are interested in triangulated categories but the basic definitions work in greater generality.

Let $G$ be a finite group. An \emph{action of $G$ on $\sT$} is given by a collection of functors $g\colon\sT\isom\sT$, one for each group element (by abuse of notation the functor is denoted by the group element) such that the identity element is assigned the identity functor and there is a given functor isomorphism for each relation in $G$, satisfying the natural compatibility condition, see \cite{Deligne}. We point out that this does not just entail a group homomorphism $G\to\Aut(\sT)$. Rather, the action is given by a functor of groupoids $\underline G\to\underline{\Aut}(\sT)$.

Typical examples are given by the action of $G$ on an $\kk$-algebra $\Lambda$, in which case $G$ also acts on $\mod{\Lambda}$ and thus also on $\Db(\Lambda)$; similarly, an action of $G$ on a variety $X$ induces $G$-actions on $\coh(X)$ and $\Db(X)$ --- however here we are concerned with group actions which are (at least at first glance) not of this obvious type.

\subsection*{Linearised objects and the equivariant category}

We form a new category $\sT^G$ whose objects are $G$-objects, i.e.\ pairs $(t,\tau)$ where $t\in\sT$ is an object and $\tau_g\colon t\isom g(t)$ a collection of isomorphisms (called \emph{linearisations}) with the obvious compatibility relations. Given two pairs $(t,\tau)$ and $(t',\tau')$, the vector space $\Hom_\sT(t,t')$ becomes a $G$-representation in a natural way, using the linearisations and we set $\Hom_{\sT^G}((t,\tau),(t',\tau')):=\Hom_\sT(t,t')^G$.

It is a standard fact that $\sT^G$ is a category, which we call the \emph{$G$-equivariant category}. If $\sT$ is an abelian category, then so is $\sT^G$. If $\sT$ is an \emph{algebraic} triangulated category, i.e.\ is presented as the homotopy category of a dg category, then $\sT^G$ is triangulated as well \cite{Sosna}. An additional condition like this is needed because of the non-functoriality of cones in abstract triangulated categories.

\begin{example}
A $G$-action on a smooth variety $X$ induces a canonical equivalence of abelian categories, $\coh(X)^G=\coh([X/G])$ where $[X/G]$ is the quotient \emph{stack}. It is a smooth Deligne-Mumford stack, whereas the quotient variety $X/G$ has singularities at fixed points. For example, almost all `weighted projective lines' occur as quotient stacks $[C/G]$ where $C$ is a smooth projective curve and $G\subset\Aut(C)$ such that 
$C/G\cong\IP^1$.
\end{example}

\begin{remark} There is another natural categorical construction: the \emph{orbit category} $\sT/G$ has the same objects as $\sT$ and morphism spaces $\Hom_{\sT/G}(t,t'):=\bigoplus_{g\in G}\Hom_\sT(t,g(t'))$. In $\sT/G$, objects $t$ and $g(t)$ become isomorphic. 
If $\sT$ is a triangulated category with infinite direct sums, this can be extended to countable groups; for example, $\sT/\IZ[1]$ is known as the `orbit category' of $\sT$.
In contrast with equivariant categories, $\sT$ abelian or triangulated does in general \emph{not} descend to orbit categories.
\end{remark}

\subsection*{Application}

A t-structure $(\sX,\sY)$ is \emph{$G$-invariant} if $g\sX=\sX$ and $g\sY=\sY$ for all $g\in G$.

\begin{proposition} \label{prop:equivariant-t-structure}
Let $(\sX,\sY)$ be a $G$-invariant t-structure on $\sT$. Then the pair $(\sX^G,\sY^G)$ of $G$-equivariant categories forms a t-structure on $\sT^G$.
\end{proposition}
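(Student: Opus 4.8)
The plan is to verify the three axioms of a t-structure for the pair $(\sX^G, \sY^G)$ directly, leveraging the forgetful functor $\forget\colon\sT^G\to\sT$ and its left adjoint, the induction (averaging) functor $\inf\colon\sT\to\sT^G$ sending $t$ to $\bigoplus_{g\in G} g(t)$ with its canonical linearisation. First I would observe that $\sX^G$ and $\sY^G$ are by definition the full subcategories of $\sT^G$ consisting of $G$-objects $(t,\tau)$ with $t\in\sX$, resp.\ $t\in\sY$; since $G$-invariance of $(\sX,\sY)$ guarantees $g(t)\in\sX$ whenever $t\in\sX$, these subcategories are well-defined and closed under the suspension of $\sT^G$, which is computed on underlying objects. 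So axiom (i) is immediate.

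For axiom (ii), semi-orthogonality, I would take $(x,\xi)\in\sX^G$ and $(y,\upsilon)\in\sY^G$ and use that $\Hom_{\sT^G}((x,\xi),(y,\upsilon)) = \Hom_\sT(x,y)^G$, which is a subspace of $\Hom_\sT(x,y)=0$; hence it vanishes. The only subtlety worth a sentence is that the $G$-action on $\Hom_\sT(x,y)$ is the one induced by the linearisations, but this does not matter since the ambient space is already zero.

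The main work is axiom (iii), the existence of truncation triangles in $\sT^G$. Given a $G$-object $(t,\tau)$, I would start from the truncation triangle $\tri{t_{\sX}}{t}{t_{\sY}}$ of the underlying object $t$ in $\sT$. The key point is functoriality of truncation: since $(\sX,\sY)$ is $G$-invariant, for each $g\in G$ the triangle $g(t_{\sX})\to g(t)\to g(t_{\sY})\to\Sigma g(t_{\sX})$ is \emph{the} truncation triangle of $g(t)$, so the isomorphism $\tau_g\colon t\isom g(t)$ lifts uniquely to isomorphisms $(\tau_g)_{\sX}\colon t_{\sX}\isom g(t_{\sX})$ and $(\tau_g)_{\sY}\colon t_{\sY}\isom g(t_{\sY})$ compatible with the triangle morphisms. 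Uniqueness of these lifts (from uniqueness of truncation) forces the cocycle/compatibility conditions for $\tau$ to descend to the collections $\{(\tau_g)_{\sX}\}$ and $\{(\tau_g)_{\sY}\}$, so $(t_{\sX},\tau_{\sX})\in\sX^G$ and $(t_{\sY},\tau_{\sY})\in\sY^G$. Again by functoriality, the maps $t_{\sX}\to t$ and $t\to t_{\sY}$ are $G$-equivariant, i.e.\ morphisms in $\sT^G$, and the connecting map $t_{\sY}\to\Sigma t_{\sX}$ is likewise equivariant. Finally I would invoke the hypothesis that $\sT^G$ is triangulated (the algebraic assumption in the excerpt) to know this equivariant sequence is a distinguished triangle in $\sT^G$ — concretely, one checks it maps under $\forget$ to a distinguished triangle of $\sT$ and that $\forget$ detects triangles, or one builds the triangle in $\sT^G$ directly from the dg-enhancement. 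This produces the required truncation triangle, completing the verification.

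The step I expect to be the main obstacle is the descent of the linearisation data: showing that the unique lifts $(\tau_g)_{\sX}$ satisfy the full compatibility relations making $(t_{\sX},\tau_{\sX})$ a genuine $G$-object, and that the triangle $(t_{\sX},\tau_{\sX})\to(t,\tau)\to(t_{\sY},\tau_{\sY})\to\Sigma(t_{\sX},\tau_{\sX})$ is distinguished in $\sT^G$ rather than merely a candidate triangle. Both points are ultimately formal consequences of the uniqueness-and-functoriality of t-structure truncation together with the definition of the triangulated structure on an equivariant algebraic category, but they require a careful unwinding of the (co)cycle conditions; I would isolate this as a short lemma on $G$-equivariant truncation before assembling the proof.
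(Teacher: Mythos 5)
Your proposal is correct and follows essentially the same route as the paper: truncate the underlying object, use $G$-invariance plus uniqueness/functoriality of truncation to transport the linearisation $\tau$ to canonical linearisations on $t_{\sX}$ and $t_{\sY}$, and then use the triangulated structure on $\sT^G$ (from algebraicity) to recognise the resulting equivariant triangle as the truncation triangle, with orthogonality and closure under suspension being immediate. The induction functor you mention at the outset plays no role in either argument, and the "main obstacle" you isolate is exactly the point the paper settles by the uniqueness of truncations.
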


\begin{proof}
Note that the group action on $\sT$ induces actions on $\sX$ and $\sY$. The equivariant categories $\sX^G$ and $\sY^G$ are full, additive subcategories of $\sT^G$. (Note that here $\sX^G$ and $\sY^G$ are subcategories of $\sT^G$, not of $\sT$.) We proceed to check the axioms for a t-structure:
Orthogonality $\Hom_{\sT^G}(\sX^G,\sY^G)=0$ follows immediately from $\Hom_\sT(\sX,\sY)=0$. The same is true for invariance under (de)suspension.

As to the truncation triangles, let $(t,\tau)\in\sT^G$. There is a truncation triangle $x\to t\to y$ in $\sT$ with respect to $(\sX,\sY)$. Applying an autoequivalence $g$ yields another truncation triangle $g(x)\to g(t)\to g(y)$, with respect to $(g\sX,g\sY)$, i.e.\ again for $(\sX,\sY)$, as the t-structure is $G$-invariant. Furthermore, we have $\tau_g\colon t\isom g(t)$, and therefore both triangles are truncations of $t\cong g(t)$ with respect to $(\sX,\sY)$. Uniqueness of truncations enforces $x\cong g(x)$ and $y\cong g(y)$. More precisely, the truncation $x\to t\xxrightarrow{\tau} g(t)$ and the morphism $g(x)\to g(t)$ induce $x\to g(x)$, which then must be an isomorphism. In this way, we get canonical isomorphisms $\xi_g\colon x\isom g(x)$ for all $g\in G$ (and similarly $\mu_g\colon y\isom g(y)$). This equips $x$ and $y$ with $G$-linearisations, and the triangle $(x,\xi)\to(t,\tau)\to(y,\mu)$ is exact in $\sT^G$ and then a truncation triangle with respect to $(\sX^G,\sY^G)$.
\end{proof}

\begin{example}
Let $\sT:=\Db(\kk A_5)$ be the bounded derived category of the path algebra of the $A_5$ quiver. Consider the t-structure pictured in Figure~\ref{fig:A5_t-structure}. Furthermore, consider the autoequivalence $g$ induced by the horizontal reflection of the AR quiver of $\sT$, so that $G:=\langle g\rangle \cong \IZ/2\IZ$ acts on $\sT$. As this is the representation finite case, averaging is possible by Proposition~\ref{prop:repfin}, and Figure~\ref{fig:A5_t-structure} also shows the invariant t-structure on $\sT$. Finally, this invariant t-structure on $\sT$ induces a t-structure on $\sT^G$. It is well-known that $\sT^G\cong\Db(\kk D_4)$, and the t-structure is presented in Figure~\ref{fig:D4_induced}, as well.
\end{example}

\begin{figure}
\centering
\includegraphics[width=1.0\textwidth]{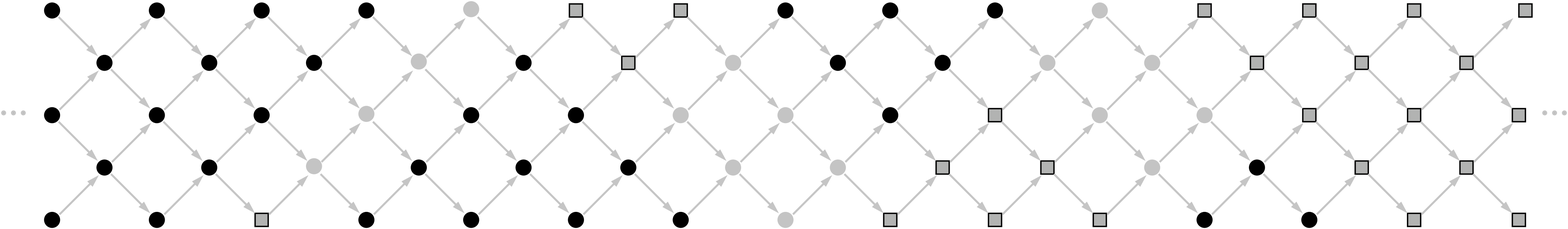}
\includegraphics[width=1.0\textwidth]{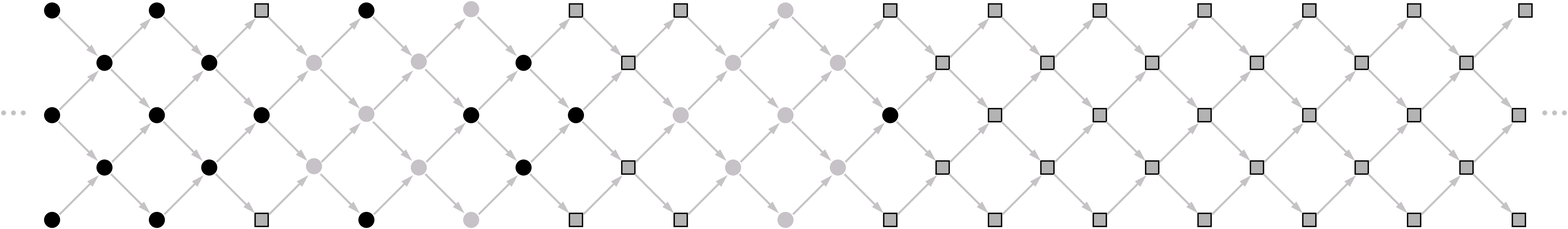}   

\bigskip
\includegraphics[width=1.0\textwidth]{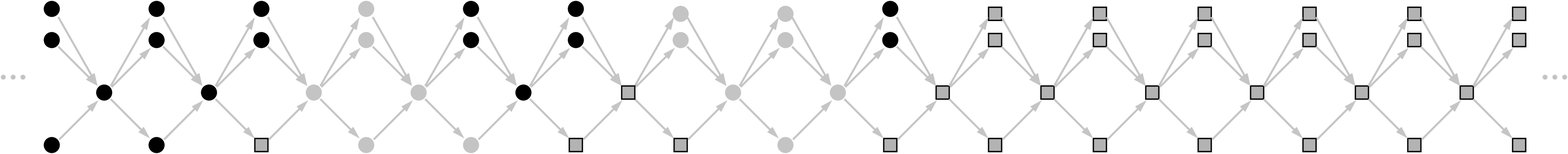}
\caption[A t-structure on $\sT=\Db(\kk
  A_5))$.]{ \label{fig:A5_t-structure} \label{fig:D4_induced} \small 
   Top: A t-structure on $\sT=\Db(\kk A_5)$ given by $\sX$ (\dotspace\boxeddot) and $\sY$ (\dotspace\blackdot).
   Middle: The averaged t-structure for the $\IZ/2\IZ$-action on $\sT$. \newline
   Bottom: The induced t-structure $(\sX^G,\sY^G)$ on $\sT^G\cong\Db(\kk D_4)$.
}
\end{figure}

\subsection*{Preservation of properties for averaged t-structures} We close with a comment about properties of t-structures and their preservation under averaging. A t-structure $(\sX,\sY)$ in $\sT$ is called
\begin{enumerate}
\item \emph{stable} if $\Sigma \sX = \sX$ (and hence $\Sigma \sY = \sY$),
\item \emph{bounded} if $\bigcup_{i\in\IZ} \Sigma^i \sX = \bigcup_{i\in\IZ} \Sigma^i \sY = \sT$,
\item \emph{non-degenerate} if $\bigcap_{i\in\IZ} \Sigma^i \sX = \bigcap_{i\in\IZ} \Sigma^i \sY = \{0\}$.
\end{enumerate}
A stable t-structure $(\sX,\sY)$ is often called a `semi-orthogonal decomposition' of $\sT$ into triangulated subcategories $\sY$, $\sX$, and denoted $\langle\sY,\sX\rangle$. The following statement follows from unravelling the definitions.

\begin{proposition}
Let $(\sX_i,\sY_i)_{i\in I}$ be a finite set of t-structure on a triangulated category $\sT$ and assume that $(\sX^I,\sY^I)$ is a t-structure. If $(\sX_i,\sY_i)$ is stable (or bounded) for all $i\in I$, then so is $(\sX^I,\sY^I)$.
\end{proposition}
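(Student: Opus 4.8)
The plan is to unravel the descriptions $\sX^I=\sod{\sX_i\mid i\in I}$ and $\sY^I=\bigcap_{i\in I}\sY_i$ from the Setup and check the relevant equalities of subcategories by hand; the hypothesis that $(\sX^I,\sY^I)$ is a t-structure is used only so that the adjectives ``stable'' and ``bounded'' make sense for this pair. Throughout I will use that each $\Sigma^n\colon\sT\to\sT$ ($n\in\IZ$) is a triangulated autoequivalence, hence preserves extensions and direct summands and so satisfies $\Sigma^n\sod{\cC}=\sod{\Sigma^n\cC}$ for any $\cC\subseteq\sT$, together with the fact that for every t-structure one has $\Sigma\sX\subseteq\sX$ and $\Sigma^{-1}\sY\subseteq\sY$.

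For the stable case I would argue directly: if $\Sigma\sX_i=\sX_i$ for all $i\in I$, then, applying the identity above to $\cC=\bigcup_{i\in I}\sX_i$, we get $\Sigma^{-1}\sX^I=\sod{\Sigma^{-1}\sX_i\mid i\in I}=\sod{\sX_i\mid i\in I}=\sX^I$, and applying $\Sigma$ gives $\Sigma\sX^I=\sX^I$. (Equivalently, one may work on the co-aisle side: $\Sigma\sY^I=\bigcap_{i\in I}\Sigma\sY_i=\bigcap_{i\in I}\sY_i=\sY^I$.) Either statement is exactly the stability of $(\sX^I,\sY^I)$.

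For the bounded case, the aisle half is immediate: fixing any $i_0\in I$, the inclusion $\sX^I\supseteq\sX_{i_0}$ forces $\bigcup_{n\in\IZ}\Sigma^n\sX^I\supseteq\bigcup_{n\in\IZ}\Sigma^n\sX_{i_0}=\sT$ by boundedness of $(\sX_{i_0},\sY_{i_0})$. The co-aisle half is the one point that needs an actual idea: given $t\in\sT$, boundedness of each $(\sX_i,\sY_i)$ supplies an integer $m_i$ with $\Sigma^{-m_i}t\in\sY_i$; set $N:=\max_{i\in I}m_i$, which exists because $I$ is finite, and use the descending inclusions $\Sigma^{-1}\sY_i\subseteq\sY_i$ to conclude $\Sigma^{-N}t\in\sY_i$ for every $i\in I$, i.e. $\Sigma^{-N}t\in\sY^I$ and so $t\in\Sigma^N\sY^I$. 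Hence $\bigcup_{n\in\IZ}\Sigma^n\sY^I=\sT$, completing the verification that $(\sX^I,\sY^I)$ is bounded.

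The proof is thus essentially formal, and the only step worth flagging as the ``hard'' part is producing the uniform shift $N$ in the co-aisle half of the bounded case, where the finiteness of $I$ and the inclusions $\Sigma^{-1}\sY_i\subseteq\sY_i$ are genuinely used; every other step is a direct rewriting of the definitions of $\sX^I$ and $\sY^I$.
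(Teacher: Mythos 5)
Your proof is correct and takes the same route the paper intends: the paper offers no written argument beyond the remark that the statement ``follows from unravelling the definitions,'' which is precisely what you do. The stable case and the aisle half of boundedness are formal, and your uniform shift $N=\max_i m_i$ together with $\Sigma^{-1}\sY_i\subseteq\sY_i$ correctly handles the only step requiring care, the co-aisle half of boundedness for $\sY^I=\bigcap_{i\in I}\sY_i$.
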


\begin{corollary}
Let $G$ be a finite group acting on an algebraic triangulated category $\sT$. Assume that $(\sX,\sY)$ is a t-structure on $\sT$ such that the average $(\sX^G,\sY^G)$ is again a t-structure on $\sT$. If $(\sX,\sY)$ is stable (or non-degenerate, or bounded), then the same is true for the induced equivariant t-structure on $\sT^G$.
\end{corollary}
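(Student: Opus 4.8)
The plan is to reduce the statement, via the forgetful functor $\forget\colon\sT^G\to\sT$, to the corresponding statement for the averaged t-structure $(\sX^G,\sY^G)$ regarded as a t-structure on $\sT$ itself, and then to deduce that statement from the Proposition immediately preceding the Corollary together with two short computations. Observe first that $(\sX^G,\sY^G)$ is $G$-invariant: for $g\in G$ we have $g\sX^G=g\sod{h\sX\mid h\in G}=\sod{gh\sX\mid h\in G}=\sX^G$, since an auto-equivalence commutes with $\sod{-}$, and likewise $g\sY^G=g\bigcap_h h\sY=\bigcap_h gh\sY=\sY^G$. Hence Proposition~\ref{prop:equivariant-t-structure} applies to $(\sX^G,\sY^G)$ and produces the induced equivariant t-structure on $\sT^G$, whose aisle consists of the linearised objects $(t,\tau)$ with $t\in\sX^G$, i.e.\ is $\forget^{-1}(\sX^G)$, and whose co-aisle is $\forget^{-1}(\sY^G)$.

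For the reduction I would use that $\forget$ is exact, commutes with suspension and detects the zero object — and, crucially, that the $\Sigma$-intersections and $\Sigma$-unions of $\sX^G$ and $\sY^G$ entering the definitions of stable/non-degenerate/bounded are again $G$-invariant, so their (non)vanishing or (non)fullness is already witnessed by linearisable objects such as $\bigoplus_{g\in G}g(x)$. It follows that the $n$-th suspension of the equivariant aisle (resp.\ co-aisle) has underlying subcategory $\Sigma^n\sX^G$ (resp.\ $\Sigma^n\sY^G$), and hence that the induced equivariant t-structure on $\sT^G$ is stable (resp.\ non-degenerate, resp.\ bounded) if and only if $(\sX^G,\sY^G)$ has that property as a t-structure on $\sT$. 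So it remains to transfer each property from $(\sX,\sY)$ to the average $(\sX^G,\sY^G)=\bigl(\sod{g\sX\mid g\in G},\,\bigcap_{g\in G}g\sY\bigr)$.

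For \emph{stable} and \emph{bounded} this is quick: every $(g\sX,g\sY)$ inherits the property from $(\sX,\sY)$ because $g$ is an auto-equivalence and so $\Sigma^n(g\sX)=g(\Sigma^n\sX)$; applying the preceding Proposition to the family $(g\sX,g\sY)_{g\in G}$ — whose average is a t-structure by hypothesis — then gives the property for $(\sX^G,\sY^G)$. For \emph{non-degenerate} the co-aisle condition is immediate, since intersections commute and $g$ commutes with $\Sigma$:
\[ \bigcap_{n\in\IZ}\Sigma^n\sY^G=\bigcap_{g\in G}g\Bigl(\bigcap_{n\in\IZ}\Sigma^n\sY\Bigr)=0. \]
For the aisle condition I would combine the t-structure identity $\sX^G={}^\perp\sY^G$, which gives $\bigcap_n\Sigma^n\sX^G={}^\perp\bigl(\bigcup_n\Sigma^n\sY^G\bigr)$, with the identity $\bigcup_n\Sigma^n\sY^G=\bigcap_{g\in G}g\bigl(\bigcup_n\Sigma^n\sY\bigr)$; here the non-formal inclusion uses the finiteness of $G$, by picking, for $t$ in the right-hand side, integers $m_g$ with $\Sigma^{-m_g}t\in g\sY$ and setting $M=\max_{g}m_g$, so that $\Sigma^{-M}t\in\bigcap_g g\sY$. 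One is then reduced to checking that the left orthogonal of $\bigcap_{g\in G}g\bigl(\bigcup_n\Sigma^n\sY\bigr)$ vanishes, knowing that $\bigcup_n\Sigma^n\sY$ is a thick subcategory with zero left orthogonal (this last statement being exactly $\bigcap_n\Sigma^n\sX=0$, one half of the non-degeneracy of $(\sX,\sY)$) and that only finitely many auto-equivalences intervene.

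The main obstacle is precisely this last point — the aisle half of non-degeneracy — because, unlike the intersection on the co-aisle side, the closure operation $\sod{-}$ does not commute with $\bigcap_n\Sigma^n$, so the argument there cannot be purely formal. By contrast, the stable and bounded cases are settled at once by the preceding Proposition and the transfer along $\forget$; in particular the stable case records that a $G$-invariant semi-orthogonal decomposition of $\sT$ induces one of $\sT^G$.
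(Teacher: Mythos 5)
Your handling of the stable and bounded cases is correct and is essentially the paper's (implicit) argument: each $(g\sX,g\sY)$ inherits the property because $g$ is an autoequivalence commuting with $\Sigma$, the Proposition preceding the Corollary transfers it to the average $(\sX^G,\sY^G)$, which is manifestly $G$-invariant, and Proposition~\ref{prop:equivariant-t-structure} then yields the induced t-structure on $\sT^G$, whose aisle and co-aisle consist of the linearised objects with underlying object in $\sX^G$, resp.\ $\sY^G$; unravelling definitions via $\forget$ (which commutes with $\Sigma$ and detects zero) gives the property upstairs. Your further observation that properties of the equivariant t-structure are detected back on $\sT$ by induced objects $\bigoplus_{g\in G}g(x)$ with their permutation linearisation is also correct, though not needed for these two cases.

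The non-degenerate clause is where your proof stops, and the obstacle you name is genuine: the aisle half requires $\bigcap_{n}\Sigma^n\sod{g\sX\mid g\in G}=0$, and since $\sod{-}$ does not commute with $\bigcap_n\Sigma^n$ (equivalently, the left orthogonal of $\bigcap_{g}g\bigl(\bigcup_n\Sigma^n\sY\bigr)$ is not controlled by the single orthogonals), no formal argument is available. You should know, however, that the paper supplies no argument here either: the preceding Proposition deliberately omits non-degeneracy, and the remark immediately following the Corollary concedes that it is unclear whether non-degeneracy survives averaging, reformulating the missing step exactly as you do (can the extension closure of aisles contain a nonzero triangulated subcategory when the individual aisles do not?). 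Indeed, your own reduction shows that the clause, read literally with the hypothesis on $(\sX,\sY)$, is equivalent to that open question: if $0\neq s\in\bigcap_n\Sigma^n\sX^G$, then $\bigoplus_{g}g(s)$ with its permutation linearisation is a nonzero object of $\bigcap_n\Sigma^n$ of the equivariant aisle, so non-degeneracy on $\sT^G$ forces non-degeneracy of the average on $\sT$. The only reading under which the non-degenerate case is actually established is the weaker one — that non-degeneracy of the $G$-invariant t-structure being lifted (here the average) passes to $\sT^G$ — and that much your transfer-along-$\forget$ step already proves. In short, your proposal covers exactly what the paper's proof covers; the gap you flag is the paper's acknowledged gap, not a failure of your approach.
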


In contrast, it is unclear whether non-degeneracy is preserved. This problem boils down to the following question: can the extension closure of two aisles contain a triangulated subcategory, assuming that neither of the aisles contains triangulated subcategories?


\addtocontents{toc}{\protect{\setcounter{tocdepth}{-1}}}

\bigskip
\noindent
\begin{tabular}{@{} l p{0.5\textwidth}}
Email addresses: & \verb+broomhead@math.uni-hannover.de+ 
                   \verb+pauk@math.uni-hannover.de+ 
                   \verb+ploog@math.uni-hannover.de+
\end{tabular}

\medskip \noindent
Web page of David Pauksztello: \verb+http://www.iazd.uni-hannover.de/~pauksztello+

\end{document}